\newtheorem{thmintro}{Theorem}
\theoremstyle{definition}
\newtheorem{rkintro}{Remark}
\theoremstyle{plain}
\newtheorem{theorem}{Theorem}[section]
\newtheorem{proposition}[theorem]{Proposition}
\newtheorem{lemma}[theorem]{Lemma}
\newtheorem{corollary}[theorem]{Corollary}
\theoremstyle{definition}
\newtheorem{definition}[theorem]{Definition}
\newtheorem{convention}[theorem]{Convention}
\newtheorem{example}[theorem]{Example}
\theoremstyle{remark}
\newtheorem{remark}[theorem]{Remark}
\renewcommand{\phi}{\varphi}
\renewcommand{\epsilon}{\varepsilon}
\newcommand{\NN}{\mathbb{N}}
\newcommand{\ZZ}{\mathbb{Z}}
\newcommand{\FF}{\mathbb{F}}
\newcommand{\KK}{\mathbb{K}}
\newcommand{\Cp}{\mathcal{C}_{\epsilon}}
\newcommand{\Cm}{\mathcal{C}_{-\epsilon}}
\newcommand{\C}{\mathcal{C}}
\renewcommand{\P}{\mathcal{P}}
\DeclareMathOperator{\Aut}{Aut}
\DeclareMathOperator{\proj}{proj}
\numberwithin{equation}{section} 
\begin{document}

\title{RGD-systems of type $(4, 4, 4)$ over $\FF_2$ and tree products}

\author{Sebastian Bischof}

\thanks{email: sebastian.bischof@math.uni-paderborn.de}

\thanks{Mathematisches Institut, Arndtstra\ss e 2, 35392 Gie\ss en, Germany}

\thanks{Keywords: Groups of Kac-Moody type, Kac-Moody groups, Tree products, Twin buildings}

\thanks{Mathematics Subject Classification 2020: 20E42, 20E08}

\begin{abstract}
	In this paper we prove that the group $U_+$ of an RGD-system of type $(4, 4, 4)$ over $\FF_2$ contains a certain tree product as a subgroup. The proof relies on a careful analysis of the action on the associated twin building. This result is part of a larger project and we will use it as an induction start to construct uncountably many RGD-systems of type $(4, 4, 4)$ over $\FF_2$.
\end{abstract}

\maketitle

\section{Introduction}

In \cite{Ti92} RGD-systems were introduced by Tits in order to study groups of Kac-Moody type. Any RGD-system has a \emph{type} which is given by a Coxeter system, and to each Coxeter system one can associate a set of \emph{roots} $\Phi$ (viewed as half-spaces). Let $(W, S)$ be a Coxeter system and let $\Phi$ be its associated set of roots. An \emph{RGD-system of type $(W, S)$} is a pair $\mathcal{D} = (G, (U_{\alpha})_{\alpha \in \Phi})$, where $G$ is a group and $(U_{\alpha})_{\alpha \in \Phi}$ is a family of subgroups (called \emph{root groups}) indexed by the set of roots $\Phi$, satisfying some axioms. An RGD-system is said to be \emph{over $\FF_2$} if all root groups have cardinality $2$.

One axiom of RGD-systems is a commutation relation between root groups corresponding to \emph{prenilpotent pairs of roots}, i.e.\ pairs of roots $\{ \alpha, \beta \} \subseteq \Phi$ satisfying $\alpha \cap \beta \neq \emptyset \neq (-\alpha) \cap (-\beta)$ where $-\alpha$ denotes the root opposite to $\alpha$. It is a basic fact that for any prenilpotent pair $\{ \alpha, \beta \}$ the group $\langle U_{\alpha}, U_{\beta} \rangle$ is nilpotent provided that $U_{\alpha}$ and $U_{\beta}$ are nilpotent (which is the case for RGD-systems over $\FF_2$). More general, if $\Psi \subseteq \Phi$ is a \emph{nilpotent set of roots}, i.e.\ $\bigcap_{\alpha \in \Psi} \alpha \neq \emptyset \neq \bigcap_{\alpha \in \Psi} (-\alpha)$, then the subgroup $\langle U_{\alpha} \mid \alpha \in \Psi \rangle$ is nilpotent provided that the root groups are nilpotent. Important examples of such subgroups of $G$ are the groups
\[ U_w := \langle U_{\alpha} \mid \alpha \in \Phi,1_W \in \alpha, w\notin \alpha \rangle \]
for $w\in W$. Let $\Phi_+ := \{ \alpha \in \Phi \mid 1_W \in \alpha \}$ be the set of \emph{positive} roots. Then the structure of $U_+ := \langle U_{\alpha} \mid \alpha \in \Phi_+ \rangle$ is known: According to a result of Tits \cite{Ti86} it is the direct limit of the inductive system formed by the groups $U_w$ together with the natural inclusions $U_w \to U_{ws}$ if $\ell(ws) = \ell(w) +1$. Thus a presentation of the group $U_+$ is given by the generating set $\{ U_{\alpha} \mid \alpha \in \Phi_+ \}$ and all the relations are given by the relations inside the root groups $U_{\alpha}$ and by the commutation relations between root groups corresponding to prenilpotent pairs of roots. In particular, we do not see a relation between root groups corresponding to a pair of roots $\{ \alpha, \beta \}$ which is not prenilpotent. In fact, for such a pair the group generated by $U_{\alpha}$ and $U_{\beta}$ is isomorphic to their free product, i.e.\ $\langle U_{\alpha}, U_{\beta} \rangle \cong U_{\alpha} \star U_{\beta}$. 

Suppose $(W, S)$ is \emph{of type $(4, 4, 4)$}, that is, $(W, S)$ is of rank $3$ and $o(st) = 4$ for all $s \neq t \in S$, and let $(G, (U_{\alpha})_{\alpha \in \Phi})$ be an RGD-system of type $(W, S)$ over $\FF_2$ (e.g.\ the split Kac-Moody group of type $(4, 4, 4)$ over $\FF_2$). We have already mentioned that we know the structure of the group $U_+ = \langle U_{\alpha} \mid \alpha \in \Phi_+ \rangle$. In this paper we will determine the structure of the group $\langle U_{\alpha} \mid \alpha \in \Psi \rangle$ for the non-nilpotent set $\Psi = \{ \alpha \in \Phi_+ \mid \{ sr, trt \} \not\subseteq \alpha \}$, where $S = \{ r, s, t \}$ (for more information about why we consider this specific $\Psi$ we refer to Remark \ref{rkintro: Choice of Psi}). We define
\[ V_{\{s, t\}} := \langle U_s, U_t \rangle. \]
We denote by $U_{sr} \star_{U_s} V_{\{s, t\}} \star_{U_t} U_{trt}$ the \emph{tree product} of the following tree of groups: the tree is a sequence of length $2$, the vertex groups are $U_{sr}, V_{\{s, t\}}, U_{trt}$ and the edge groups are $U_s, U_t$. By definition we have a canonical homomorphism from $U_{sr} \star_{U_s} V_{\{s, t\}} \star_{U_t} U_{trt}$ to $G$. We prove the following main result (cf.\ Theorem \ref{theoremsubgroupKM}):

\begin{thmintro}\label{Main theorem: V_Rs to G injective}
	Let $(W, S)$ be of type $(4, 4, 4)$ and let $(G, (U_{\alpha})_{\alpha \in \Phi})$ be an RGD-system of type $(W, S)$ over $\FF_2$. Then the canonical homomorphism $U_{sr} \star_{U_s} V_{\{s, t\}} \star_{U_t} U_{trt} \to G$ is injective.
\end{thmintro}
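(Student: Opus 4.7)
My plan is to prove injectivity via the normal-form theorem for tree products (a Bass-Serre theoretic argument) by exploiting the action of $G$ on the twin building $\Delta = (\Delta_+, \Delta_-)$ associated with $\mathcal{D}$. Injectivity of the canonical map is equivalent to the conjunction of two statements inside $G$: the three vertex groups $U_{sr}$, $V_{\{s,t\}}$, $U_{trt}$ intersect pairwise as prescribed by the amalgam, and every reduced word in these vertex groups along the underlying path represents a non-trivial element of $G$.

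First I would pin down the structure of the three vertex groups inside $G$. Every rank-$2$ standard parabolic of a Coxeter system of type $(4,4,4)$ is finite of type $B_2$, so each of $U_{sr}$, $V_{\{s,t\}}$, $U_{trt}$ lives inside the unipotent radical of such a finite parabolic. Combining this with the Tits direct-limit description of $U_+$ recalled in the introduction, one obtains the usual parametrisation of elements of $U_w$ as ordered products of elements of the positive root groups indexed by a minimal gallery from $1_W$ to $w$. From this unique factorisation one reads off that $U_{sr} \cap V_{\{s,t\}} = U_s$ and $V_{\{s,t\}} \cap U_{trt} = U_t$ inside $G$, since $\alpha_s$ (respectively $\alpha_t$) is the only positive root shared by the relevant pairs of generating sets. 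This supplies the amalgamation data required by the normal-form theorem.

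The heart of the proof is then a ping-pong argument on $\Delta$. I would select reference chambers (or small reference residues) adapted to the three vertex groups so that each vertex group fixes its own reference chamber while moving the other two in a controlled way, and so that the edge groups $U_s$, $U_t$ fix precisely the appropriate common data. Given a reduced word $g_1 g_2 \cdots g_n$ in the tree product, one tracks $g_1 \cdots g_n \cdot c$ for a suitably chosen starting chamber $c$ and shows inductively that each syllable $g_i$ forces the image to cross at least one wall which the subsequent syllables cannot re-cross, so that the product is non-trivial in $G$.

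The main technical obstacle is that the Coxeter complex underlying $\Delta$ is a two-dimensional hyperbolic tessellation rather than a tree, so the ping-pong cannot be played on the full set of chambers; instead one must work with cleverly chosen subcomplexes, most naturally the residues of type $\{s,t\}$ together with the $r$-walls bounding them. The delicate point will be controlling how elements of $V_{\{s,t\}}$ outside $U_s \cup U_t$ permute the chambers in the $r$-panels attached to the standard apartment, since these chambers separate the regions stabilised by $U_{sr}$ from those stabilised by $U_{trt}$. Once this combinatorial bookkeeping is set up, the remainder of the argument reduces to finite computations inside the rank-$2$ parabolics of type $B_2$.
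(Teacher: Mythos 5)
Your strategy is exactly the one the paper follows: reduce to showing that reduced words in the tree product act non-trivially, then run an action argument on the twin building. Concretely, the paper uses the action of $U_+$ on the negative half $\Delta_-$, with $c := c_-$ as the reference chamber and $R := R_{\{s,t\}}(c)$ as the reference residue, and proves by induction on the number of syllables that $\ell\bigl(c\cdot g, \proj_{R}(c\cdot g)\bigr) > 0$ (together with auxiliary data about where the projection lands and its behaviour under prepending $srs$), so $g$ cannot be the identity in $G$. Your instinct that residues of type $\{s,t\}$ together with the adjacent $r$-panels are the right objects, and that the delicate point is how elements of $V_{\{s,t\}}$ permute chambers near $c_-$, is exactly right.

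The genuine gap is that the two things you defer --- the ``delicate point'' about how $V_{\{s,t\}}$ moves chambers in neighbouring $r$-panels, and the ``combinatorial bookkeeping'' that each syllable crosses a wall the later ones cannot undo --- are essentially the entire proof. In the paper these are Lemma~\ref{Lemma: Uplus acts on Deltaminus} (a case-by-case inventory of distances and $s$-adjacencies produced by the eight elements of $V_{r_{\{s,t\}}}$ acting on the chambers and $t$-panels near $c$) and Lemma~\ref{Lemma: Normal form} (a long multi-case induction, organized by the types of consecutive syllables $g_{n-1}, g_n$, establishing the strict growth of the projection distance), after which Theorem~\ref{theoremsubgroupKM} follows by a short normal-form reduction. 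Your proposal correctly predicts the shape of these lemmas but gives no argument for them, and since passing from a two-dimensional hyperbolic Coxeter complex to an honest ping-pong is precisely where the problem is hard, the proposal as written does not yet constitute a proof. (A smaller remark: the separate verification that $U_{sr}\cap V_{\{s,t\}} = U_s$ and $V_{\{s,t\}}\cap U_{trt} = U_t$ in $G$ is not needed as an independent step; it is a formal consequence of reduced words mapping to non-trivial elements.)
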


\begin{rkintro}
	Theorem \ref{Main theorem: V_Rs to G injective} becomes false if we replace $U_{sr}$ by $U_{srs}$, because $U_{r\alpha_s} \leq U_{srs}$, $U_{r\alpha_t} \leq U_{trt}$ and $\{ r\alpha_s, r\alpha_t \}$ is a prenilpotent pair of roots.
\end{rkintro}

\begin{rkintro}
	Theorem \ref{Main theorem: V_Rs to G injective} is only true over $\FF_2$. If the root groups contain more than two elements, then it follows from \cite[Lemma $18$ and Proposition $7$]{Ab96} that $ U_{s\alpha_t} \leq V_{\{s, t\}}$. But for $1 \neq u \in U_{s\alpha_r}$ and $1 \neq v \in U_{s\alpha_t}$ we have $[u, [u, [u, v]]] = 1$ in $G$. As $[u, [u, [u, v]]] \neq 1$ in $U_{sr} \star_{U_s} V_{\{s, t\}}$, we conclude that the mapping $U_{sr} \star_{U_s} V_{\{s, t\}} \to G$ has a non-trivial kernel.
\end{rkintro}

\subsection*{Application}

Commutator blueprints were introduced in \cite{BiRGD} as purely combinatorial objects which prescribe the commutation relations between prenilpotent pairs of \emph{positive} roots. In particular, they determine the groups $U_w$ mentioned above. The main result of \cite{BiRGD} implies that if one wants to construct RGD-systems over $\FF_2$, it suffices to construct commutator blueprints which are \emph{Weyl-invariant} (a stronger version of locally Weyl-invariance; cf.\ Definition \ref{Definition: commutator blueprint}) and \emph{faithful} (i.e.\ the canonical mappings $U_w \to \lim U_w$ are injective). In \cite{BiConstruction} we constructed Weyl-invariant commutator blueprints of type $(4, 4, 4)$. The remaining question whether these are also faithful will be answered in a subsequent paper. Theorem \ref{Theorem: Main application} and Corollary \ref{Corollary: Main application} of the present paper can be seen as an induction start of the proof that locally Weyl-invariant commutator blueprints are faithful.

\begin{rkintro}\label{rkintro: Choice of Psi}
	The induction start we mentioned above will be Theorem \ref{Theorem: Main application} and this theorem can be deduced essentially from Theorem \ref{Main theorem: V_Rs to G injective}. We could have chosen $\Psi$ in Theorem \ref{Main theorem: V_Rs to G injective} as $\Psi = \{ \alpha \in \Phi_+ \mid \{ srtrt, stst, trsrs \} \not\subseteq \alpha \}$ which corresponds to all the roots which are involved in the group $H_R$ (cf.\ Theorem \ref{Theorem: Main application}). But then the computations would have become much more technical. Moreover, one can deduce the case $\Psi = \{ \alpha \in \Phi_+ \mid \{ sr, trt \} \not\subseteq \alpha \}$ from the case $\Psi = \{ \alpha \in \Phi_+ \mid \{ srtrt, stst, trsrs \} \not\subseteq \alpha \}$. 
\end{rkintro}

\subsection*{Overview}

In Section \ref{Section: Preliminaries} we recall the basic definitions and fix notation. Moreover, we prove some elementary results which we will use later. In Section \ref{Section: Action on building} we first prove the technical Lemmas \ref{Lemma: Uplus acts on Deltaminus} and \ref{Lemma: Normal form}. Then we use them to prove Theorem \ref{theoremsubgroupKM}. The goal of Section \ref{Section: Application} is to prove Theorem \ref{Theorem: Main application}. The proof of this theorem uses a lot of Bass-Serre theory. We will first introduce several tree products and then prove that some of them are subgroups of others. Moreover, we will show that certain tree products are isomorphic to other tree products. For all the computations in tree products we will heavily use Proposition \ref{treeproducts}, Proposition \ref{treeofgroupsinjective} and Lemma \ref{folding}.

\subsection*{Acknowledgement}

I would like to thank Bernhard M\"uhlherr, Richard Weidmann and Stefan Witzel for helpful discussions.

\section{Preliminaries}\label{Section: Preliminaries}

\subsection*{Coxeter systems}

Let $(W, S)$ be a Coxeter system and let $\ell$ denote the corresponding length function. For $s, t \in S$ we denote the order of $st$ in $W$ by $m_{st}$. The \textit{rank} of the Coxeter system is the cardinality of the set $S$. It is well-known that for each $J \subseteq S$ the pair $(\langle J \rangle, J)$ is a Coxeter system (cf.\ \cite[Ch.\ IV, §$1$ Theorem $2$]{Bo68}). A subset $J \subseteq S$ is called \textit{spherical} if $\langle J \rangle$ is finite. Given a spherical subset $J$ of $S$, there exists a unique element of maximal length in $\langle J \rangle$, which we denote by $r_J$ (cf.\ \cite[Corollary $2.19$]{AB08}). The Coxeter system $(W, S)$ is said to be \emph{of type $(4, 4, 4)$} if it is of rank $3$ and $m_{st} = 4$ for all $s\neq t\in S$.

\subsection*{Buildings}

Let $(W, S)$ be a Coxeter system. A \textit{building of type $(W, S)$} is a pair $\Delta = (\C, \delta)$ where $\C$ is a non-empty set and where $\delta: \C \times \C \to W$ is a \textit{distance function} satisfying the following axioms, where $x, y\in \C$ and $w = \delta(x, y)$:
\begin{enumerate}[label=(Bu\arabic*)]
	\item $w = 1_W$ if and only if $x=y$;
	
	\item if $z\in \C$ satisfies $s := \delta(y, z) \in S$, then $\delta(x, z) \in \{w, ws\}$, and if, furthermore, $\ell(ws) = \ell(w) +1$, then $\delta(x, z) = ws$;
	
	\item if $s\in S$, there exists $z\in \C$ such that $\delta(y, z) = s$ and $\delta(x, z) = ws$.
\end{enumerate}
The \textit{rank} of $\Delta$ is the rank of the underlying Coxeter system. The elements of $\C$ are called \textit{chambers}. Given $s\in S$ and $x, y \in \C$, then $x$ is called \textit{$s$-adjacent} to $y$, if $\delta(x, y) = s$. The chambers $x, y$ are called \textit{adjacent}, if they are $s$-adjacent for some $s\in S$. A \textit{gallery} from $x$ to $y$ is a sequence $(x = x_0, \ldots, x_k = y)$ such that $x_{l-1}$ and $x_l$ are adjacent for all $1 \leq l \leq k$; the number $k$ is called the \textit{length} of the gallery. Let $(x_0, \ldots, x_k)$ be a gallery and suppose $s_i \in S$ with $\delta(x_{i-1}, x_i) = s_i$. Then $(s_1, \ldots, s_k)$ is called the \textit{type} of the gallery. A gallery from $x$ to $y$ of length $k$ is called \textit{minimal} if there is no gallery from $x$ to $y$ of length $<k$. In this case we have $\ell(\delta(x, y)) = k$ (cf.\ \cite[Corollary $5.17(1)$]{AB08}). Let $x, y, z \in \C$ be chambers such that $\ell(\delta(x, y)) = \ell(\delta(x, z)) + \ell(\delta(z, y))$. Then the concatenation of a minimal gallery from $x$ to $z$ and a minimal gallery from $z$ to $y$ yields a minimal gallery from $x$ to $y$. For two chambers $c, d \in \C$ we abbreviate $\ell(c, d) := \ell(\delta(c, d))$.

Given a subset $J \subseteq S$ and $x\in \C$, the \textit{$J$-residue of $x$} is the set $R_J(x) := \{y \in \C \mid \delta(x, y) \in \langle J \rangle \}$. Each $J$-residue is a building of type $(\langle J \rangle, J)$ with the distance function induced by $\delta$ (cf.\ \cite[Corollary $5.30$]{AB08}). A \textit{residue} is a subset $R$ of $\C$ such that there exist $J \subseteq S$ and $x\in \C$ with $R = R_J(x)$. Since the subset $J$ is uniquely determined by $R$, the set $J$ is called the \textit{type} of $R$ and the \textit{rank} of $R$ is defined to be the cardinality of $J$. A residue is called \textit{spherical} if its type is a spherical subset of $S$. A \textit{panel} is a residue of rank $1$. An \textit{$s$-panel} is a panel of type $\{s\}$ for $s\in S$. For $c\in \C$ and $s\in S$ we denote the $s$-panel containing $c$ by $\P_s(c)$. The building $\Delta$ is called \textit{thick}, if each panel of $\Delta$ contains at least three chambers.

Given $x\in \C$ and a $J$-residue $R \subseteq \C$, then there exists a unique chamber $z\in R$ such that $\ell(\delta(x, y)) = \ell(\delta(x, z)) + \ell(\delta(z, y))$ holds for each $y\in R$ (cf.\ \cite[Proposition $5.34$]{AB08}). The chamber $z$ is called the \textit{projection of $x$ onto $R$} and is denoted by $\proj_R x$. Moreover, if $z = \proj_R x$ we have $\delta(x, y) = \delta(x, z) \delta(z, y)$ for each $y\in R$. Let $R \subseteq T$ be two residues of $\Delta$. Then $\proj_R c = \proj_R \proj_T c$ holds for all $c\in \C$ by \cite[Proposition $2$]{DS87}.

An \textit{(type-preserving) automorphism} of a building $\Delta = (\C, \delta)$ is a bijection $\phi:\C \to \C$ such that $\delta(\phi(c), \phi(d)) = \delta(c, d)$ holds for all chambers $c, d \in \C$. We remark that some authors distinguish between automorphisms and type-preserving automorphisms. An automorphism in our sense is type-preserving. We denote the set of all automorphisms of the building $\Delta$ by $\Aut(\Delta)$. It is a basic fact that the projection commutes with each automorphism. More precisely, let $c \in \C$, let $R$ be a residue of $\Delta$ and let $\phi \in \Aut(\Delta)$. It follows directly from the uniqueness of $\proj_R c$ that $\phi(\proj_R c) = \proj_{\phi(R)} \phi(c)$.

\begin{example}
	We define $\delta: W \times W \to W, (x, y) \mapsto x^{-1}y$. Then $\Sigma(W, S) := (W, \delta)$ is a building of type $(W, S)$. The group $W$ acts faithful on $\Sigma(W, S)$ by multiplication from the left, i.e.\ $W \leq \Aut(\Sigma(W, S))$.
\end{example}

A subset $\Sigma \subseteq \C$ is called \textit{convex} if for any two chambers $c, d \in \Sigma$ and any minimal gallery $(c_0 = c, \ldots, c_k = d)$, we have $c_i \in \Sigma$ for all $0 \leq i \leq k$. A subset $\Sigma \subseteq \C$ is called \textit{thin} if $P \cap \Sigma$ contains exactly two chambers for every panel $P \subseteq \C$ which meets $\Sigma$. An \textit{apartment} is a non-empty subset $\Sigma \subseteq \C$, which is convex and thin.

\subsection*{Roots}

Let $(W, S)$ be a Coxeter system. A \textit{reflection} is an element of $W$ that is conjugate to an element of $S$. For $s\in S$ we let $\alpha_s := \{ w\in W \mid \ell(sw) > \ell(w) \}$ be the \textit{simple root} corresponding to $s$. A \textit{root} is a subset $\alpha \subseteq W$ such that $\alpha = v\alpha_s$ for some $v\in W$ and $s\in S$. We denote the set of all roots by $\Phi(W, S)$. The set $\Phi(W, S)_+ = \{ \alpha \in \Phi(W, S) \mid 1_W \in \alpha \}$ is the set of all \textit{positive roots} and $\Phi(W, S)_- = \{ \alpha \in \Phi(W, S) \mid 1_W \notin \alpha \}$ is the set of all \textit{negative roots}. For each root $\alpha \in \Phi(W, S)$ we denote the \textit{opposite} root by $-\alpha$ and we denote the unique reflection which interchanges these two roots by $r_{\alpha} \in W \leq \Aut(\Sigma(W, S))$. A pair $\{ \alpha, \beta \}$ of roots is called \textit{prenilpotent} if both $\alpha \cap \beta$ and $(-\alpha) \cap (-\beta)$ are non-empty sets. For such a pair we will write $\left[ \alpha, \beta \right] := \{ \gamma \in \Phi(W, S) \mid \alpha \cap \beta \subseteq \gamma \text{ and } (-\alpha) \cap (-\beta) \subseteq -\gamma \}$ and $(\alpha, \beta) := \left[ \alpha, \beta \right] \backslash \{ \alpha, \beta \}$.

\begin{convention}
	For the rest of this paper we let $(W, S)$ be a Coxeter system of finite rank and we define $\Phi := \Phi(W, S)$ (resp.\ $\Phi_+, \Phi_-$).
\end{convention}

\subsection*{Coxeter buildings}

In this section we consider the Coxeter building $\Sigma(W, S)$. For $\alpha \in \Phi$ we denote by $\partial \alpha$ the set of all panels stabilized by $r_{\alpha}$. The set $\partial \alpha$ is called the \textit{wall} associated with $\alpha$. Let $G = (c_0, \ldots, c_k)$ be a gallery. We say that $G$ \textit{crosses the wall $\partial \alpha$} if there exists $1 \leq i \leq k$ such that $\{ c_{i-1}, c_i \} \in \partial \alpha$. It is a basic fact that a minimal gallery crosses a wall at most once (cf.\ \cite[Lemma $3.69$]{AB08}). Let $(c_0, \ldots, c_k)$ and $(d_0 = c_0, \ldots, d_k = c_k)$ be two minimal galleries from $c_0$ to $c_k$ and let $\alpha \in \Phi$. Then $\partial \alpha$ is crossed by the minimal gallery $(c_0, \ldots, c_k)$ if and only if it is crossed by the minimal gallery $(d_0, \ldots, d_k)$. For a minimal gallery $G = (c_0, \ldots, c_k)$, $k \geq 1,$ we denote the unique root containing $c_{k-1}$ but not $c_k$ by $\alpha_G$. For $\alpha_1, \ldots, \alpha_k \in \Phi$ we say that a minimal gallery $G = (c_0, \ldots, c_k)$ \textit{crosses the sequence of roots} $(\alpha_1, \ldots, \alpha_k)$, if $c_{i-1} \in \alpha_i$ and $c_i \notin \alpha_i$ all $1\leq i \leq k$.

We denote the set of all minimal galleries $(c_0 = 1_W, \ldots, c_k)$ starting at $1_W$ by $\mathrm{Min}$. For $w\in W$ we denote the set of all $G \in \mathrm{Min}$ of type $(s_1, \ldots, s_k)$ with $w = s_1 \cdots s_k$ by $\mathrm{Min}(w)$. For $w\in W$ with $\ell(sw) = \ell(w) -1$ we let $\mathrm{Min}_s(w)$ be the set of all $G \in \mathrm{Min}(w)$ of type $(s, s_2, \ldots, s_k)$. We extend this notion to the case $\ell(sw) = \ell(w) +1$ by defining $\mathrm{Min}_s(w) := \mathrm{Min}(w)$. Let $w\in W$, $s\in S$ and $G = (c_0, \ldots, c_k) \in \mathrm{Min}_s(w)$. If $\ell(sw) = \ell(w) -1$, then $c_1 = s$ and we define $sG := (sc_1 = 1_W, \ldots, sc_k) \in \mathrm{Min}(sw)$. If $\ell(sw) = \ell(w) +1$, we define $sG := (1_W, sc_0 = s, \ldots, sc_k) \in \mathrm{Min}(sw)$.

\begin{convention}
	For the rest of this subsection we assume that $(W, S)$ is of type $(4, 4, 4)$ and that $S = \{r, s, t\}$.
\end{convention}

\begin{lemma}[{\cite[Lemma $2.14$]{BiConstruction}}]\label{wordsincoxetergroup}
	Suppose $w\in W$ with $\ell(ws) = \ell(w) +1 = \ell(wt)$ and suppose $w' \in \langle s, t\rangle$ with $\ell(w') \geq 2$. Then $\ell(ww'rf) = \ell(w) + \ell(w') +1 + \ell(f)$ for each $f \in \{1_W, s, t\}$.
\end{lemma}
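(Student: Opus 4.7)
The plan is to verify length-additivity at each stage of the factorization $ww'rf$. The claim decomposes into three sub-identities: $(A)$ $\ell(ww')=\ell(w)+\ell(w')$ for every $w'\in\langle s,t\rangle$; $(B)$ $\ell(ww'r)=\ell(ww')+1$; and $(C)$ $\ell(ww'rf)=\ell(ww'r)+\ell(f)$ for $f\in\{1_W,s,t\}$. Step $(A)$ is immediate from parabolic-coset theory: the hypothesis $\ell(ws)=\ell(wt)=\ell(w)+1$ says exactly that $w$ is the unique minimal-length representative of its left coset $w\langle s,t\rangle$, so right-multiplication by any element of $\langle s,t\rangle$ is length-additive.

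For step $(B)$, I plan to work with reduced expressions $w=s_1\cdots s_k$ and $w'=u_1\cdots u_m$ where $u_j\in\{s,t\}$ and $m\geq 2$, and to show that the concatenated word $s_1\cdots s_k u_1\cdots u_m r$ is reduced. Suppose not; the deletion condition then permits the removal of two letters without changing the value. Parity considerations on the counts of $r$, $s$, $t$ in the word versus in $ww'r$ force the two removed letters to have matching type: either both $r$, both $s$, or both $t$. Removing both $s$'s (resp.\ both $t$'s) from inside the $w'$-block is immediately impossible because the resulting shortened $\tilde{w}'\in\langle s,t\rangle$ would have to equal $w'$ (after cancelling the trailing $r$'s in the identity) while having strictly smaller length; the remaining mixed sub-cases, in which one removed letter lies in the $w$-block and the other in the $w'$-block, reduce by explicit cancellation to an equality of reflections that forces $\ell(ws)\leq\ell(w)-1$ or $\ell(wt)\leq\ell(w)-1$, contradicting the hypothesis.

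The difficult sub-case of $(B)$ is removing two $r$'s, one being the trailing $r$ and the other some $s_i=r$ inside $w$'s expression; the resulting identity $w\cdot(w'rw'^{-1})=s_1\cdots\hat{s}_i\cdots s_k$ is merely a tautological restatement of $\ell(ww'r)<\ell(ww')$, so some external information is needed. I plan to exploit the specific $(4,4,4)$ structure: because all Coxeter labels are even, the three simple reflections $r$, $s$, $t$ fall into three pairwise distinct $W$-conjugacy classes. Consequently, in the minimal gallery $1_W\to w\to ww'$ produced by $(A)$, the only walls that can possibly coincide with $\partial(ww'\alpha_r)$ (whose associated reflection is $W$-conjugate to $r$) are the walls at positions $i$ with $s_i=r$ of the sub-gallery $1_W\to w$. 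The coincidence condition at any such $i$ unravels to $(s_{i+1}\cdots s_k)w'\in C_W(r)$, which I would rule out using $\ell(w')\geq 2$, the standard fact that in type $(4,4,4)$ no element of $\langle s,t\rangle$ other than $1_W$ commutes with $r$, and a direct analysis of the dihedral subgroups $\langle r,s\rangle$ and $\langle r,t\rangle$ of order $8$.

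Step $(C)$ for $f\in\{s,t\}$ then follows by the same deletion-condition analysis applied to the augmented word $s_1\cdots s_k u_1\cdots u_m r f$; the new deletions involve the final letter $f$, and each sub-case reduces either to the same conjugacy-class dichotomy or directly to a contradiction with the hypothesis on $\ell(ws)$ or $\ell(wt)$. The main obstacle throughout is the two-$r$'s sub-case of $(B)$: all other sub-cases follow by elementary cancellation against the hypothesis, but there the proof relies in an essential way on both the even Coxeter labels of $(4,4,4)$ (to isolate obstructing walls by conjugacy class) and the centralizer computation inside the parabolics $\langle r,s\rangle$ and $\langle r,t\rangle$.
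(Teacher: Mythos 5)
The paper does not prove this lemma itself — it is imported verbatim from \cite{BiConstruction} (Lemma~$2.14$) — so there is no in-paper proof to compare against. I therefore assess your argument on its own terms.

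Your decomposition into $(A)$, $(B)$, $(C)$ is sensible, and $(A)$ and the ``easy'' sub-cases of $(B)$ and $(C)$ are fine: the min-coset-representative observation gives $(A)$, and because all edge labels equal $4$ the simple reflections $r$, $s$, $t$ lie in pairwise distinct conjugacy classes, so the two letters removed by the deletion condition must have the same type, and the same-type-inside-$ww'$ deletions do contradict either reducedness of the chosen words or the hypothesis $\ell(ws)=\ell(wt)=\ell(w)+1$.

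The genuine gap is exactly where you flagged the main obstacle, and your proposed fix does not close it. In the two-$r$ sub-case you correctly unravel the coincidence of walls to the condition $v := s_{i+1}\cdots s_k\, w' \in C_W(r)$ (indeed the stronger $v\alpha_r=\alpha_r$). But you then propose to rule this out using the fact that no nontrivial element of $\langle s,t\rangle$ centralizes $r$ together with ``a direct analysis of the dihedral subgroups $\langle r,s\rangle$ and $\langle r,t\rangle$.'' That simply does not address $v$: the prefix $s_{i+1}\cdots s_k$ is an arbitrary suffix of a reduced word for $w$, so $v$ is a generic element of $W$, not an element of $\langle s,t\rangle$ nor of any rank-$2$ standard parabolic. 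Moreover $C_W(r)$ in the hyperbolic $(4,4,4)$ group is infinite — it contains $(sr)^2$, $(tr)^2$, $r$, and much more — so there is no small finite list to inspect, and nothing in your argument exploits the two pieces of information that actually matter here, namely that $w$ is minimal in $w\langle s,t\rangle$ and that $\ell(w')\geq 2$. As written, the step ``rule out $v\in C_W(r)$'' is an assertion, not a proof, and it is precisely the content of the lemma. The same issue re-surfaces in $(C)$: the deletions there pair the trailing $f$ with an $f$ lying in the $w$-block, leading again to a centralizer condition on a generic element, which the dihedral analysis cannot resolve. You would need a genuinely different mechanism — for instance a wall-crossing/projection argument of the type used in \ref{Lemma: not both down} and \ref{mingallinrep}, or an induction on $\ell(w)$ or $\ell(w')$ that keeps track of how $r$-walls separate $1_W$ from the $\{s,t\}$-residue of $w$ — to close this case.
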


\begin{lemma}[see {\cite[Lemma $2.9$]{BiCoxGrowth}} and {\cite[Lemma $2.16$]{BiConstruction}}]\label{Lemma: not both down}
	Suppose $w \in W$ with $\ell(ws) = \ell(w) +1 = \ell(wt)$. Then $\ell(w) +2 \in \{ \ell(wsr), \ell(wtr) \}$. Moreover, if $\ell(wsr) = \ell(w)$, then $\ell(wsrt) = \ell(w)+1$.
\end{lemma}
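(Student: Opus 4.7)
The plan is to prove both parts by combining the reduced-expression conclusion of Lemma \ref{wordsincoxetergroup} with an elementary projection analysis inside the rank-2 octagonal residues (each such residue has $2m_{ij} = 8$ chambers because every $m_{ij} = 4$). Throughout I set $k := \ell(w)$.

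For the first claim I argue by contradiction: suppose $\ell(wsr) = \ell(wtr) = k$. In the $\{r,s\}$-residue $R_{\{r,s\}}(w)$, the chambers $w, ws, wsr$ occupy three consecutive positions of the octagon and have lengths $k, k+1, k$. Solving the octagon length equations for the projection of $1_W$ pins it down as the chamber $wrsr$ opposite to $ws$, which forces $\ell(wrsr) = k - 3$, $\ell(wr) = k - 1$ and $\ell(wrs) = k - 2$. The symmetric analysis inside $R_{\{r,t\}}(w)$ yields $\ell(wrt) = k - 2$. Setting $u := wr$, both $s$ and $t$ are then right descents of $u$, so the analogous opposite-chamber computation inside $R_{\{s,t\}}(u)$ identifies the projection of $1_W$ with $q := u \cdot stst$, giving $\ell(q) = k - 5$ and $\ell(qs) = \ell(qt) = \ell(q) + 1$. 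Hence $q$ fits the hypothesis of Lemma \ref{wordsincoxetergroup}, and applying that lemma with $w' := tst$ and $f := 1_W$ yields $\ell(q \cdot tst \cdot r) = \ell(q) + 4 = k - 1$. But the identity $stst \cdot tst = s$ (obtained by repeatedly cancelling $s^2 = t^2 = 1_W$) gives $q \cdot tst \cdot r = u \cdot s \cdot r = wrsr$, contradicting $\ell(wrsr) = k - 3$.

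For the moreover statement, assume $\ell(wsr) = k$ and, for contradiction, $\ell(wsrt) = k - 1$. The octagon analysis above already provides $\ell(wsrs) = k - 1$, so both $s$ and $t$ are right descents of $wsr$. Applying the opposite-chamber argument inside $R_{\{s,t\}}(wsr)$ produces $p := wsr \cdot stst$ with $\ell(p) = k - 4$ and $\ell(ps) = \ell(pt) = \ell(p) + 1$, so Lemma \ref{wordsincoxetergroup} applies to $p$. With $w' := tsts$ and $f := s$ the lemma gives $\ell(p \cdot tsts \cdot r \cdot s) = \ell(p) + 6 = k + 2$. On the other hand $stst \cdot tsts = 1_W$ in $\langle s, t \rangle$, so $p \cdot tsts \cdot r \cdot s = wsr \cdot r \cdot s = w$, forcing $\ell(w) = k + 2$, contradicting $\ell(w) = k$.

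The main technical step is the octagon projection analysis used to identify the ``apex'' chambers $q = wr \cdot stst$ and $p = wsr \cdot stst$; once these identifications are in hand, Lemma \ref{wordsincoxetergroup} provides the crucial reduced-length formula, and each contradiction drops out by evaluating the same group element in two different ways, with the reconciliation being nothing more than a trivial word simplification inside the dihedral subgroup $\langle s, t \rangle$.
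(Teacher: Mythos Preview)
The paper does not prove this lemma itself; it merely cites \cite[Lemma~2.9]{BiCoxGrowth} and \cite[Lemma~2.16]{BiConstruction}. Your argument is a correct, self-contained proof.

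Your approach is natural: in each rank-$2$ residue (an octagon, since $m_{ij}=4$) the length function takes a unique minimum at the projection of $1_W$ and a unique maximum at the opposite chamber, so knowing three consecutive lengths determines the whole profile. You use this repeatedly to locate the projection and then feed the resulting element into Lemma~\ref{wordsincoxetergroup} to obtain a length that contradicts one already computed. Both word identities $stst\cdot tst = s$ and $stst\cdot tsts = 1_W$ are correct in $\langle s,t\rangle$, and the two invocations of Lemma~\ref{wordsincoxetergroup} (with $w'=tst$, $f=1_W$ and with $w'=tsts$, $f=s$) are legitimate since $\ell(w')\geq 2$ in each case.

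One small point worth making explicit: your argument tacitly assumes $k$ is large enough for the derived lengths $\ell(q)=k-5$ (first part) and $\ell(p)=k-4$ (second part) to be non-negative. For smaller $k$ the contradiction in fact arrives earlier, since a chamber in a spherical residue with both neighbours shorter forces the projection to lie at distance $4$, which is impossible when the chamber's length is below $4$. Mentioning this case distinction in one sentence would make the proof complete as written.
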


\begin{lemma}[{\cite[Lemma $2.18$]{BiConstruction}}]\label{mingallinrep}
	Let $H = (d_0, \ldots, d_4)$ be a minimal gallery of type $(r, s, t, r)$ and let $\beta \in \Phi$ with $\{ d_0, d_1 \} \in \partial \beta$ and $d_0 \in \beta$. Then $\beta \subsetneq \gamma$ for each $\gamma \in \{ \alpha_{(d_0, \ldots, d_3)}, \alpha_{(d_0, \ldots, d_4)} \}$.
\end{lemma}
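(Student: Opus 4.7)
Without loss of generality we may assume $d_0 = 1_W$, so $d_1 = r$, $d_2 = rs$, $d_3 = rst$, $d_4 = rstr$; then $\beta = \alpha_r$, while $\gamma_1 := \alpha_{(d_0,\ldots,d_3)} = rs\cdot\alpha_t$ and $\gamma_2 := \alpha_{(d_0,\ldots,d_4)} = rst\cdot\alpha_r$. Fix $\gamma \in \{\gamma_1, \gamma_2\}$ and let $k \in \{3, 4\}$ be the associated index.

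The walls $\partial\beta$ and $\partial\gamma$ are distinct, since the minimal gallery $H$ crosses each wall at most once, and in particular $\beta \notin \{\gamma, -\gamma\}$. Moreover, the chambers along $H$ already occupy three of the four intersections $\pm\beta \cap \pm\gamma$: by the minimality of $H$ we have $d_0 \in \beta \cap \gamma$, $d_1 \in (-\beta) \cap \gamma$ and $d_k \in (-\beta) \cap (-\gamma)$. In a Coxeter complex two distinct walls either cross---in which case all four quadrants $\pm\beta \cap \pm\gamma$ are nonempty---or are parallel, in which case exactly one of the four is empty. Hence it suffices to show that $\partial\beta$ and $\partial\gamma$ do not cross: the empty quadrant will then have to be $\beta \cap (-\gamma)$, giving $\beta \subseteq \gamma$ and, in view of the already established $\beta \neq \gamma$, the desired strict inclusion $\beta \subsetneq \gamma$.

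Two distinct walls fail to cross precisely when the product of the corresponding reflections has infinite order in $W$. Using $r_{w\alpha} = w r_\alpha w^{-1}$ one computes $r_\beta = r$, $r_{\gamma_1} = rstsr$ and $r_{\gamma_2} = rstrtsr$, whence the relevant products are $r \cdot r_{\gamma_1} = stsr$ and $r \cdot r_{\gamma_2} = strtsr$. I claim that $\ell((stsr)^n) = 4n$ and $\ell((strtsr)^n) = 6n$ for every $n \geq 1$, which forces these elements to have infinite order. The base cases follow directly from Lemma \ref{wordsincoxetergroup}: with $(w, w', f) = (1_W, sts, 1_W)$ the lemma gives $\ell(stsr) = 4$, and with $(w, w', f) = (str, ts, 1_W)$ it gives $\ell(strtsr) = 6$, the preconditions $\ell(str\cdot s) = \ell(str\cdot t) = 4$ being verified beforehand by two further applications of the lemma with $(w, w', f) = (1_W, st, s)$ and $(1_W, st, t)$.

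The inductive step, say for $(strtsr)^n$, is a chain of applications of Lemma \ref{wordsincoxetergroup}. Assuming the inductive hypothesis one first applies the lemma with $w = (strtsr)^n$, $w' = st$ and $f \in \{1_W, s, t\}$ to compute $\ell((strtsr)^n \cdot str) = 6n + 3$ and to check that $s$ and $t$ are not right descents of $(strtsr)^n \cdot str$; one then applies the lemma once more with $w = (strtsr)^n \cdot str$, $w' = ts$ and $f = 1_W$ to conclude $\ell((strtsr)^{n+1}) = 6n + 6$, together with the auxiliary length identities ($f \in \{s,t\}$) needed to continue. An analogous chain handles $(stsr)^n$. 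The main technical difficulty is precisely this propagation of the right-descent hypotheses of Lemma \ref{wordsincoxetergroup} through the induction; no input beyond repeated uses of that lemma is required.
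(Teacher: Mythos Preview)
The paper does not prove this lemma; it is quoted verbatim from \cite[Lemma~2.18]{BiConstruction} with no argument supplied, so there is no in-paper proof to compare against.

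Your argument is correct. The reduction to $d_0 = 1_W$ by left translation is valid since $W$ acts by isometries permuting the roots, and your identifications $\beta = \alpha_r$, $\gamma_1 = rs\alpha_t$, $\gamma_2 = rst\alpha_r$ are right. The dichotomy you invoke---that two distinct walls in a Coxeter complex either cross (all four intersections $\pm\beta \cap \pm\gamma$ nonempty, equivalently $o(r_\beta r_\gamma) < \infty$) or are nested (exactly one intersection empty)---is a standard structural fact, and the three witnesses $d_0, d_1, d_k$ correctly pin down which quadrant must vanish. The length inductions for $\ell((stsr)^n) = 4n$ and $\ell((strtsr)^n) = 6n$ via Lemma~\ref{wordsincoxetergroup} go through exactly as you sketch: carrying the auxiliary hypotheses $\ell(w\cdot s) = \ell(w)+1 = \ell(w\cdot t)$ along the induction (obtained from the $f \in \{s,t\}$ cases of the lemma) is precisely what is needed, and for $(stsr)^n$ a single application with $w' = sts$ per step already suffices. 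The only cosmetic point is that the infinite-order conclusion is immediate once $\ell((stsr)^n) = 4n > 0$ for all $n \geq 1$, so you need not phrase it as ``forces''; it is the definition.
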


\begin{lemma}\label{Lemma: Subset contained in certain root}
	We have $tstr\alpha_s \cap stsr\alpha_t \cap (W\backslash \{ r_{\{s, t\}}r \}) \subseteq r_{\{s, t\}} \alpha_r$.
\end{lemma}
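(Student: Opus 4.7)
The plan is to pull the roots $tstr\alpha_s$ and $stsr\alpha_t$ back to a common source $ststr = r_{\{s,t\}}r$ and then describe the intersection as a translate of $\alpha_s \cap \alpha_t$. The key ingredient is the commutation $rsr\cdot s = s\cdot rsr$, which holds in $W$ by the braid relation $srsr = rsrs$: one has $s\cdot rsr = srsr = rsrs = rsr\cdot s$. Since $rsr$ is also an involution, this gives $(rsr)s(rsr)^{-1} = s$, so the reflection fixing the wall $\partial(rsr\alpha_s)$ is $s$ itself, whence $rsr\alpha_s = \pm\alpha_s$. Both roots contain $1_W$ (as $\ell(srsr) = 4 > 3 = \ell(rsr)$), so $rsr\alpha_s = \alpha_s$, and symmetrically $rtr\alpha_t = \alpha_t$.

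Next I would verify, by direct cancellation using $tsts = stst$, that $tstr\cdot rsr = tst\cdot sr = tsts\cdot r = ststr$, and likewise $stsr\cdot rtr = sts\cdot tr = stst\cdot r = ststr$. Combining this with the previous paragraph yields
\[ tstr\alpha_s = (tstr\cdot rsr)\alpha_s = ststr\,\alpha_s, \qquad stsr\alpha_t = (stsr\cdot rtr)\alpha_t = ststr\,\alpha_t, \]
so the intersection takes the clean form
\[ tstr\alpha_s \cap stsr\alpha_t = ststr\cdot(\alpha_s \cap \alpha_t). \]

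Finally, any $x$ in this intersection writes as $x = ststr\cdot y_0$ with $y_0 \in \alpha_s \cap \alpha_t$, and the hypothesis $x \neq r_{\{s,t\}}r = ststr$ is equivalent to $y_0 \neq 1_W$. Since no reduced expression of $y_0$ starts with $s$ or $t$, every reduced expression of $y_0$ must begin with $r$, giving $\ell(ry_0) = \ell(y_0) - 1$ and hence $ry_0 \in \alpha_r$. But $(stst)^{-1}x = tsts\cdot ststr\cdot y_0 = ry_0$, so $x \in stst\,\alpha_r = r_{\{s,t\}}\alpha_r$, as required. The one delicate step is the identity $rsr\alpha_s = \alpha_s$; this is specific to $m_{rs} = 4$ (for $m_{rs} = 3$ one gets $rsr\alpha_s = -\alpha_r$ instead), so the argument genuinely uses the $(4,4,4)$ hypothesis.
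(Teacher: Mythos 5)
Your proof is correct, and it takes a genuinely different route from the paper's. The paper argues directly by a four-way case analysis on the descents of $w^{-1}$, repeatedly invoking the auxiliary length Lemma~\ref{wordsincoxetergroup}; you instead observe that in type $(4,4,4)$ the braid relation gives $(rsr)s(rsr)^{-1} = s$, so $rsr$ stabilizes the wall of $\alpha_s$ and (since $rsr \in \alpha_s$ as $\ell(srsr)>\ell(rsr)$) fixes $\alpha_s$ itself, and symmetrically $rtr\alpha_t = \alpha_t$. Combined with $tstr\cdot rsr = r_{\{s,t\}}r = stsr\cdot rtr$, this collapses the intersection to $r_{\{s,t\}}r\,(\alpha_s\cap\alpha_t)$, after which the conclusion follows from the elementary descent fact that any non-identity element of $\alpha_s\cap\alpha_t$ begins with $r$. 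This is cleaner and more conceptual: it isolates exactly where the $(4,4,4)$ hypothesis enters (the identity $rsr\alpha_s = \alpha_s$ fails for $m_{rs}=3$, as you note), replaces a length-bookkeeping case split with a single structural identity, and avoids the dependence on Lemma~\ref{wordsincoxetergroup} entirely. The paper's case analysis has the advantage of staying closer to the toolkit already set up for the later, heavier lemmas in Section~\ref{Section: Action on building}, but for this particular statement your argument is shorter and more transparent.
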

\begin{proof}
	Let $w \in tstr\alpha_s \cap stsr \alpha_t$ be an element. We show that $w = r_{\{s, t\}} r$ or that $r_{\{s, t\}} w \in \alpha_r$, i.e.\ $\ell(r r_{\{s, t\}} w) = \ell(r_{\{s, t\}} w) +1$. We distinguish the following cases:
	\begin{enumerate}[label=(\roman*)]
		\item $\ell(w^{-1}) +2 \in \{ \ell(w^{-1}ts), \ell(w^{-1}st) \}$: Then $\ell(w^{-1} r_{\{s, t\}}) \geq \ell(\proj_{R_{\{s, t\}(w^{-1})}} 1_W) +2$ and we deduce $\ell(rr_{\{s, t\}}w) = \ell( w^{-1} r_{\{s, t\}}r ) = \ell(w^{-1} r_{\{s, t\}}) +1 = \ell(r_{\{s, t\}}w) +1$ from Lemma \ref{wordsincoxetergroup}.
		
		\item $\ell(w^{-1}s) = \ell(w^{-1}) +1$ and $\ell(w^{-1}st) = \ell(w^{-1})$: By assumption, we have $w\in tstr\alpha_s$ and hence $\ell\left(s(rtstw)\right) = \ell(rtstw) +1$. This implies $\ell(w^{-1}tstrs) = \ell(w^{-1}tstr) +1$ and, in particular, $\ell(w^{-1}r_{\{s, t\}}r) = \ell(w^{-1} r_{\{s, t\}}) +1$.
		
		\item $\ell(w^{-1}t) = \ell(w^{-1}) +1$ and $\ell(w^{-1}ts) = \ell(w^{-1})$: This follows similar as in the previous case.
		
		\item $\ell(w^{-1}s) = \ell(w^{-1}) -1 = \ell(w^{-1}t)$: If $\ell(rr_{\{s, t\}}w) = \ell(r_{\{s, t\}}w) +1$ there is nothing to show. Thus we suppose $\ell(rr_{\{s, t\}}w) = \ell(r_{\{s, t\}}w) -1$. Assume that $\ell(w^{-1}stsr) = \ell(w^{-1} sts)-1$. Then we would have $\ell(w^{-1}stsrt) = \ell(w^{-1}sts)-2$, which is a contradiction to the assumption $w \in stsr\alpha_t$. Thus we have $\ell(w^{-1}stsr) = \ell(w^{-1}sts) +1$. Using the fact that $w \in tstr\alpha_s \cap stsr\alpha_t$, we infer $\ell(w^{-1}tstrs) = \ell(w^{-1}tst) +2$ and $\ell(w^{-1}stsrt) = \ell(w^{-1}sts)+2$. This yields $\ell(w^{-1}r_{\{s, t\}}ru) = \ell(w^{-1}r_{\{s, t\}}r) +1$ for each $u\in S = \{r, s, t\}$ and hence $w^{-1}r_{\{s, t\}}r = 1$. \qedhere
	\end{enumerate}
\end{proof}

\subsection*{Twin buildings}

Let $\Delta_+ = (\C_+, \delta_+), \Delta_- = (\C_-, \delta_-)$ be two buildings of the same type $(W, S)$. A \textit{codistance} (or a \textit{twinning}) between $\Delta_+$ and $\Delta_-$ is a mapping $\delta_* : \left( \C_+ \times \C_- \right) \cup \left( \C_- \times \C_+ \right) \to W$ satisfying the following axioms, where $\epsilon \in \{+,-\}, x\in \Cp, y\in \Cm$ and $w=\delta_*(x, y)$:
\begin{enumerate}[label=(Tw\arabic*)]
	\item $\delta_*(y, x) = w^{-1}$;
	
	\item if $z\in \Cm$ is such that $s := \delta_{-\epsilon}(y, z) \in S$ and $\ell(ws) = \ell(w) -1$, then $\delta_*(x, z) = ws$;
	
	\item if $s\in S$, there exists $z\in \Cm$ such that $\delta_{-\epsilon}(y, z) = s$ and $\delta_*(x, z) = ws$.
\end{enumerate}
A \textit{twin building of type $(W, S)$} is a triple $\Delta = (\Delta_+, \Delta_-, \delta_*)$ where $\Delta_+ = (\C_+, \delta_+)$, $\Delta_- = (\C_-, \delta_-)$ are buildings of type $(W, S)$ and where $\delta_*$ is a twinning between $\Delta_+$ and $\Delta_-$.

We put $\C := \C_+ \cup \C_-$. Let $\epsilon \in \{+,-\}$. For $x\in \Cp$ we put $x^{\mathrm{op}} := \{ y\in \Cm \mid \delta_*(x, y) = 1_W \}$. It is a direct consequence of (Tw$1$) that $y\in x^{\mathrm{op}}$ if and only if $x\in y^{\mathrm{op}}$ for any pair $(x, y) \in \Cp \times \Cm$. If $y\in x^{\mathrm{op}}$ then we say that $y$ is \textit{opposite} to $x$ or that \textit{$(x, y)$ is a pair of opposite chambers}.

A \textit{residue} (resp.\ \textit{panel}) of $\Delta$ is a residue (resp.\ panel) of $\Delta_+$ or $\Delta_-$; given a residue $R\subseteq \C$ then we define its type and rank as before. The twin building $\Delta$ is called \textit{thick} if $\Delta_+$ and $\Delta_-$ are thick.

Let $\Sigma_+ \subseteq \C_+$ and $\Sigma_- \subseteq \C_-$ be apartments of $\Delta_+$ and $\Delta_-$, respectively. Then the set $\Sigma := \Sigma_+ \cup \Sigma_-$ is called \textit{twin apartment} if $\vert x^{\mathrm{op}} \cap \Sigma \vert = 1$ holds for each $x\in \Sigma$. If $(x, y)$ is a pair of opposite chambers, then there exists a unique twin apartment containing $x$ and $y$. We will denote it by $A(x, y)$. For more information we refer to \cite[Section $5.8.4$]{AB08}.

An \textit{automorphism} of $\Delta$ is a bijection $\phi: \C \to \C$ such that $\phi$ preserves the sign, the distance functions $\delta_{\epsilon}$ and the codistance $\delta_*$.

\subsection*{Root group data}

An \textit{RGD-system of type $(W, S)$} is a pair $\mathcal{D} = \left( G, \left( U_{\alpha} \right)_{\alpha \in \Phi}\right)$ consisting of a group $G$ together with a family of subgroups $U_{\alpha}$ (called \textit{root groups}) indexed by the set of roots $\Phi$, which satisfies the following axioms, where $H := \bigcap_{\alpha \in \Phi} N_G(U_{\alpha})$ and $U_{\epsilon} := \langle U_{\alpha} \mid \alpha \in \Phi_{\epsilon} \rangle$ for $\epsilon \in \{+, -\}$:
\begin{enumerate}[label=(RGD\arabic*)] \setcounter{enumi}{-1}
	\item For each $\alpha \in \Phi$, we have $U_{\alpha} \neq \{1\}$.
	
	\item For each prenilpotent pair $\{ \alpha, \beta \} \subseteq \Phi$ with $\alpha \neq \beta$, the commutator group $[U_{\alpha}, U_{\beta}]$ is contained in the group $U_{(\alpha, \beta)} := \langle U_{\gamma} \mid \gamma \in (\alpha, \beta) \rangle$.
	
	\item For every $s\in S$ and each $u\in U_{\alpha_s} \backslash \{1\}$, there exist $u', u'' \in U_{-\alpha_s}$ such that the product $m(u) := u' u u''$ conjugates $U_{\beta}$ onto $U_{s\beta}$ for each $\beta \in \Phi$.
	
	\item For each $s\in S$, the group $U_{-\alpha_s}$ is not contained in $U_+$.
	
	\item $G = H \langle U_{\alpha} \mid \alpha \in \Phi \rangle$.
\end{enumerate}
For $w\in W$ we define $U_w := \langle U_{\alpha} \mid w \notin \alpha \in \Phi_+ \rangle$. Let $G \in \mathrm{Min}(w)$ and let $(\alpha_1, \ldots, \alpha_k)$ be the sequence of roots crossed by $G$. Then we have $U_w = U_{\alpha_1} \cdots U_{\alpha_k}$. An RGD-system $\mathcal{D} = (G, (U_{\alpha})_{\alpha \in \Phi})$ is said to be \textit{over $\FF_2$} if every root group has cardinality $2$.

Let $\mathcal{D} = (G, (U_{\alpha})_{\alpha \in \Phi})$ be an RGD-system of type $(W, S)$ and let $H = \bigcap_{\alpha \in \Phi} N_G(U_{\alpha})$, $B_{\epsilon} = H \langle U_{\alpha} \mid \alpha \in \Phi_{\epsilon} \rangle$ for $\epsilon \in \{+, -\}$. It follows from \cite[Theorem $8.80$]{AB08} that there exists an \textit{associated} twin building $\Delta(\mathcal{D}) = (\Delta(\mathcal{D})_+, \Delta(\mathcal{D})_-, \delta_*)$ of type $(W, S)$ such that $\Delta(\mathcal{D})_{\epsilon} = ( G/B_{\epsilon}, \delta_{\epsilon} )$ for $\epsilon \in \{ +, - \}$ and $G$ acts on $\Delta(\mathcal{D})$ by multiplication from the left. There is a distinguished pair of opposite chambers in $\Delta(\mathcal{D})$ corresponding to the subgroups $B_{\epsilon}$ for $\epsilon \in \{+, -\}$. We will denote this pair by $(c_+, c_-)$.

\begin{example}\label{exampleKM444}
	Let $\mathcal{D} = (\mathcal{G}, (U_{\alpha})_{\alpha \in \Phi})$ be the RGD-system associated with the split Kac-Moody group of type $(4, 4, 4)$ over $\FF_2$ (for the definition of Kac-Moody groups we refer to \cite{Ti87}). Then $\mathcal{D}$ is over $\FF_2$. Let $\{ \alpha, \beta \}$ be a prenilpotent pair. We will determine the commutator relations $[u_{\alpha}, u_{\beta}] \leq \langle U_{\gamma} \mid \gamma \in (\alpha, \beta) \rangle$. For $o(r_{\alpha} r_{\beta}) < \infty$, the commutator relations follow from \cite[Example $3.4$]{BiRGD}. For $o(r_{\alpha} r_{\beta}) = \infty$ we use the functoriality of Kac-Moody groups: Let $( \mathcal{G}, (\phi_i)_{i\in I}, \eta )$ be the system as in \cite[Ch.\ $2$]{Ti87}. For every field $\KK$ we let $U_{\alpha_i}(\KK) := \phi_i\left( \left\{ \begin{pmatrix}
		1 & k \\ 0 & 1
	\end{pmatrix} \mid k \in \KK \right\} \right)$ and $U_{-\alpha_i}(\KK) := \phi_i\left( \left\{ \begin{pmatrix}
		1 & 0 \\ k & 1
	\end{pmatrix} \mid k\in \KK \right\} \right)$ be the root groups corresponding to the simple roots. For every $i$ and any two fields $\FF$ and $\KK$ with a homomorphism $f: \FF \to \KK$ the following diagram commutes:
	\begin{center}
		\begin{tikzcd}
			\mathrm{SL}_2(\FF) \arrow[r, "\mathrm{SL}_2(f)"] \arrow[d, "\phi_i"] & \mathrm{SL}_2(\KK) \arrow[d, "\phi_i"] \\
			\mathcal{G}(\FF) \arrow[r, "\mathcal{G}(f)"] & \mathcal{G}(\KK)
		\end{tikzcd}
	\end{center}
	In particular, we have $\mathcal{G}(f)(U_{\alpha_i}(\FF)) \leq U_{\alpha_i}(\KK)$ and hence $\mathcal{G}(f)(U_{\gamma}(\FF)) \leq U_{\gamma}(\KK)$ for each root $\gamma \in \Phi$ by using (RGD$2$). Moreover, if $f$ is injective, then $\mathcal{G}(f)$ is injective by the axiom (KMG$4$) (cf.\ \cite{Ti87}). Let $f: \FF_2 \to \FF_4$ be the canonical inclusion. We have $[U_{\alpha}(\FF_4), U_{\beta}(\FF_4)] = 1$ by \cite[Theorem A]{Bi22}. This implies $\mathcal{G}(f)\left( [U_{\alpha}(\FF_2), U_{\beta}(\FF_2)] \right) \leq [U_{\alpha}(\FF_4), U_{\beta}(\FF_4)] = 1$ and, as $\mathcal{G}(f)$ is injective, we deduce $[U_{\alpha}(\FF_2), U_{\beta}(\FF_2)] = 1$. All in all we have the following commutator relations, where $U_{\gamma} = \langle u_{\gamma} \rangle$ for all $\gamma \in \Phi$:
	\begin{align*}
		[u_{\alpha}, u_{\beta}] = \begin{cases}
			\prod_{\gamma \in (\alpha, \beta)} u_{\gamma} & \text{if } o(r_{\alpha} r_{\beta}) < \infty, \vert (\alpha, \beta) \vert = 2 \\
			1 & \text{else.}
		\end{cases}
	\end{align*}
\end{example}

\section{RGD-systems of type $(4, 4, 4)$ over $\FF_2$}\label{Section: Action on building}

In this section we let $\mathcal{D} = (G, (U_{\alpha})_{\alpha \in \Phi})$ be an RGD-system of type $(4, 4, 4)$ over $\FF_2$. Furthermore, we let $V_{r_{\{s, t\}}} := \langle U_s, U_t \rangle \leq U_{r_{\{s, t\}}}$ for all $s \neq t \in S$. By \cite[Example $3.4$]{BiRGD} and \cite[Corollary $8.34(1)$]{AB08} this subgroup has index $2$ in $U_{r_{\{s, t\}}}$. Moreover, we let $\Delta(\mathcal{D}) = (\Delta(\mathcal{D})_+, \Delta(\mathcal{D})_-, \delta_*)$ be the twin building associated with $\mathcal{D}$ and let $(c_+, c_-)$ be the distinguished pair of opposite chambers. We denote for every $s\in S$ the unique chamber contained in $A(c_+, c_-)$ which is $s$-adjacent to $c_-$ by $c_s$. Then $U_+$ acts on $\Delta_- := \Delta(\mathcal{D})_-$. We abbreviate $c := c_-$ (for more information we refer to \cite[Section $8.9$]{AB08}).

\begin{remark}
	Note that $V_{r_{\{s, t\}}} = \{ 1, u_s, u_t, u_s u_t, u_t u_s, u_s u_t u_s, u_t u_s u_t, u_s u_t u_s u_t = u_t u_s u_t u_s \}$.
\end{remark}

\begin{lemma}\label{Lemma: Uplus acts on Deltaminus}
	The following hold:
	\begin{enumerate}[label=(\alph*)]
		\item $\ell(c_s, c_s.h) \geq 3$ for all $h \in V_{r_{\{s, t\}}} \backslash \{ 1, u_s \}$;
		
		\item $\ell(c_s, c_t.h) \geq 2$ for all $h \in V_{r_{\{s, t\}}}$;
		
		\item $\ell(c_t.h, p) \geq 2$ for all $p \in \P_t(c)$ and $h \in V_{r_{\{s, t\}}} \backslash \{ 1, u_t \}$;
		
		\item $\ell(c_s.h, p) \geq 2$ or $\delta(c_s.h, p) = s$ for all $p \in \P_t(c)$ and $h\in V_{r_{\{s, t\}}}$;
		
		\item $\ell(p, q) \geq 2$ or $\delta(p, q) = s$ for all $p \in \P_t(c.h), q \in \P_t(c)$ and $h\in V_{r_{\{s, t\}}} \backslash \{1, u_t\}$.
		
		\item $\ell(p, c_s) \geq 2$ or $\delta(p, c_s) = s$ for all $p \in \P_t(c.h)$ and $h\in V_{r_{\{s, t\}}}$.
	\end{enumerate}
\end{lemma}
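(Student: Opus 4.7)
The plan is to run a unified case analysis over the eight elements of $V_{r_{\{s,t\}}}$ listed in the preceding remark. The geometric inputs I need are: first, every panel of $\Delta_-$ contains exactly three chambers because $\mathcal{D}$ is over $\FF_2$; second, for $\alpha \in \Phi_+$ the root group $U_\alpha$ fixes pointwise the part of the twin apartment $A(c_+, c_-)$ lying in $\alpha$, so in particular $u_s$ fixes $c_+$ and $c_s$ and swaps $c$ with the third chamber $c^s := u_s\cdot c$ of $\P_s(c)$, and symmetrically $u_t$ fixes $c_+, c_t$ and swaps $c$ with $c^t := u_t\cdot c$; third, $V_{r_{\{s,t\}}}$ acts on the $\{s,t\}$-residue $R_{\{s,t\}}(c)$, which is a generalised quadrangle over $\FF_2$.

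First I would record, for each of the eight elements $h \in V_{r_{\{s,t\}}}$, the three images $h\cdot c$, $h\cdot c_s$, $h\cdot c_t$ and the panels $\P_s(h\cdot c), \P_t(h\cdot c)$. The key recursion is that $u_s$ sends $\P_t(c)$ to $\P_t(c^s)$ and $u_t$ sends $\P_s(c)$ to $\P_s(c^t)$, so a length-$k$ alternating word in $u_s, u_t$ applied to $c$ produces a chamber $d$ whose $\delta(c, d)$ is the corresponding alternating product of simple reflections; in particular $\ell(c, d) = k$ and the whole orbit lies inside $R_{\{s,t\}}(c)$. This is proved by a short induction on $k$, the base case being the definitions of $c^s, c^t$ and the inductive step being the panel-transport formulas above.

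Second, with the orbit table in hand, each of (a)--(f) becomes a finite check in the generalised quadrangle. Statements (a), (b), (c) are pure distance lower bounds: for (a) the chamber $h\cdot c_s$ lies outside $\P_s(c_s) \cup \P_t(c_s)$ whenever $h \notin \{1, u_s\}$, forcing $\ell(c_s, h\cdot c_s) \geq 3$, with the minimum attained at $h = u_t$ via the gallery $(c_s, c, c^t, u_t\cdot c_s)$; (b) and (c) follow by the same pattern from the base distance $\delta(c_s, c_t) = st$ and from the location of $\P_t(c)$. Parts (d), (e), (f) allow the additional alternative $\delta(\cdot,\cdot) = s$, which is exactly the configuration where the two chambers in question lie in a common $s$-panel inside the residue; the orbit table confirms that in every non-excluded case one of the two alternatives holds. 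The excluded elements $\{1, u_s\}$ or $\{1, u_t\}$ in each statement are precisely those that fix the relevant base chamber or panel and would violate the inequality trivially.

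The main obstacle is the bookkeeping across the $6 \times 8$ pairs of statements and group elements, and in particular the need to verify that the ``$\delta(\cdot,\cdot) = s$'' alternatives in (d)--(f) capture all of the genuinely close configurations produced by the action. However, the entire argument takes place inside the single rank-$2$ residue $R_{\{s,t\}}(c)$, so once the orbit table has been drawn up the six inequalities follow by direct inspection, with no further input from the ambient rank-$3$ building beyond thickness.
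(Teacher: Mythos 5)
Your overall plan --- tabulating the images of $c$, $c_s$, $c_t$ under all eight elements of $V_{r_{\{s,t\}}}$ inside the rank-$2$ residue $R_{\{s,t\}}(c)$ and then reading off gallery distances --- is exactly what the paper does: the figure in the paper's proof is precisely such an orbit table drawn as a strip of triangles, and the fact you single out, that $u_s$ fixes $c_s$ and $u_t$ fixes $c_t$ while the root groups act freely on the remaining two chambers of the relevant panels, is encoded there via the codistance-$1$ characterization of the upper chambers. So the decomposition and the key structural inputs coincide with the paper's.

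There is, however, a genuine logical slip in your justification for part (a). You assert that $h\cdot c_s$ lying outside $\P_s(c_s)\cup\P_t(c_s)$ ``forces'' $\ell(c_s, h\cdot c_s)\ge 3$. It does not: that condition only says $h\cdot c_s$ is not adjacent to $c_s$, i.e.\ $\ell(c_s, h\cdot c_s)\ge 2$. Chambers of $R_{\{s,t\}}(c)$ with $\delta(c_s,\cdot)\in\{st,ts\}$ also lie outside those two panels, so the possibility $\ell=2$ is not excluded by that observation alone. To get the required $\ge 3$ you must actually read the $\delta$-values off the orbit table: the reduced-type gallery $(c_s, c, c^t, u_t c_s)$ of type $(s,t,s)$ you exhibit does give $\delta(c_s, u_t c_s)=sts$ exactly, and the remaining elements of $V_{r_{\{s,t\}}}\setminus\{1,u_s\}$ are handled using that $u_s$ stabilizes $c_s$ so that $h$ and $hu_s$ give the same image, together with one more panel-transport step for $h\in\{u_tu_su_t, u_su_tu_su_t\}$ --- but this is a computation, not a consequence of the single ``outside the two panels'' statement. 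With this corrected, your argument recovers the paper's case check verbatim; the approach is the same, it just needs the table to actually be consulted for (a) rather than a too-quick panel argument.
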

\begin{proof}
	Before we prove the claim, we consider the following diagram, where the lower chambers are all opposite to $c_+$, the upper chambers $d$ satisfy $\ell(\delta_{\star}(c_+, d)) = 1$ and the letter in the triangles denotes the type of the panels:
	\begin{figure}[h]
		\begin{tikzpicture}[scale=0.95]
			\node[label=above:$c_t.u_su_tu_s$, circle, scale=0.5, fill=black] (t1sts) at ($(90:1)$) {};
			\node[label=below:$c.u_tu_su_tu_s$, circle, scale=0.5, fill=black] (tsts) at ($({120+90}:1)$) {};
			\node[label=below:$c.u_su_tu_s$, circle, scale=0.5, fill=black] (sts) at ($({2*120+90}:1)$) {};
			
			\node[label=above:$c_s.u_tu_s$, circle, scale=0.5, fill=black] (s1ts) at ($(90:1)+({2*cos(30)},0)$) {};
			\node[label=below:$c.u_tu_s$, circle, scale=0.5, fill=black] (ts) at ($({2*120+90}:1)+({2*cos(30)},0)$) {};
			
			\node[label=above:$c_t.u_s$, circle, scale=0.5, fill=black] (t1s) at ($(90:1)+({4*cos(30)},0)$) {};
			\node[label=below:$c.u_s$, circle, scale=0.5, fill=black] (s) at ($({2*120+90}:1)+({4*cos(30)},0)$) {};
			
			\node[label=above:$c_s$, circle, scale=0.5, fill=black] (s1) at ($(90:1)+({6*cos(30)},0)$) {};
			\node[label=below:$c$, circle, scale=0.5, fill=black] (c) at ($({2*120+90}:1)+({6*cos(30)},0)$) {};
			
			\node[label=above:$c_t$, circle, scale=0.5, fill=black] (t1) at ($(90:1)+({8*cos(30)},0)$) {};
			\node[label=below:$c.u_t$, circle, scale=0.5, fill=black] (t) at ($({2*120+90}:1)+({8*cos(30)},0)$) {};
			
			\node[label=above:$c_s.u_t$, circle, scale=0.5, fill=black] (s1t) at ($(90:1)+({10*cos(30)},0)$) {};
			\node[label=below:$c.u_su_t$, circle, scale=0.5, fill=black] (st) at ($({2*120+90}:1)+({10*cos(30)},0)$) {};
			
			\node[label=above:$c_t.u_su_t$, circle, scale=0.5, fill=black] (t1st) at ($(90:1)+({12*cos(30)},0)$) {};
			\node[label=below:$c.u_tu_su_t$, circle, scale=0.5, fill=black] (tst) at ($({2*120+90}:1)+({12*cos(30)},0)$) {};
			
			\node[label=above:$c_s.u_tu_su_t$, circle, scale=0.5, fill=black] (s1tst) at ($(90:1)+({14*cos(30)},0)$) {};
			\node[label=below:$c.u_su_tu_su_t$, circle, scale=0.5, fill=black] (stst) at ($({2*120+90}:1)+({14*cos(30)},0)$) {};
				
			\node at (0,0) {$t$};
			\node at ({2*cos(30)},0) {$s$};
			\node at ({4*cos(30)},0) {$t$};
			\node at ({6*cos(30)},0) {$s$};
			\node at ({8*cos(30)},0) {$t$};
			\node at ({10*cos(30)},0) {$s$};
			\node at ({12*cos(30)},0) {$t$};
			\node at ({14*cos(30)},0) {$s$};
			
			\draw (tsts) -- (sts);
			\draw (sts) -- (t1sts);
			\draw (t1sts) -- (tsts);
			
			\draw (sts) -- (ts);
			\draw (ts) -- (s1ts);
			\draw (s1ts) -- (sts);
			
			\draw (ts) -- (s);
			\draw (s) -- (t1s);
			\draw (t1s) -- (ts);
			
			\draw (s) -- (c);
			\draw (c) -- (s1);
			\draw (s1) -- (s);
			
			\draw (c) -- (t);
			\draw (t) -- (t1);
			\draw (t1) -- (c);
			
			\draw (t) -- (st);
			\draw (st) -- (s1t);
			\draw (s1t) -- (t);
			
			\draw (st) -- (tst);
			\draw (tst) -- (t1st);
			\draw (t1st) -- (st);
			
			\draw (tst) -- (stst);
			\draw (stst) -- (s1tst);
			\draw (s1tst) -- (tst);			
		\end{tikzpicture}
	\end{figure}
	We first show Assertion $(a)$. As $c_s = c_s.u_s$ and $u_tu_su_tu_s = u_su_tu_su_t$, we can assume $h \in \{ u_t, u_t u_s, u_t u_s u_t \}$. Now we deduce the following:
	\begin{enumerate}[label=(\roman*)]
		\item $\ell(c_s, c_s.u_t) = 3$;
		
		\item $\ell(c_s, c_s.u_tu_s) = \ell(c_s, c_s.u_t) = 3$;
		
		\item $\ell(c_s, c_s.u_t u_s u_t) \geq 3$.
	\end{enumerate}
	To show Assertion $(b)$ we can assume $h \in \{ 1, u_s, u_su_t, u_su_tu_s \}$. We deduce the following:
	\begin{enumerate}[label=(\roman*)]
		\item $\ell(c_s, c_t) = 2$;
		
		\item $\ell(c_s, c_t.u_s) = \ell(c_s, c_t) = 2$;
		
		\item $\ell(c_s, c_t.u_s u_t) = 4$;
		
		\item $\ell(c_s, c_t.u_s u_t u_s) = \ell(c_s, c_t. u_s u_t) = 4$
	\end{enumerate}
	For Assertion $(c)$ we can assume $h \in \{ u_s, u_s u_t, u_s u_t u_s \}$. We deduce the following:
	\begin{enumerate}[label=(\roman*)]
		\item $\ell(c_t.u_s, p) \in \{2, 3\}$;
		
		\item $\ell(c_t.u_su_t, p) \in \{2, 3\}$;
		
		\item $\ell(c_t.u_s u_t u_s, p) \in \{3, 4\}$;
	\end{enumerate}
	For Assertion $(d)$ we can assume $h\in \{ 1, u_t, u_t u_s, u_t u_s u_t \}$. We deduce the following:
	\begin{enumerate}[label=(\roman*)]
		\item $\delta(c_s, p) \in \{s, st\}$;
		
		\item $\delta(c_s.u_t, p) \in \{s, st\}$;
		
		\item $\ell(c_s.u_tu_s, p) \geq 3$;
		
		\item $\ell(c_s.u_t u_s u_t, p) \geq 3$.
	\end{enumerate}
	For Assertion $(e)$ we can assume $h \in \{ u_s, u_s u_t, u_s u_t u_s \}$, as $\P_t(c.u_t) = \P_t(c)$. We deduce the following:
	\begin{enumerate}[label=(\roman*)]
		\item $h=u_s$: We have $\delta(p, q) = s$ or $\ell(p, q) \in \{ 2, 3 \}$.
		
		\item $h = u_s u_t$: This follows similar as in the case $h=u_s$.
		
		\item $h=u_s u_t u_s$: Then we have $\ell(p, q) \geq 3$.
	\end{enumerate}
	For Assertion $(f)$ we can assume $h \in \{ 1, u_s, u_su_t, u_s u_t u_s \}$. We deduce the following:
	\begin{enumerate}[label=(\roman*)]
		\item $h = 1$: Then $\delta(p, c_s) = s$ or $\ell(p, c_s) = 2$.
		
		\item $h = u_s$: Again we obtain $\delta(p, c_s) = s$ or $\ell(p, c_s) = 2$.
		
		\item $h = u_s u_t$: In this case we have $\ell(p, c_s) \geq 3$.
		
		\item $h = u_s u_t u_s$: Again we obtain $\ell(p, c_s) \geq 3$. \qedhere
	\end{enumerate}
\end{proof}

\begin{remark}
	For each root $\alpha \in \Phi_+$ there exist $w\in W, s\in S$ with $\alpha = w\alpha_s$. For short we will write $u_{ws}$ to be the generator of $U_{w\alpha_s}$.
\end{remark}

\begin{lemma}\label{Lemma: Normal form}
	Let $n>0$, let $g_1, \ldots, g_n \in \{ u_{sr}, u_{tr}, u_{rt}, u_{rt} u_{tr} \}$ and let $h_1, \ldots, h_n \in V_{r_{\{s, t\}}}$ be such that $h_i \notin \{ 1, u_s\}$ if $g_i = g_{i+1} = u_{sr}$ and such that $h_i \notin \{ 1, u_t \}$ if $g_i, g_{i+1} \in \{ u_{tr}, u_{rt}, u_{rt} u_{tr} \}$. Then $g_1 h_1 \cdots g_n h_n \neq 1$ holds in $G$.
\end{lemma}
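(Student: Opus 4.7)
The plan is to prove the lemma by induction on $n$, tracking the action of the partial products on a reference chamber in the negative building $\Delta_-$. Since the group $U_+$ acts simply transitively on the set of chambers of $\Delta_-$ opposite to $c_+$, and this orbit contains $c = c_-$, it suffices to show that $c.(g_1 h_1 \cdots g_n h_n) \neq c$.

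The base case $n=1$ is short: each of the four possibilities for $g_1$ lies in a root group (or in a product of two root groups) associated to roots in $\{s\alpha_r, t\alpha_r, r\alpha_t\}$, none of which contributes to $V_{r_{\{s,t\}}} = \langle U_s, U_t \rangle$; hence $g_1 \notin V_{r_{\{s,t\}}}$ and so $g_1 h_1 \neq 1$.

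For the inductive step, I would set $c^{(i)} := c.(g_1 h_1 \cdots g_i h_i)$ and prove a strengthened inductive hypothesis pinning down the position of $c^{(i)}$ in $\Delta_-$ — for instance, specifying the type of a minimal gallery from $c$ to $c^{(i)}$ and recording the images of one of the reference chambers $c_s$, $c_t$ or of the panel $\P_t(c)$ under the partial product. Each generator $g_i$ corresponds to crossing a particular wall close to $c$: $u_{sr}$ crosses $\partial(s\alpha_r)$, whose trace in the apartment passes through the $r$-panel of $c_s$, while $u_{tr}, u_{rt}, u_{rt}u_{tr}$ involve walls meeting the apartment near $c_t$ or $\P_t(c)$. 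The subsequent application of $h_i \in V_{r_{\{s,t\}}}$ either fixes the relevant reference chamber or displaces it by an amount quantified precisely by Lemma \ref{Lemma: Uplus acts on Deltaminus}.

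The induction step then splits into cases according to the consecutive pair $(g_i, g_{i+1})$. When $g_i = g_{i+1} = u_{sr}$, the restriction $h_i \notin \{1, u_s\}$ combined with parts (a), (b), (d) of Lemma \ref{Lemma: Uplus acts on Deltaminus} guarantees that $g_{i+1}$ cannot undo the wall-crossing produced by $g_i$. When $g_i, g_{i+1}$ both lie in $\{u_{tr}, u_{rt}, u_{rt}u_{tr}\}$, the restriction $h_i \notin \{1, u_t\}$ combined with parts (c), (e), (f) of Lemma \ref{Lemma: Uplus acts on Deltaminus} plays the analogous role. When $g_i$ and $g_{i+1}$ belong to different families, the walls they cross are automatically distinct and no further restriction on $h_i$ is needed. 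In each case the strengthened hypothesis is preserved, and in particular $c^{(n)} \neq c$.

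The principal obstacle is identifying the right strengthened inductive hypothesis — one rich enough that the various assertions of Lemma \ref{Lemma: Uplus acts on Deltaminus} can be plugged in at each step, yet simple enough to propagate through the induction. A secondary difficulty is the bookkeeping of consecutive-pair configurations, making sure that every case falls under an appropriate assertion of Lemma \ref{Lemma: Uplus acts on Deltaminus}; this is essentially the purpose for which the six parts of that lemma were proved.
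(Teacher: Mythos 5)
Your high-level plan coincides with the paper's: act on $\Delta_-$, induct on $n$, and use the six parts of Lemma \ref{Lemma: Uplus acts on Deltaminus} to rule out cancellation between consecutive factors. But you flag the formulation of the strengthened inductive hypothesis as the ``principal obstacle'' and then leave it open — and that is precisely where all the work lives. Without it, what you have is an outline, not a proof.

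The invariant the paper proves is the following, for $g = g_1 h_1 \cdots g_n h_n$ and $R_{st} := R_{\{s,t\}}(c)$. If $g_n = u_{fr}$ with $f \in \{s,t\}$, then $\proj_{R_{st}}(c.g) = c_f.h_n$ and $\ell(c.g, \proj_{R_{st}}(c.g)) > 0$. If $g_n \in \{u_{rt}, u_{rt}u_{tr}\}$, then $\proj_{R_{st}}(c.g) = \proj_{\P_t(c.h_n)}(c.g)$, together with $\ell(c.g,\proj_{R_{st}}(c.g)) > 0$ \emph{and} the extra length condition $\ell(\delta(c.g, \proj_{R_{st}}(c.g))\,srs) = \ell(c.g, \proj_{R_{st}}(c.g)) + 3$. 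That last condition is not something one would guess from the statement; it is exactly what is needed in the inductive step to control how the wall crossed by the next $g_{n+1}$ interacts with the current position. The inductive step itself is also more delicate than your sketch suggests: one shows $\ell(c.h, \proj_{R_{st}.g_n}(c.h)) > \ell(c.h, \proj_{R_{st}}(c.h))$ where $h = g_1 h_1 \cdots g_{n-1} h_{n-1}$, comparing projections onto two different $\{s,t\}$-residues, and this comparison is where each part of Lemma \ref{Lemma: Uplus acts on Deltaminus} and Lemmas \ref{wordsincoxetergroup} and \ref{Lemma: not both down} gets invoked.

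Two smaller issues. First, your base case only establishes $g_1 h_1 \neq 1$; it does not establish the strengthened invariant, which the induction step must be fed. The paper's base case computes $\proj_{R_{st}}(c.g_1)$ explicitly using \cite[Lemma $2.15$]{AB08} and checks the extra length condition when $g_1 \in \{u_{rt}, u_{rt}u_{tr}\}$. Second, the argument ``$g_1 \notin V_{r_{\{s,t\}}}$, hence $g_1 h_1 \neq 1$'' is correct in conclusion but not self-contained: it tacitly appeals to the structure of $U_+$ (e.g.\ uniqueness of the decomposition $U_{r_{\{s,t\}}} \cdot U_{r\alpha_t} \cdot U_{t\alpha_r} \cdot \ldots$) rather than to the geometric invariant you actually need to carry forward.

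In short: right strategy, right auxiliary lemma, but the core content — the precise inductive invariant and the case analysis that propagates it — is missing, and you acknowledge as much. To turn this into a proof you would essentially have to reconstruct the paper's invariant above.
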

\begin{proof}
	Note that $g_n^{-1} = g_n$. For the proof we consider the action of $U_+$ on $\Delta_-$ as in Lemma \ref{Lemma: Uplus acts on Deltaminus}. We abbreviate $\delta := \delta_-, c := B_-, R_{ef} := R_{\{e, f\}}(c)$ for any $e\neq f \in S$ and we let $g := g_1 h_1 \cdots g_n h_n$. We show the following by induction on $n \geq 1$:
	\allowdisplaybreaks
	\begin{itemize}
		\item If $g_n = u_{fr}$ for some $f\in \{s, t\}$, then the following hold:
		\begin{enumerate}[label=(\alph*)]
			\item $\proj_{R_{st}} (c.g) = c_f.h_n$;
			
			\item $\ell(c.g, \proj_{R_{st}} (c.g)) >0$.
		\end{enumerate}
		
		\item If $g_n \in \{ u_{rt}, u_{rt} u_{tr} \}$, then the following hold:
		\begin{enumerate}[label=(\alph*)]
			\item $\proj_{R_{st}} (c.g) = \proj_{\P_t(c.h_n)} (c.g)$;
			
			\item $\ell(\delta(c.g, \proj_{R_{st}} (c.g))srs) = \ell(c.g, \proj_{R_{st}} (c.g)) +3$;
			
			\item $\ell(c.g, \proj_{R_{st}} (c.g)) >0$.
		\end{enumerate}
	\end{itemize}
	Once this is shown, the claim follows, since $g=1$ would imply $\ell(c.g, \proj_{R_{st}} (c.g)) = 0$. Let $n=1$ and suppose $g_1 \in \{ u_{sr}, u_{tr} \}$. Then we have $\proj_{R_{st}} (c.u_{fr}) = c_f$ by \cite[Lemma $2.15$]{AB08} and, in particular, $\proj_{R_{st}} (c.g) = \left( \proj_{R_{st}} (c.g_1) \right).h_1 = c_f.h_1$. Moreover, we have $\ell(c.g, \proj_{R_{st}}(c.g)) = \ell(c.g, c_f.h_1) = \ell(c.g_1, c_f) >0$. Now we suppose $g_1 \in \{ u_{rt}, u_{rt} u_{tr} \}$. Note that $\delta(c, c.u_{rt} u_{tr}) = \delta(c, c.u_r u_t u_r u_t) = r_{\{r, t\}}$ and $\delta(c, c.u_{rt}) = rtr$. In particular, we have $\delta(c.g_1, c) \in \{ rtr, r_{\{r, t\}} \}$. Let $q = \proj_{\P_t(c)} (c.g_1)$. Then $\delta(c.g_1, q) = rtr$ and, by \cite[Lemma $2.15$]{AB08}, $\ell(\delta(c.g_1, q)s) = \ell(\delta(c.g_1, q)) +1$. Hence $q = \proj_{R_{st}} (c.g_1)$. This implies $\proj_{R_{st}} (c.g) = q.h_1 = \proj_{\P_t(c.h_1)} (c.g)$. Since $\delta(c.g_1, q) = rtr$, we infer (again by \cite[Lemma $2.15$]{AB08}) $\proj_{\P_r(q)} (c.g_1) = \proj_{R_{\{s, r\}}(q)} (c.g_1)$. Thus we have $\delta(\proj_{R_{\{r, s\}}(q)} (c.g_1), q) =r$ and hence $\ell(\delta(c.g, q.h_1)srs) = \ell(\delta(c.g_1, q)srs) = \ell(c.g_1, q) +3 = \ell(c.g, q.h_1) +3$. Moreover, $\ell(c.g, \proj_{R_{st}}(c.g)) = \ell(c.g_1, q) >0$.
	
	Now we assume $n>1$. We define $h := g_1 h_1 \cdots g_{n-1} h_{n-1}$. In both cases we will show $\ell(c.h, \proj_{R_{st}.g_n} (c.h)) > \ell(c.h, \proj_{R_{st}} (c.h))$ instead of $\ell(c.g, \proj_{R_{st}} (c.g)) >0$, because it implies $\ell(c.g, \proj_{R_{st}} (c.g)) = \ell(c.h, \proj_{R_{st}.g_n} (c.h)) > \ell(c.h, \proj_{R_{st}} (c.h))>0$ by induction. We distinguish the following cases, where the first case is a special case which we will use in the other cases:
	\begin{enumerate}[label=(\alph*)]
		\item\label{Case1} $g_n \in \{ u_{rt}, u_{rt} u_{tr} \}$ and $\proj_{\P_t(c)} (c.h) = \proj_{R_{rt}} (c.h)$: As above, we deduce $\delta(c, c.g_n) \in \{rtr, r_{\{r, t\}} \}$. We define $p := \proj_{\P_t(c)} (c.h) = \proj_{R_{rt}} (c.h)$. As $p \in \P_t(c)$, we deduce $\delta(p, c.g_n) \in \{ rtr, r_{\{r, t\}} \}$. We define $q := \proj_{\P_t(c.g_n)} (c.h)$ and note that $q \in R_{rt}$. Then $q = \proj_{\P_t(c.g_n)} \proj_{R_{rt}} (c.h)$, $\delta(p, q) = rtr$ and Lemma \ref{wordsincoxetergroup} implies $\ell(\delta(c.h, q)s) = \ell(c.h, q) +1$. Thus $q = \proj_{R_{\{s, t\}}(c.g_n)} (c.h) = \proj_{R_{st}.g_n} (c.h)$ and hence $\proj_{\P_t(c.h_n)} (c.g) = q.g_nh_n = \proj_{R_{st}} (c.g)$. Since $\delta(p, q) = rtr$ and $p \in \P_t(c) \subseteq R_{st}$, we deduce
		\allowdisplaybreaks
		\begin{align*}
			\ell(c.h, \proj_{R_{st}.g_n} (c.h)) = \ell(c.h, q) \overset{q\in R_{rt}}{=} \ell(c.h, p) +3 \overset{p \in R_{st}}{>} \ell(c.h, \proj_{R_{st}} (c.h)).
		\end{align*}
		Moreover, as $\ell(p, \proj_{\P_r(q)} (c.h)) = \ell(p, \proj_{\P_r(q)} p) = 2$, Lemma \ref{wordsincoxetergroup} implies that $\proj_{\P_r(q)} (c.h) = \proj_{R_{\{r, s\}}(q)} (c.h)$ and hence $\ell(\delta(c.g, q.g_nh_n)srs) = \ell(\delta(c.h, q)srs) = \ell(c.h, q) +3 = \ell(c.g, q.g_nh_n) +3$.
		
		\item $g_{n-1} = u_{fr}$ for some $f \in \{s, t\}$: Then we have $\proj_{R_{st}} (c.h) = c_f.h_{n-1}$ by induction. We distinguish the following two cases:
		\begin{enumerate}[label=(\roman*)]
			\item $g_n \notin \{ u_{rt}, u_{rt} u_{tr} \}$: Then there exists $e\in \{s, t\}$ with $g_n = u_{er}$. If $e=f$, we have $h_{n-1} \notin \{ 1, u_f \}$ by assumption and $\ell(c_f.h_{n-1}, c_e) \geq 3$ by Lemma \ref{Lemma: Uplus acts on Deltaminus}$(a)$. If $e \neq f$, we have $\ell(c_f.h_{n-1}, c_e) \geq 2$ by Lemma \ref{Lemma: Uplus acts on Deltaminus}$(b)$. Note that in both cases we have $\delta(c_f.h_{n-1}, c_e) \in \langle s, t \rangle$. Using Lemma \ref{wordsincoxetergroup} we obtain $\ell(\delta(c.h, c_e)ru) = \ell(\delta(c.h, c_f.h_{n-1}) \delta(c_f.h_{n-1}, c_e) ru ) = \ell(c.h, c_f.h_{n-1}) + \ell(c_f.h_{n-1}, c_e) +2 = \ell(c.h, c_e) +2$ for each $u \in \{s, t\}$. Since $\delta(c_e, c_e.u_{er}) = r$, the previous computations imply that $c_e.u_{er} = \proj_{R_{\{s, t\}}(c_e.u_{er})} (c.h) = \proj_{R_{st}.u_{er}} (c.h)$ and hence $c_e.h_n = \proj_{R_{st}} (c.g)$. In particular, we have $\ell(c.h, \proj_{R_{st}.g_n} (c.h)) = \ell(c.h, c_e.u_{er}) = \ell(c.h, c_e) +1 > \ell(c.h, c_f.h_{n-1}) = \ell(c.h, \proj_{R_{st}} (c.h))$.
			
			\item $g_n \in \{ u_{rt}, u_{rt} u_{tr} \}$: We define $p := \proj_{\P_t(c)} (c.h)$. If $f=t$, we have $h_{n-1} \notin \{1, u_t\}$ and hence $\ell(c_t.h_{n-1}, p) \geq 2$ by Lemma \ref{Lemma: Uplus acts on Deltaminus}$(c)$. By Lemma \ref{wordsincoxetergroup} we obtain that $\ell(\delta(c.h, p)r) = \ell(c.h, p) +1$ and hence $p = \proj_{R_{rt}} (c.h)$. The claim follows now from Case \ref{Case1}. If $f=s$, Lemma \ref{Lemma: Uplus acts on Deltaminus}$(d)$ yields $\ell(c_s.h_{n-1}, p) \geq 2$ or $\delta(c_s.h_{n-1}, p) = s$. If $\ell(c_s.h_{n-1}, p) \geq 2$, we obtain $p = \proj_{R_{rt}} (c.h)$ as before and the claim follows again from Case \ref{Case1}. Thus we suppose that $\delta(c_s.h_{n-1}, p) = s$. Note that we have $\delta(c, c.u_{rt} u_{tr}) = r_{\{r, t\}}$ and $\delta(c, c.u_{rt}) = rtr$. In particular, we have $\delta(p, c.g_n) \in \{ rtr, r_{\{r, t\}} \}$. If $\ell(\delta(c.h, p)r) = \ell(c.h, p) +1$, we have $p = \proj_{R_{rt}} (c.h)$ and the claim follows as before. Thus we assume $\ell(\delta(c.h, p)r) = \ell(c.h, p) -1$. Then $\ell(\delta(c.h, p)rt) = \ell(c.h, p)$ by Lemma \ref{Lemma: not both down} and hence $\ell(wu) = \ell(w) +1$ for $w = \delta(c.h, p)r$ and each $u\in \{r, t\}$. Since $p \in \P_t(c) \subseteq R_{rt}$ and hence $\P_r(p) \subseteq R_{rt}$, we infer $\proj_{\P_r(p)} (c.h) = \proj_{R_{rt}} (c.h)$. We define $q := \proj_{\P_t(c.g_n)} (c.h)$. Since $\delta(p, c.g_n) \in \{ rtr, r_{\{r, t\}} \}$ we have $\ell(\proj_{R_{rt}} (c.h), q) \geq 2$ and hence $\ell(c.h, q) = \ell(c.h, \proj_{R_{rt}} (c.h)) +2 > \ell(c.h, \proj_{R_{rt}} (c.h)) +1 = \ell(c.h, p)$. Lemma \ref{wordsincoxetergroup} implies $q = \proj_{R_{st}(c.g_n)} (c.h) = \proj_{R_{st}.g_n} (c.h)$ and hence, as $p \in R_{st}$, we deduce
			\allowdisplaybreaks
			\begin{align*}
				\ell(c.h, \proj_{R_{st}.g_n} (c.h)) = \ell(c.h, q) > \ell(c.h, p) > \ell(c.h, \proj_{R_{st}} (c.h)).
			\end{align*}
			Moreover, we have $\proj_{\P_t(c.h_n)} (c.g) = q.g_nh_n = \proj_{R_{st}} (c.g)$. We have already mentioned that $\ell(\proj_{R_{rt}} (c.h), q) \geq 2$. Using Lemma \ref{Lemma: not both down}, Lemma \ref{wordsincoxetergroup} and the fact that $\ell(\delta(c.h, \proj_{R_{rt}} (c.h))rs) = \ell(c.h, \proj_{R_{rt}} (c.h))$, we deduce $\ell(\delta(c.h, z)s) = \ell(c.h, z)+1$ for all $z \in R_{rt} \backslash \P_r(p)$. Note that $\P_r( \proj_{R_{rt}} (c.h) ) = \P_r(p)$. In particular, as $\delta(\proj_{R_{rt}} (c.h), \proj_{\P_r(q)} (c.h)) \in \{t, rt\}$, we deduce $\proj_{\P_r(q)} (c.h) \in R_{rt} \backslash \P_r(p)$. Thus we have $\proj_{R_{\{r, s\}}(q)} (c.h) = \proj_{\P_r(q)} (c.h)$ and hence $\ell(\delta(c.g, q.g_nh_n)srs) = \ell(\delta(c.h, q)srs) = \ell(c.h, q) +3 = \ell(c.g, qg_nh_n) +3$.
		\end{enumerate}
		
		\item $g_{n-1} \in \{ u_{rt}, u_{rt} u_{tr} \}$: We define $p := \proj_{R_{st}} (c.h)$. Using induction, we have $p = \proj_{\P_t(c.h_{n-1})} (c.h)$ and $\ell(\delta(c.h, p)srs) = \ell(c.h, p) +3$. We distinguish the following three cases:
		\begin{enumerate}[label=(\roman*)]
			\item $g_n = u_{tr}$: Then we have $h_{n-1} \notin \{ 1, u_t \}$ by assumption. As $h_{n-1}^{-1} \notin \{1, u_t\}$ and $p.h_{n-1}^{-1} \in \P_t(c)$, Lemma \ref{Lemma: Uplus acts on Deltaminus}$(c)$ yields $\ell(p, c_t) = \ell(p.h_{n-1}^{-1}, c_t.h_{n-1}^{-1}) \geq 2$. Using Lemma \ref{wordsincoxetergroup} we obtain $\ell(\delta(c.h, c_t)ru) = \ell(c.h, c_t) +2$ for each $u\in \{s, t\}$ and hence $c_t.u_{tr} = \proj_{R_{st}.u_{tr}} (c.h)$, as $\delta(c_t, c_t.u_{tr}) = r$. This implies $c_t.h_n = \proj_{R_{st}} (c.g)$. Moreover, we have $\ell(c.h, \proj_{R_{st}.g_n} (c.h)) = \ell(c.h, c_t.g_n) > \ell(c.h, p) = \ell(c.h, \proj_{R_{st}} (c.h))$.
			
			\item $g_n \in \{ u_{rt}, u_{rt} u_{tr} \}$: Then we have $h_{n-1} \notin \{ 1, u_t \}$ by assumption. We define $q := \proj_{\P_t(c)} (c.h)$. Using Lemma \ref{Lemma: Uplus acts on Deltaminus}$(e)$ we have either $\ell(p, q) \geq 2$ or $\delta(p, q) = s$. If $\ell(p, q) \geq 2$, we obtain $q = \proj_{R_{rt}} (c.h)$ by Lemma \ref{wordsincoxetergroup}. Now suppose $\delta(p, q) = s$. Note that we have $\ell(\delta(c.h, p)srs) = \ell(c.h, p) +3$ by induction and hence $\ell(\delta(c.h, q)r) = \ell(c.h, q) +1$. In particular, $q = \proj_{R_{rt}} (c.h)$. Both cases yield $q = \proj_{R_{rt}} (c.h)$ and the claim follows from Case \ref{Case1}.
			
			\item $g_n = u_{sr}$: Using Lemma \ref{Lemma: Uplus acts on Deltaminus}$(f)$ we have either $\ell(p, c_s) \geq 2$ or $\delta(p, c_s) = s$. If $\ell(p, c_s) \geq 2$, we obtain $\ell(\delta(c.h, c_s)ru) = \ell(c.h, c_s) +2$ for each $u \in \{s, t\}$ by Lemma \ref{wordsincoxetergroup} and hence $c_s.u_{sr} = \proj_{R_{st}.u_{sr}} (c.h)$. This implies $c_s.h_n = \proj_{R_{st}} (c.g)$ as well as $\ell(c.h, \proj_{R_{st}.g_n} (c.h)) = \ell(c.h, c_s.g_n) > \ell(c.h, p) = \ell(c.h, \proj_{R_{st}} (c.h))$.
			
			Suppose now that $\delta(p, c_s) = s$. By induction we have $\ell(\delta(c.h, p)srs) = \ell(c.h, p) +3$. By Lemma \ref{wordsincoxetergroup} we obtain $\ell(\delta(c.h, p)srt) = \ell(c.h, p) +3$. Since $\delta(p, c_s) = s$ and $\delta(c_s, c_s.u_{sr}) = r$, we have $\delta(p, c_s.u_{sr}) = sr$ and $c_s.u_{sr} = \proj_{R_{st}.u_{sr}} (c.h)$. This implies $c_s.h_n = \proj_{R_{st}} (c.g)$ and, in particular, that $\ell(c.h, \proj_{R_{st}.g_n} (c.h)) = \ell(c.h, c_s.g_n) > \ell(c.h, p) = \ell(c.h, \proj_{R_{st}} (c.h))$. \qedhere
		\end{enumerate}
	\end{enumerate}
\end{proof}

\begin{remark}
	Let $G_1, G_2, G_3, H_1, H_2$ be groups and let $\alpha_i: H_i \to G_i$ and $\omega_i: H_i \to G_{i+1}$ be monomorphisms for all $i \in \{1, 2\}$. Then we denote by $G_1 \star_{H_1} G_2 \star_{H_2} G_3$ the corresponding tree product. For more information we refer to Section \ref{Subsection: Graph of groups}.
\end{remark}

\begin{theorem}\label{theoremsubgroupKM}
	The canonical homomorphism $\phi: U_{sr} \star_{U_s} V_{r_{\{s, t\}}} \star_{U_t} U_{trt} \to G$ is injective.
\end{theorem}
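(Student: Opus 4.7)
The plan is to apply Bass-Serre normal form to the tree product $T := U_{sr} \star_{U_s} V_{r_{\{s,t\}}} \star_{U_t} U_{trt}$ and reduce the injectivity claim to Lemma \ref{Lemma: Normal form}. Since $[U_{sr}:U_s]=2$ and $[U_{trt}:U_t]=4$, I would pick as right transversals of the edge groups the sets $\{1, u_{sr}\}$ and $\{1, u_{tr}, u_{rt}, u_{rt}u_{tr}\}$; their non-identity members are precisely the allowed elements $g_i$ in Lemma \ref{Lemma: Normal form}. Standard Bass-Serre theory then ensures that every element of $T$ can be represented as
\[
x = h_0 \cdot g_1 h_1 g_2 h_2 \cdots g_n h_n,
\]
with $n \geq 0$, $h_0, h_n \in V_{r_{\{s,t\}}}$ (possibly $1$), each $g_i \in \{u_{sr}, u_{tr}, u_{rt}, u_{rt}u_{tr}\}$, and each $h_i \in V_{r_{\{s,t\}}}$. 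Reducedness in the Bass-Serre tree of $T$ forbids immediate backtracks and translates exactly into the hypotheses $h_i \notin \{1, u_s\}$ when $g_i = g_{i+1} = u_{sr}$ and $h_i \notin \{1, u_t\}$ when $g_i, g_{i+1} \in \{u_{tr}, u_{rt}, u_{rt}u_{tr}\}$, because backtracks at the outer vertices are automatic (no chosen transversal representative lies in $U_s$ or $U_t$).

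I would then argue by contradiction. Suppose $\phi(x) = 1$. If $n = 0$, then $x = h_0 \in V_{r_{\{s,t\}}} \leq G$, so $x = 1$ because $V_{r_{\{s,t\}}}$ is a subgroup of $G$ by definition. If $n \geq 1$, set $y' := g_1 h_1 \cdots g_n (h_n h_0) = h_0^{-1} x h_0$. Then $\phi(y') = \phi(h_0)^{-1} \phi(x) \phi(h_0) = 1$. But $y'$ again has the shape required by Lemma \ref{Lemma: Normal form}: the reducedness conditions on $h_1, \ldots, h_{n-1}$ are untouched, and no condition is imposed on the rightmost slot $h_n h_0 \in V_{r_{\{s,t\}}}$. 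The lemma therefore gives $\phi(y') \neq 1$, a contradiction. Hence $\phi$ is injective.

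The main obstacle I anticipate is the bookkeeping step that identifies Bass-Serre reducedness for this particular two-edge path of groups with the combinatorial conditions of Lemma \ref{Lemma: Normal form}. Concretely, one must check that a backtrack at the central vertex $V_{r_{\{s,t\}}}$ (going from $U_{sr}$ back to $U_{sr}$, or from $U_{trt}$ back to $U_{trt}$) forces the intervening $V_{r_{\{s,t\}}}$-element out of the relevant edge group, while backtracks at the outer vertices are ruled out automatically by the choice of transversals. Once this dictionary is in place, the conjugation trick $y' = h_0^{-1} x h_0$ collapses the injectivity of $\phi$ to a single invocation of Lemma \ref{Lemma: Normal form}.
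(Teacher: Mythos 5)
Your proposal matches the paper's proof in all essential respects: both rewrite an arbitrary element of the tree product as $h_0 g_1 h_1 \cdots g_n h_n$ with $g_i$ drawn from the transversals $\{u_{sr}\}$ and $\{u_{tr}, u_{rt}, u_{rt}u_{tr}\}$, impose the no-backtrack conditions on interior $h_i$, conjugate by $h_0$, and then invoke Lemma~\ref{Lemma: Normal form}. The only difference is cosmetic: where you appeal to Bass--Serre normal form to establish that every element admits such a representation, the paper gives a short, self-contained length-reducing rewriting procedure (cases (a) and (b)) that achieves the same form without quoting the general normal form theorem.
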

\begin{proof}
	We abbreviate $H := U_{sr} \star_{U_s} V_{r_{\{s, t\}}} \star_{U_t} U_{trt}$. We note that any $g\in H$ can be written in the form $h_0 g_1 h_1 \cdots g_n h_n$, where $g_i \in \{ u_{sr}, u_{tr}, u_{rt}, u_{rt} u_{tr} \}, h_i \in V_{r_{\{s, t\}}}$ and $n\geq 0$. We reduce the product as follows:
	\begin{enumerate}[label=(\alph*)]
		\item Suppose that $g_i = g_{i+1} = u_{sr}$ and $h_i \in \{ 1, u_s \}$ for some $1 \leq i \leq n-1$. Then $g_i h_i g_{i+1} = h_i$, as $[g_i, h_i] = 1$. Thus
		\allowdisplaybreaks
		\begin{align*}
			g = h_0 g_1 h_1 \cdots g_{i-1} (h_{i-1} h_i h_{i+1}) g_{i+2} h_{h+2} \cdots g_n h_n  
		\end{align*}
		
		\item Suppose that $g_i, g_{i+1} \in \{ u_{tr}, u_{rt}, u_{rt} u_{tr} \}$ and $h_i \in \{1, u_t\}$ for some $1 \leq i \leq n-1$. Then $g_i h_i g_{i+1} = h_i g_i g_{i+1}$, as $[g_i, h_i] = 1$. We distinguish the following two cases:
		\begin{enumerate}[label=(\roman*)]
			\item $g_i = g_{i+1}$: Then we can write $g$ as before as
			\allowdisplaybreaks
			\begin{align*}
				g = h_0 g_1 h_1 \cdots g_{i-1} (h_{i-1} h_i h_{i+1}) g_{i+2} h_{i+2} \cdots g_n h_n
			\end{align*}
			
			\item $g_i \neq g_{i+1}$: Then $g_i g_{i+1} \in \{ u_{tr}, u_{rt}, u_{rt} u_{tr} \}$ and we can write $g$ as follows:
			\allowdisplaybreaks
			\begin{align*}
				g = h_0 g_1 h_1 \cdots g_{i-1} (h_{i-1} h_i) (g_i g_{i+1}) h_{i+1} \cdots g_n h_n
			\end{align*}
		\end{enumerate}
	\end{enumerate}
	In each step we reduce the length of the product. As we can only reduce finitely many times, we can not apply $(a)$ or $(b)$ at some point. In particular, any $g\in H$ can be written as $h_0 g_1 h_1 \cdots g_n h_n$, where $g_i \in \{ u_{sr}, u_{tr}, u_{rt}, u_{rt} u_{tr} \}$, $h_i \in V_{r_{\{s, t\}}}$ and if $g_i = g_{i+1} = u_{sr}$ for some $1 \leq i \leq n-1$, then $h_i \notin \{ 1, u_s \}$ and if $g_i, g_{i+1} \in \{ u_{tr}, u_{rt}, u_{rt} u_{tr} \}$ for some $1 \leq i \leq n-1$, then $h_i \notin \{1, u_t\}$.
	
	Assume that $\phi$ is not injective and let $1 \neq g \in \ker(\phi)$. Then there exist $g_i, h_i$ as before such that $g = h_0 g_1 h_1 \cdots g_nh_n$. As $V_{r_{\{s, t\}}} \cap \ker(\phi) = \{1\}$, we have $n>0$. As $\ker(\phi) \trianglelefteq H$, we have $g_1 h_1 \cdots g_n \left( h_nh_0 \right) = h_0^{-1} g h_0 \in \ker(\phi)$. But Lemma \ref{Lemma: Normal form} implies $g_1 h_1 \cdots g_n \left( h_nh_0 \right) \neq 1$ in $G$, which yields a contradiction. Thus $\phi$ is injective.
\end{proof}

\section{An application}\label{Section: Application}

\subsection*{Graphs of groups}\label{Subsection: Graph of groups}

This subsection is based on \cite[Section $2$]{KWM05} and \cite{Se79}.

Following Serre, a \textit{graph} $\Gamma$ consists of a vertex set $V\Gamma$, an edge set $E\Gamma$, the inverse function $^{-1}:E\Gamma \to E\Gamma$ and two edge endpoint functions $o: E\Gamma \to V\Gamma, t: E\Gamma \to V\Gamma$ satisfying the following axioms:
\begin{enumerate}[label=(\roman*)]
	\item The function $^{-1}$ is a fixed-point free involution on $E\Gamma$;
	
	\item For each $e\in E\Gamma$ we have $o(e) = t(e^{-1})$.
\end{enumerate}
A \textit{tree of groups} is a triple $\mathbb{G} = (T, (G_v)_{v\in V\Gamma}, (G_e)_{e\in E\Gamma})$ consisting of a finite tree $T$ (i.e.\ $VT$ and $ET$ are finite), a family of \textit{vertex groups} $(G_v)_{v\in VT}$ and a family of \textit{edge groups} $(G_e)_{e\in ET}$. Every edge $e \in ET$ comes equipped with two \textit{boundary monomorphisms} $\alpha_e: G_e \to G_{o(e)}$ and $\omega_e: G_e \to G_{t(e)}$. We assume that for each $e\in ET$ we have $G_{e^{-1}} = G_e$, $\alpha_{e^{-1}} = \omega_e$ and $\omega_{e^{-1}} = \alpha_e$. We let $G_T := \lim \mathbb{G}$ be the direct limit of the inductive system formed by the vertex groups, edge groups and boundary monomorphisms and call $G_T$ a \textit{tree product}. A \textit{sequence of groups} is a tree of groups where the underlying graph is a sequence. If the tree $T$ is a \emph{segment}, i.e.\ $VT = \{v, w\}$ and $ET = \{ e, e^{-1} \}$, then the tree product $G_T$ is an amalgamated product. We will use the notation from amalgamated products and we will write $G_T = G_v \star_{G_e} G_w$. We extend this notation to arbitrary \emph{sequences} $T$: if $VT = \{ v_0, \ldots, v_n \}$, $ET = \{ e_i, e_i^{-1} \mid 1 \leq i \leq n \}$ and $o(e_i) = v_{i-1}, t(e_i) = v_i$, then we will write $G_T = G_{v_0} \star_{G_{e_1}} G_{v_1} \star_{G_{e_2}} \cdots \star_{G_{e_n}} G_{v_n}$.

\begin{proposition}[{\cite[Theorem $1$]{KS70}}]\label{treeproducts}
	Let $\mathbb{G} = (T, (G_v)_{v\in VT}, (G_e)_{e\in ET})$ be a tree of groups. If $T$ is partitioned into subtrees whose tree products are $G_1, \ldots, G_n$ and the subtrees are contracted to vertices, then $G_T$ is isomorphic to the tree product of the tree of groups whose vertex groups are the $G_i$ and the edge groups are the $G_e$, where $e$ is the unique edge which joins two subtrees. Moreover, $G_i \to G_T$ is injective.
\end{proposition}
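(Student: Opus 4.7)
The plan is to prove this purely by the universal property of the tree product as a colimit, together with the standard Bass--Serre fact that vertex groups embed into the tree product of a tree of groups. Throughout, write $T_1,\dots,T_n$ for the subtrees of the partition, so that $G_i = G_{T_i}$ by hypothesis. Let $T'$ denote the \emph{contracted} graph: its vertices are the $T_i$, and its edges are exactly those $e\in ET$ whose endpoints $o(e), t(e)$ lie in different subtrees. For such an edge $e$, if $o(e)\in VT_i$ and $t(e)\in VT_j$, the boundary maps of $T'$ are the compositions $G_e \xrightarrow{\alpha_e} G_{o(e)} \to G_i$ and $G_e \xrightarrow{\omega_e} G_{t(e)} \to G_j$, where the second arrows are the canonical maps into the corresponding subtree product.

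The first step is to verify that $T'$ is a genuine tree of groups, i.e.\ that the above boundary maps are monomorphisms. For this I would invoke (or recall a proof of) the Bass--Serre normal form / tree-product embedding theorem: for any tree of groups $\mathbb{G}$, each vertex group $G_v$ embeds into the tree product $G_{\mathbb{G}}$. Applied to each subtree $T_i$, this gives injectivity of $G_v \hookrightarrow G_i$ for every $v\in VT_i$, so the composition with the original monomorphism $\alpha_e$ (respectively $\omega_e$) is again a monomorphism, and $G_{T'}$ is well-defined.

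The second step is to construct mutually inverse homomorphisms $\Phi: G_T \to G_{T'}$ and $\Psi: G_{T'} \to G_T$ from the universal property. For $\Phi$: for each $v\in VT$ take the composition $G_v \to G_i \to G_{T'}$ where $v\in VT_i$. Compatibility along edges $e\in ET$ internal to some $T_i$ holds because the family $\{G_v \to G_i\}_{v\in VT_i}$ is already compatible by definition of $G_i$; compatibility along edges between different subtrees is exactly the defining compatibility of $G_{T'}$. For $\Psi$: for each $i$, the canonical maps $G_v \to G_T$ ($v\in VT_i$) form a compatible family for the subtree $T_i$ (the internal edge relations hold in $G_T$), so by the universal property of $G_i$ they induce a homomorphism $\psi_i: G_i \to G_T$; compatibility of the family $\{\psi_i\}$ along the edges of $T'$ holds because those are edges of $T$ and compatibility holds in $G_T$. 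Checking $\Phi\circ\Psi = \id$ and $\Psi\circ\Phi = \id$ is routine because both compositions act as the identity on each generator $G_v$ (respectively $G_i$), so by the uniqueness clauses of the universal properties they are the identity on the whole colimit.

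Finally, the last claim that $G_i \to G_T$ is injective follows at once: under the isomorphism $\Phi$ the map $G_i \to G_T$ corresponds to the canonical map $G_i \to G_{T'}$ of a vertex group into the tree product of $T'$, which is injective by the Bass--Serre embedding theorem applied to $T'$ (precisely the result used in Step~1, now applied one level up). The main obstacle is therefore not the colimit bookkeeping, which is formal, but the underlying Bass--Serre embedding theorem; if not cited from \cite{Se79}, it would have to be established via normal forms for elements of a tree product, which is the technically substantive ingredient.
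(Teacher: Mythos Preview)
Your argument is correct. The colimit bookkeeping in Step~2 is entirely formal and goes through as you say; the only nontrivial input is the Bass--Serre embedding theorem (vertex groups inject into the fundamental group of a tree of groups), which you invoke twice---once for each $T_i$ to make the boundary maps of $T'$ monomorphisms, and once for $T'$ itself to get $G_i \hookrightarrow G_{T'} \cong G_T$. This is exactly the right structure, and you correctly identify that result as the substantive ingredient.

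By way of comparison: the paper does not give a proof at all, but simply cites \cite[Theorem~1]{KS70} (Karrass--Solitar). So there is nothing to compare at the level of strategy; you have supplied a self-contained argument where the paper defers to the literature. One small remark: you might make explicit (it is implicit in your setup) that the contracted graph $T'$ is again a tree, which is an elementary consequence of $T$ being a tree and the $T_i$ being connected subtrees partitioning $VT$; this is needed for the Bass--Serre embedding to apply to $T'$.
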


\begin{proposition}\label{treeofgroupsinjective}
	Let $T$ be a tree and let $T'$ be a subtree of $T$. Moreover, we let $\mathbb{G} = (T, (G_v)_{v\in VT}, (G_e)_{e\in ET})$ and $\mathbb{H} = (T', (H_v)_{v\in VT'}, (H_e)_{e\in ET'})$ be two trees of groups and suppose the following:
	\begin{enumerate}[label=(\roman*)]
		\item For each $v\in VT'$ we have $H_v \leq G_v$.
		
		\item\label{Case: ii} For each $e\in ET'$ we have $\alpha_e^{-1}(H_{o(e)}) = \omega_e^{-1}(H_{t(e)})$.
		
		\item For each $e\in ET'$ the group $H_e$ coincides with the group in \ref{Case: ii}.
	\end{enumerate}
	Then the canonical homomorphism $\nu: H_{T'} \to G_T$ between the tree product $H_{T'}$ and the tree product $G_T$ is injective. In particular, we have $\nu(H_{T'}) \cap G_v = H_v$ for each $v\in VT'$.
\end{proposition}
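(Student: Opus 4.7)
The plan is to induct on $|VT'|$, after first reducing to the case $T=T'$. For the reduction, apply Proposition~\ref{treeproducts} to the partition of $VT$ consisting of $VT'$ together with the singletons $\{v\}$ for $v \in VT \setminus VT'$: the contracted graph is again a tree, $G_{T'}$ is one of its vertex groups, and so the canonical map $G_{T'} \to G_T$ is injective. Hence it suffices to prove the proposition under the assumption $T=T'$; the general case then follows by composition, and the ``moreover'' clause transfers because $G_v \subseteq G_{T'}$ forces any intersection inside $G_T$ to be computed already inside $G_{T'}$.

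For the induction, I would strengthen the statement by folding \emph{$\nu(H_{T'}) \cap G_v = H_v$ for every $v \in VT'$} into the inductive hypothesis. The base case $|VT'|=1$ is immediate. For the step, pick a leaf $v_0$ of $T'$, let $e$ be the unique edge of $T'$ at $v_0$, and let $T''$ be the subtree $T' \setminus \{v_0\}$. Proposition~\ref{treeproducts} gives
\begin{align*}
G_{T'} = G_{T''} \star_{G_e} G_{v_0} \quad \text{and} \quad H_{T'} = H_{T''} \star_{H_e} H_{v_0},
\end{align*}
so $\nu$ is a morphism of amalgamated free products. By the standard normal-form theorem for such amalgams, $\nu$ is injective once one verifies that $H_{T''} \cap \alpha_e(G_e) = \alpha_e(H_e)$ inside $G_{T''}$ and $H_{v_0} \cap \omega_e(G_e) = \omega_e(H_e)$ inside $G_{v_0}$. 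The second equality follows from injectivity of $\omega_e$ and the definition $H_e = \omega_e^{-1}(H_{t(e)})$. For the first, I note that $\alpha_e(G_e) \subseteq G_{o(e)}$ and apply the inductive intersection statement at the vertex $o(e) \in VT''$ to obtain $H_{T''} \cap \alpha_e(G_e) = H_{o(e)} \cap \alpha_e(G_e) = \alpha_e(H_e)$.

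It remains to re-establish the strengthened ``moreover'' clause for $T'$ itself. For $v=v_0$, uniqueness of normal forms in $G_{T''} \star_{G_e} G_{v_0}$ forces any element of $\nu(H_{T'})$ that happens to lie in $G_{v_0}$ to lie already in $H_{v_0}$ (its $G_{T''}$-syllables must collapse into $\alpha_e(H_e) = H_{T''} \cap \alpha_e(G_e)$). For $v \in VT''$, the same normal-form argument first yields $\nu(H_{T'}) \cap G_{T''} = H_{T''}$; intersecting with $G_v$ and applying the inductive intersection statement then gives $H_v$.

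The main obstacle is conceptual rather than computational: the ``moreover'' clause cannot be peeled off and proved at the end, because it is precisely what powers the inductive verification of the amalgam-intersection hypothesis $H_{T''} \cap \alpha_e(G_e) = \alpha_e(H_e)$ at each step. Once this is built into the inductive hypothesis, the remainder is routine bookkeeping with normal forms in amalgamated products.
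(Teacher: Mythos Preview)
Your argument is correct. The paper itself does not give a proof at all: it simply writes ``This follows from \cite[Proposition~4.3]{KWM05} and \cite[Proposition~20]{Se79}.'' So you are supplying a self-contained proof where the paper defers to the literature on sub-graphs of groups (the Kapovich--Weidmann--Miasnikov folding machinery together with Serre's structure theory). Your route---reduce to $T=T'$ via Proposition~\ref{treeproducts}, then induct on $|VT'|$ by peeling off a leaf and using the normal-form criterion for injectivity of a morphism between amalgamated products---is the standard elementary approach and is exactly what those references encapsulate in greater generality. The only point worth tightening is the ``moreover'' paragraph: rather than speaking of syllables collapsing, state explicitly that the two intersection conditions you verified guarantee that a \emph{reduced} sequence for $H_{T''}\star_{H_e}H_{v_0}$ maps to a \emph{reduced} sequence for $G_{T''}\star_{G_e}G_{v_0}$, so length is preserved under $\nu$; then an element of $\nu(H_{T'})$ lying in a single vertex group of $G_{T'}$ must have length $\leq 1$ already in $H_{T'}$, and the case split (which factor it sits in) finishes via Corollary~\ref{AcapBisC} and the inductive hypothesis. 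With that one sentence made precise, your proof is complete and more informative than the paper's citation.
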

\begin{proof}
	This follows from \cite[Proposition~$4.3$]{KWM05} and \cite[Proposition~$20$]{Se79}.
\end{proof}

\begin{corollary}\label{intersectionwithasubtree}
	Let $\mathbb{G} = (T, (G_v)_{v\in VT}, (G_e)_{e\in ET})$ be a tree of groups and let $H_v \leq G_v$ for each $v\in VT$. Assume that $H_e := \alpha_e^{-1}(H_{o(e)}) = \omega_e^{-1}(H_{t(e)})$ for all $e \in ET$ and let $\mathbb{H} = (T, (H_v)_{v\in VT}, (H_e)_{e\in ET})$ be the associated tree of groups. Let $T'$ be a subtree of $T$ and let $\mathbb{L} = (T', (G_v)_{v\in VT'}$, $(G_e)_{e\in ET'})$, $\mathbb{K} = (T', (H_v)_{v\in VT'}, (H_e)_{e\in ET'})$. Then $H_T \cap L_{T'} = K_{T'}$ in $G_T$.
\end{corollary}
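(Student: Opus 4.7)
The plan is to work entirely inside $G_T$, identifying $K_{T'}$, $L_{T'}$, and $H_T$ with their images under the canonical maps into $G_T$, all of which are injective by Proposition \ref{treeofgroupsinjective}. The inclusion $K_{T'} \subseteq H_T \cap L_{T'}$ will be immediate: Proposition \ref{treeofgroupsinjective} (with $\mathbb{K}$ viewed both as a sub-tree-of-groups of $\mathbb{H}$ and as one of $\mathbb{L}$) embeds $K_{T'}$ into $H_T$ and into $L_{T'}$, and both composites into $G_T$ coincide with the canonical map $K_{T'} \to G_T$.

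For the reverse inclusion I would induct on $|VT \setminus VT'|$. The base case $T' = T$ is trivial, as $L_{T'} = G_T$ and $K_{T'} = H_T$. In the inductive step, since a subtree of a finite tree that contains every leaf of $T$ must equal $T$, we may choose a leaf $v_0$ of $T$ with $v_0 \notin VT'$. Let $e$ be the unique edge at $v_0$, and set $T_0 := T \setminus \{v_0\}$, so that $T' \subseteq T_0$. Proposition \ref{treeproducts} then produces the amalgamated-product decompositions $G_T = G_{T_0} \star_{G_e} G_{v_0}$ and $H_T = H_{T_0} \star_{H_e} H_{v_0}$ inside $G_T$. Since $T' \subseteq T_0$ we also have $L_{T'} \subseteq G_{T_0}$ inside $G_T$.

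The key technical step is the identity $H_T \cap G_{T_0} = H_{T_0}$ in $G_T$. Once this is established, $H_T \cap L_{T'} = (H_T \cap G_{T_0}) \cap L_{T'} = H_{T_0} \cap L_{T'}$, and the induction hypothesis applied to the pair $T' \subseteq T_0$ yields $H_{T_0} \cap L_{T'} = K_{T'}$, closing the induction.

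The inclusion $H_{T_0} \subseteq H_T \cap G_{T_0}$ is clear, so take $g \in H_T \cap G_{T_0}$ and write it in reduced form $g = h_0 k_1 h_1 \cdots k_n h_n$ with respect to $H_T = H_{T_0} \star_{H_e} H_{v_0}$, meaning $k_i \in H_{v_0} \setminus H_e$ and every internal $h_i \in H_{T_0} \setminus H_e$. The hypothesis $H_e = \alpha_e^{-1}(H_{o(e)}) = \omega_e^{-1}(H_{t(e)})$, together with Proposition \ref{treeofgroupsinjective} applied to $(H_v)_{v \in VT_0} \leq (G_v)_{v \in VT_0}$ on $T_0$, yields $H_{v_0} \cap G_e = H_e$ and $H_{T_0} \cap G_e = H_e$ (intersections taken in $G_T$ after identifying $G_e$ with its image under the appropriate boundary monomorphism). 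Hence the given expression remains reduced when read inside $G_T = G_{T_0} \star_{G_e} G_{v_0}$, so the normal form theorem for amalgamated products forces $n = 0$, i.e.\ $g = h_0 \in H_{T_0}$. The main obstacle is precisely this transfer of reducedness from $H_T$ to $G_T$; it is the point at which the compatibility hypothesis $H_e = \alpha_e^{-1}(H_{o(e)}) = \omega_e^{-1}(H_{t(e)})$ is indispensable.
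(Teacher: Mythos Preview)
Your argument is correct. The approach, however, differs from the paper's. The paper contracts the subtree $T'$ to a single vertex in one stroke (via Proposition~\ref{treeproducts}), verifies that for every edge $e$ leaving $T'$ one has $\alpha_e^{-1}(K_{T'}) = H_e$ (using the ``in particular'' clause of Proposition~\ref{treeofgroupsinjective} to get $K_{T'} \cap G_{o(e)} = H_{o(e)}$), and then applies Proposition~\ref{treeofgroupsinjective} once to the contracted trees of groups to read off $H_T \cap L_{T'} = K_{T'}$ directly. You instead induct on $|VT \setminus VT'|$, peeling off one leaf $v_0 \notin VT'$ at a time and reducing to the key identity $H_T \cap G_{T_0} = H_{T_0}$ for $T_0 = T \setminus \{v_0\}$.

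Two remarks on your version. First, your normal-form argument for $H_T \cap G_{T_0} = H_{T_0}$ is valid, but it is doing by hand exactly what Proposition~\ref{treeofgroupsinjective} already packages: once you have checked $\alpha_e^{-1}(H_{T_0}) = H_e = \omega_e^{-1}(H_{v_0})$ (which you do, via $H_{T_0} \cap G_{o(e)} = H_{o(e)}$), Proposition~\ref{treeofgroupsinjective} applied to the two-vertex tree with vertex groups $G_{T_0}, G_{v_0}$ and subgroups $H_{T_0}, H_{v_0}$ immediately yields $H_T \cap G_{T_0} = H_{T_0}$. Second, the paper's one-shot contraction is shorter, but your inductive peeling has the minor advantage of only ever working with a single amalgamated product $A \star_C B$ at each step, which some readers may find more transparent.
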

\begin{proof}
	Using Proposition \ref{treeproducts} we deduce that $L_{T'} \leq G_T$ and $K_{T'} \leq H_T$. Using Proposition \ref{treeofgroupsinjective} we deduce $H_T \leq G_T$ and $K_{T'} \leq L_{T'}$. Using Proposition \ref{treeproducts} again, we can contract the tree $T'$ to a vertex. Then $L_{T'}$ is a vertex group containing $K_{T'}$. Let $e \in ET$ be an edge joining $T'$ with a vertex of $VT \backslash VT'$ and suppose $o(e) \in T'$. As $\alpha_e(G_e) \leq G_{o(e)}$, the previous proposition yields $\alpha_e(G_e) \cap K_{T'} \leq G_{o(e)} \cap K_{T'} = H_{o(e)}$. This implies $\alpha^{-1}_e(K_{T'}) \leq \alpha^{-1}_e(H_{o(e)})$. As $H_e \leq \alpha^{-1}_e(K_{T'}) \leq \alpha^{-1}_e(H_{o(e)}) = H_e$, we deduce $\alpha^{-1}_e(K_{T'}) = H_e = \alpha^{-1}_e(H_{o(e)})$. We denote the tree products of the trees of groups $\mathbb{G}$ and $\mathbb{H}$, where $T'$ is contracted to a vertex, by $G'$ and $H'$. Using Proposition \ref{treeofgroupsinjective} the canonical homomorphism $\nu': H' \to G'$ is injective and we have $\nu'(H') \cap L_{T'} = K_{T'}$ (note that $L_{T'}$ is a vertex group of $G'$). This finishes the claim.
\end{proof}

\begin{corollary}\label{AcapBisC}
	Let $A, B, C$ be groups and let $C \to A$, $C \to B$ be two monomorphisms. Then $A \cap B = C$ in $A\star_C B$.
\end{corollary}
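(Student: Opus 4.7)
The plan is to derive this as a direct instance of Proposition \ref{treeofgroupsinjective}. The inclusion $C \subseteq A \cap B$ is automatic, since in $G_T := A \star_C B$ the two boundary monomorphisms $\alpha_e : C \to A$ and $\omega_e : C \to B$ are identified by the amalgamation. For the reverse inclusion, the idea is to realise $B$ itself as the image of the tree product of a suitably chosen sub-tree-of-groups over the same underlying tree, so that the intersection clause of Proposition \ref{treeofgroupsinjective} directly delivers $B \cap A = C$.

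Concretely, I would take $T$ to be the segment with vertices $v_A, v_B$ and edge $e$, with vertex groups $G_{v_A} = A$, $G_{v_B} = B$ and edge group $G_e = C$, so that the associated tree product is $G_T = A \star_C B$. I then set $T' = T$ and define $\mathbb{H}$ by $H_{v_A} := \alpha_e(C) \leq A$ and $H_{v_B} := B$. The compatibility condition from Proposition \ref{treeofgroupsinjective} reduces to
\[
\alpha_e^{-1}(\alpha_e(C)) = C = \omega_e^{-1}(B),
\]
which is trivially satisfied because $\alpha_e$ is injective and $\omega_e$ has codomain $B$. This forces $H_e = C$, and the resulting tree product is $H_{T'} = C \star_C B$, which is canonically isomorphic to $B$ since amalgamating $B$ with $C$ along $C$ itself collapses to $B$.

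Applying Proposition \ref{treeofgroupsinjective} to this data then gives that the canonical map $\nu : H_{T'} \to G_T$ is injective and that $\nu(H_{T'}) \cap G_{v_A} = H_{v_A}$. Translating, this reads $B \cap A = C$ inside $A \star_C B$, which is exactly the claim. I do not foresee any real obstacle: the statement is essentially the degenerate case of Proposition \ref{treeofgroupsinjective} in which $T'$ equals $T$, one vertex group is kept intact, and the other is replaced by the image of the edge group.
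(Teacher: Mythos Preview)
Your proof is correct and takes essentially the same approach as the paper: both apply Proposition \ref{treeofgroupsinjective} to the segment $T' = T$ with one vertex group kept intact and the other replaced by the image of the edge group. The only difference is that the paper keeps $A$ and shrinks $B$ to $C$ (writing $A \cong A \star_C C \to A \star_C B$), whereas you keep $B$ and shrink $A$ to $C$; this is a symmetric, cosmetic difference.
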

\begin{proof}
	Using Proposition \ref{treeofgroupsinjective} we have a monomorphism $A \cong A \star_C C \to A \star_C B$ and $A \cap B = C$.
\end{proof}

\begin{remark}\label{Remark: isomorphism preserves amalgamated product}
	Let $A', A, B, C$ be groups, let $\alpha: C \to A$, $\beta:C \to B$ and $\alpha': C \to A'$ be monomorphisms and let $\phi: A \to A'$ be an isomorphism. If $\alpha' = \phi \circ \alpha$, then the amalgamated products $A \star_C B$ and $A' \star_C B$ are isomorphic. One can prove this by constructing two unique homomorphisms $A \star_C B \to A' \star_C B$ and $A' \star_C B \to A \star_C B$ such that the concatenation is the identity on $A$ (resp.\ $A'$) and on $B$.
\end{remark}

\begin{lemma}\label{folding}
	Let $\mathbb{G} = (T, (G_v)_{v\in VT}, (G_e)_{e\in ET})$ be a tree of groups. Let $e \in ET$ and $G_e \leq H_{o(e)} \leq G_{o(e)}$. Let $VT' = VT \cup \{x\}$, $ET' = \left( ET \backslash \{ e, e^{-1} \} \right) \cup \{ f, f^{-1}, h, h^{-1} \}$ with $o(f) = o(e)$, $t(f) = x = o(h)$, $t(h) = t(e)$, $G_x := H_{o(e)} =: G_f$, $G_h := G_e$. Then the two tree products of the trees of groups are isomorphic.
\end{lemma}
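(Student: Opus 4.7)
The plan is to apply Proposition~\ref{treeproducts} by contracting the new edge $f$. The operation described in the lemma is the Bass--Serre move of \emph{subdividing} the edge $e$: one inserts a new vertex $x$ carrying the intermediate group $H_{o(e)}$, so that the first half of the subdivided edge is glued by the identity on $H_{o(e)}$ and the second half carries the data $\alpha_h, \omega_h = \omega_e$ obtained by factoring $\alpha_e$ through $H_{o(e)}$. Such subdivisions must leave the tree product unchanged, and Proposition~\ref{treeproducts} is essentially enough to see this.

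More precisely, let $T'' \subseteq T'$ be the sub-segment with $VT'' = \{o(e), x\}$ and edges $\{f, f^{-1}\}$. Its tree product is $G_{o(e)} \star_{G_f} G_x = G_{o(e)} \star_{H_{o(e)}} H_{o(e)}$, where $\alpha_f \colon H_{o(e)} \hookrightarrow G_{o(e)}$ is the given inclusion and $\omega_f = \mathrm{id}_{H_{o(e)}}$. Since $\omega_f$ is an isomorphism, the universal property of the amalgamated product yields a canonical isomorphism $\Phi \colon G_{o(e)} \star_{H_{o(e)}} H_{o(e)} \xrightarrow{\sim} G_{o(e)}$ that restricts to the identity on $G_{o(e)}$ and to the inclusion on $G_x = H_{o(e)}$. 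Partitioning $T'$ into $T''$ together with the singletons $\{v\}$ for $v \in VT \setminus \{o(e)\}$ and applying Proposition~\ref{treeproducts}, I obtain $G_{T'} \cong G_{\widetilde T}$, where $\widetilde{\mathbb{G}}$ is the contracted tree of groups: its underlying tree is canonically $T$; its vertex group at the contracted vertex is $G_{o(e)} \star_{H_{o(e)}} H_{o(e)}$; all other vertex groups and all edge groups $G_{e'}$ with $e' \neq e$ are unchanged; and the surviving copy of $e$ is the edge $h$ with $\omega_h = \omega_e$ and first boundary map $G_e \xrightarrow{\alpha_h} H_{o(e)} \hookrightarrow G_{o(e)} \star_{H_{o(e)}} H_{o(e)}$.

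Transporting the vertex group at the contracted vertex along $\Phi$, this boundary map of $h$ becomes $G_e \xrightarrow{\alpha_h} H_{o(e)} \hookrightarrow G_{o(e)}$, which equals $\alpha_e$ by the defining factorization of $\alpha_h$ (recall $\alpha_e(G_e) \leq H_{o(e)}$), while each other boundary map incident to $o(e)$ is preserved. Hence $\widetilde{\mathbb{G}}$ coincides with $\mathbb{G}$ after applying $\Phi$, and a routine application of the universal property of the direct limit (the tree-of-groups analogue of Remark~\ref{Remark: isomorphism preserves amalgamated product}, proved by constructing mutually inverse maps vertex-by-vertex) shows that replacing a vertex group by an isomorphic group with transported boundary maps does not alter the tree product; combined with the previous step this gives $G_{T'} \cong G_T$. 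The only delicate point, and it is really just bookkeeping, is the collapse $G_{o(e)} \star_{H_{o(e)}} H_{o(e)} \cong G_{o(e)}$, which is immediate from the universal property since $\omega_f$ is the identity.
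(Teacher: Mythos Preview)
Your proof is correct and follows essentially the same approach as the paper: contract the edge $f$ via Proposition~\ref{treeproducts}, use the collapse $G_{o(e)} \star_{H_{o(e)}} H_{o(e)} \cong G_{o(e)}$, and invoke Remark~\ref{Remark: isomorphism preserves amalgamated product} to transport the vertex group along this isomorphism. You have simply spelled out the bookkeeping (matching boundary maps after the collapse) in more detail than the paper's two-line proof.
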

\begin{proof}
	Using Proposition \ref{treeproducts}, we contract the edge $f$ to a vertex. The claim follows now from Remark \ref{Remark: isomorphism preserves amalgamated product} and the fact that $G_{o(e)} \star_{H_{o(e)}} H_{o(e)} \cong G_{o(e)}$.
\end{proof}

\subsection*{Commutator blueprints of type $\mathbf{(4, 4, 4)}$}\label{Chapter: commutator blueprint of type ((W, S), D)}

In \cite{BiRGD} we have introduced \emph{commutator blueprints} of type $(W, S)$. In this paper we are only interested in the case where $(W, S)$ is of type $(4, 4, 4)$. For more information about general commutator blueprints we refer to \cite[Section $3$]{BiRGD}.

\begin{convention}
	In this subsection we let $(W, S)$ be of type $(4, 4, 4)$.
\end{convention}

We let $\mathcal{P}$ be the set of prenilpotent pairs of positive roots. For $w\in W$ we define $\Phi(w) := \{ \alpha \in \Phi_+ \mid w \notin \alpha \}$. Let $G = (c_0, \ldots, c_k) \in \mathrm{Min}$ and let $(\alpha_1, \ldots, \alpha_k)$ be the sequence of roots crossed by $G$. We define $\Phi(G) := \{ \alpha_i \mid 1 \leq i \leq k \}$. Using the indices we obtain an ordering $\leq_G$ on $\Phi(G)$ and, in particular, on $[\alpha, \beta] = [\beta, \alpha] \subseteq \Phi(G)$ for all $\alpha, \beta \in \Phi(G)$. Note that $\Phi(G) = \Phi(w)$ holds for every $G \in \mathrm{Min}(w)$. We abbreviate $\mathcal{I} := \{ (G, \alpha, \beta) \in \mathrm{Min} \times \Phi_+ \times \Phi_+ \mid \alpha, \beta \in \Phi(G), \alpha \leq_G \beta \}$.

Given a family $\left(M_{\alpha, \beta}^G \right)_{(G, \alpha, \beta) \in \mathcal{I}}$, where $M_{\alpha, \beta}^G \subseteq (\alpha, \beta)$ is ordered via $\leq_G$. For $w\in W$ we define the group $U_w$ via the following presentation:
\[ U_w := \left\langle \{ u_{\alpha} \mid \alpha \in \Phi(w) \} \;\middle|\; \begin{cases*}
	\forall \alpha \in \Phi(w): u_{\alpha}^2 = 1, \\
	\forall (G, \alpha, \beta) \in \mathcal{I}, G \in \mathrm{Min}(w): [u_{\alpha}, u_{\beta}] = \prod\nolimits_{\gamma \in M_{\alpha, \beta}^G} u_{\gamma}
\end{cases*} \right\rangle
\]
Here the product is understood to be ordered via the ordering $\leq_G$, i.e.\ if $(G, \alpha, \beta) \in \mathcal{I}$ with $G \in \mathrm{Min}(w)$ and $M_{\alpha, \beta}^G = \{ \gamma_1 \leq_G \ldots \leq_G \gamma_k \} \subseteq (\alpha, \beta) \subseteq \Phi(G)$, then $\prod\nolimits_{\gamma \in M_{\alpha, \beta}^G} u_{\gamma} = u_{\gamma_1} \cdots u_{\gamma_k}$. Note that there could be $G, H \in \mathrm{Min}(w)$, $\alpha, \beta \in \Phi(w)$ with $\alpha \leq_G \beta$ and $\beta \leq_H \alpha$. In this case we have two commutation relations, namely
\begin{align*}
	&[u_{\alpha}, u_{\beta}] = \prod_{\gamma \in M_{\alpha, \beta}^G} u_{\gamma} &&\text{and} &&[u_{\beta}, u_{\alpha}] = \prod_{\gamma \in M_{\beta, \alpha}^H} u_{\gamma}.
\end{align*}
From now on we will implicitly assume that each product $\prod_{\gamma \in M_{\alpha, \beta}^G} u_{\gamma}$ is ordered via the ordering $\leq_G$.

\begin{definition}\label{Definition: commutator blueprint}
	A \emph{commutator blueprint of type $(4, 4, 4)$} is a family $\mathcal{M} = \left(M_{\alpha, \beta}^G \right)_{(G, \alpha, \beta) \in \mathcal{I}}$ of subsets $M_{\alpha, \beta}^G \subseteq (\alpha, \beta)$ ordered via $\leq_G$ satisfying the following axioms:
	\begin{enumerate}[label=(CB\arabic*)]
		\item Let $G = (c_0, \ldots, c_k) \in \mathrm{Min}$ and let $H = (c_0, \ldots, c_m)$ for some $1 \leq m \leq k$. Then $M_{\alpha, \beta}^H = M_{\alpha, \beta}^G$ holds for all $\alpha, \beta \in \Phi(H)$ with $\alpha \leq_H \beta$.
		
		\item Let $s\neq t \in S$, let $G \in \mathrm{Min}(r_{\{s, t\}})$, let $(\alpha_1, \ldots, \alpha_4)$ be the sequence of roots crossed by $G$ and let $1 \leq i < j \leq 4$. Then we have
		\[ M_{\alpha_i, \alpha_j}^G = \begin{cases}
			(\alpha_i, \alpha_j) & \{ \alpha_i, \alpha_j \} = \{ \alpha_s, \alpha_t \} \\
			\emptyset & \{ \alpha_i, \alpha_j \} \neq \{ \alpha_s, \alpha_t \}
		\end{cases} = \begin{cases}
		\{ \alpha_2, \alpha_3 \} & (i, j) = (1, 4), \\
		\emptyset & \text{else}.
	\end{cases} 
		 \]
		
		\item For each $w\in W$ we have $\vert U_w \vert = 2^{\ell(w)}$, where $U_w$ is defined as above.
	\end{enumerate}
	
	A commutator blueprint $\mathcal{M} = \left(M_{\alpha, \beta}^G \right)_{(G, \alpha, \beta) \in \mathcal{I}}$ of type $(4, 4, 4)$ is called \textit{locally Weyl-invariant} if for all $w\in W$, $s\in S$, $G \in \mathrm{Min}_s(w)$ and $\alpha, \beta \in \Phi(G) \backslash \{ \alpha_s \}$ with $\alpha \leq_G \beta$ and $o(r_{\alpha} r_{\beta}) < \infty$ we have $M_{s\alpha, s\beta}^{sG} = sM_{\alpha, \beta}^G := \{ s\gamma \mid \gamma \in M_{\alpha, \beta}^G \}$.
\end{definition}

\begin{remark}
	Let $\mathcal{M} = \left(M_{\alpha, \beta}^G \right)_{(G, \alpha, \beta) \in \mathcal{I}}$ be a commutator blueprint of type $(4, 4, 4)$. Then $\mathcal{M}$ is locally Weyl-invariant if and only if for all $(G, \alpha, \beta) \in \mathcal{I}$ with $o(r_{\alpha} r_{\beta}) < \infty$ the following holds:
	\[ M_{\alpha, \beta}^G = \begin{cases}
		(\alpha, \beta) & \text{ if }\vert (\alpha, \beta) \vert = 2, \\
		\emptyset & \text{ else}.
	\end{cases} \]
\end{remark}

\begin{remark}\label{Remark: Product mapping is a bijection}
	Let $G = (c_0, \ldots, c_k) \in \mathrm{Min}(w)$ and let $(\alpha_1, \ldots, \alpha_k)$ be the sequence of roots crossed by $G$. Note that it is a direct consequence of (CB$3$) that the product map $U_{\alpha_1} \times \cdots \times U_{\alpha_k} \to U_w, (u_1, \ldots, u_k) \mapsto u_1 \cdots u_k$ is a bijection, where $\ZZ_2 \cong U_{\alpha_i} = \langle u_{\alpha_i} \rangle \leq U_w$.
\end{remark}

\subsection*{Tree products}

We have mentioned in the introduction that this paper can be seen as the first in a series of two. To ease citation, we use from now on the same notation for both.

\begin{convention}
	For the rest of this paper we $(W, S)$ be of type $(4, 4, 4)$ and $S = \{ r, s, t \}$. Moreover, we let $\mathcal{M} = \left( M_{\alpha, \beta}^G \right)_{(G, \alpha, \beta) \in \mathcal{I}}$ be a locally Weyl-invariant commutator blueprint of type $(4, 4, 4)$.
\end{convention}

For a residue $R$ of $\Sigma(W, S)$ we put $w_R := \proj_R 1_W$. Let $R$ be a residue of type $\{ s, t \}$. Then we have $\ell(w_R s) = \ell(w_R) +1 = \ell(w_R t)$. We define the group $V_{w_R r_{\{ s, t \}}} := \langle U_{w_R s}, U_{w_R t} \rangle \leq U_{w_R r_{\{s, t\}}}$. Using (CB$3$) and fact that $\mathcal{M}$ is locally Weyl-invariant, the group $V_{w_R r_{\{s, t\}}}$ is an index $2$ subgroup of $U_{w_R r_{\{ s, t \}}}$ (cf.\ Remark \ref{Remark: Product mapping is a bijection}). For each $i \in \NN$ we let $\mathcal{R}_i$ be the set of all rank $2$ residues $R$ with $\ell(w_R) = i$ (e.g.\ $\mathcal{R}_0 = \{ R_{\{s, t\}}(1_W) \mid s\neq t \in S \}$). We let $\mathcal{T}_{i, 1}$ be the set of all residues $R \in \mathcal{R}_i$ with $\ell(w_R sr) = \ell(w_R) +2 = \ell(w_R tr)$, where $\{ s, t \}$ is the type of $R$.

The main goal of the rest of this paper is to prove Theorem \ref{Theorem: Main application}. Therefore, we need to introduce several sequences of groups. The groups in the sequences of groups will always be generated by elements $u_{\alpha}$ for suitable $\alpha \in \Phi_+$. Let $A$ and $B$ vertex groups such that the corresponding vertices are joint by an edge, and let $C$ be the edge group. Let $\Phi_A, \Phi_B \subseteq \Phi_+$ be such that $A = \langle u_{\alpha} \mid \alpha \in \Phi_A \rangle$ and $B = \langle u_{\alpha} \mid \alpha \in \Phi_B \rangle$. If we do not specify $C$, then we will implicitly assume that $C = \langle u_{\alpha} \mid \alpha \in \Phi_A \cap \Phi_B \rangle$. If $C$ is as in this case, then it will always be clear that we have canonical homomorphisms $C \to A$ and $C \to B$ which are injective, and we define $A \hat{\star} B := A \star_C B$.

\subsection*{The groups $\mathbf{V_R}$ and $\mathbf{O_R}$}

For a residue $R \in \mathcal{T}_{i, 1}$ of type $\{s, t\}$ we define the group $V_R$ to be the tree product of the sequence of groups with vertex groups
\allowdisplaybreaks
\begin{align*}
	U_{w_R sr}, V_{w_R r_{\{s, t\}}}, U_{w_R tr}
\end{align*}
Moreover, we define the group $O_R$ to be the tree product of the sequence of groups with vertex groups
\allowdisplaybreaks
\begin{align*}
	V_{w_R sr_{\{r, t\}}}, U_{w_R r_{\{s, t\}}}, V_{w_R tr_{\{r, s\}}}
\end{align*}

\begin{remark}\label{Remark: tree product generated by root group elements}
	For $V_R$ we consider $\alpha := w_R s\alpha_r$. Using Lemma \ref{mingallinrep} we see that $-w_R \alpha_t \subseteq \alpha$. As $w_R t \in (-w_R \alpha_t)$, we deduce $w_R tr, w_R r_{\{s, t\}} \in \alpha$ and hence $u_{\alpha}$ is neither a generator of $V_{w_R r_{\{s, t\}}}$ nor of $U_{w_R tr}$. Now we consider $w_R \alpha_s$. As $-w_R t\alpha_r \subseteq w_R \alpha_s$ by Lemma \ref{mingallinrep} we deduce that $u_{w_R \alpha_s}$ is not a generator of $U_{w_R tr}$. Using similar methods we infer that $V_R$ is generated by $\{ u_{\alpha} \mid \exists v\in \{ w_R sr, w_R tr  \}: v\notin \alpha \}$. A similar result holds for $O_R$.
\end{remark}

\begin{figure}[h]
	\begin{minipage}{0.4\linewidth}
			\includegraphics[scale=0.9]{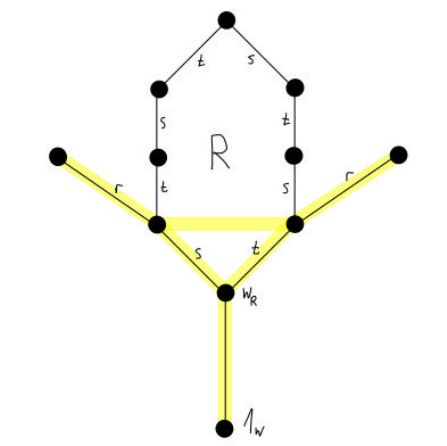}
			\caption{Illustration of the group $V_R$}
	\end{minipage}
	\begin{minipage}{0.4\linewidth}
		\includegraphics[scale=0.9]{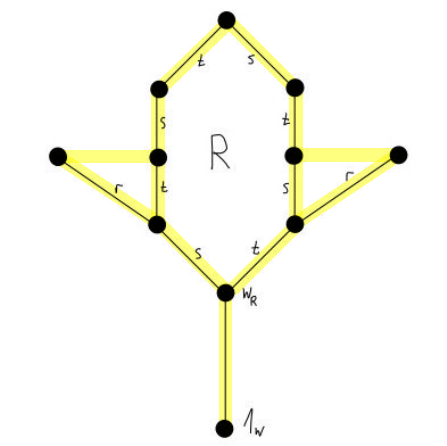}
		\caption{Illustration of the group $O_R$}
	\end{minipage}
\end{figure}

\begin{lemma}\label{VRtoORinjective}
	Let $R \in \mathcal{T}_{i, 1}$. Then the canonical homomorphism $V_R \to O_R$ is injective.
\end{lemma}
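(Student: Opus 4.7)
My plan is to invoke Proposition~\ref{treeofgroupsinjective} with $T = T'$ the sequence of length~$2$, taking $\mathbb{G}$ to be the tree of groups defining $O_R$ and $\mathbb{H}$ the tree of groups defining $V_R$. Condition~(i) is immediate: $U_{w_R sr} \leq V_{w_R s r_{\{r, t\}}}$ and $U_{w_R tr} \leq V_{w_R t r_{\{r, s\}}}$ since these $V$-groups are defined as the join of the two neighbouring $U$-groups, and $V_{w_R r_{\{s, t\}}} \leq U_{w_R r_{\{s, t\}}}$ by definition. Next I would unwind the convention $A \hat{\star} B = A \star_{\langle u_\alpha \mid \alpha \in \Phi_A \cap \Phi_B \rangle} B$ with the natural choices $\Phi_{U_w} = \Phi(w)$ and $\Phi_{V_{w r_{\{s, t\}}}} = \Phi(ws) \cup \Phi(wt)$; a short calculation with the definition of $\Phi(\cdot)$ then shows that the two edge groups of $O_R$ are $U_{w_R st}$ and $U_{w_R ts}$, while the two edge groups of $V_R$ are $U_{w_R s}$ and $U_{w_R t}$.

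Conditions~(ii) and~(iii) of Proposition~\ref{treeofgroupsinjective} therefore reduce to the four intersection identities
\[
U_{w_R st} \cap U_{w_R sr} \;=\; U_{w_R s} \;=\; U_{w_R st} \cap V_{w_R r_{\{s, t\}}},
\]
taken inside $V_{w_R s r_{\{r, t\}}}$ and $U_{w_R r_{\{s, t\}}}$ respectively, together with the two $s \leftrightarrow t$ symmetric versions. The first identity I would verify by fixing a minimal gallery from $1_W$ to $w_R s r_{\{r, t\}}$ that first reaches $w_R s$ and then branches through $w_R sr$ and $w_R st$; Remark~\ref{Remark: Product mapping is a bijection} applied in the ambient $U_{w_R s r_{\{r, t\}}}$ then expresses each element of $V_{w_R s r_{\{r, t\}}}$ uniquely as $x \cdot u_{w_R \alpha_s}^a \, u_{w_R s \alpha_r}^b \, u_{w_R s \alpha_t}^c$ with $x \in U_{w_R}$, and membership in $U_{w_R sr}$ forces $c = 0$ while membership in $U_{w_R st}$ forces $b = 0$, so the intersection lies in $U_{w_R s}$. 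For the second identity I would use the analogous product decomposition of $U_{w_R r_{\{s, t\}}}$ along the gallery through $w_R, w_R s, w_R st, w_R sts, w_R r_{\{s, t\}}$ and characterise $V_{w_R r_{\{s, t\}}}$ via the dihedral structure visible in the eight elements of $V_{r_{\{s, t\}}}$ listed in the remark after Section~\ref{Section: Action on building}: the factors at $w_R s\alpha_t$ and $w_R st\alpha_s$ must coincide, and intersecting with $U_{w_R st}$, whose last two gallery factors vanish, forces those two as well as the top factor to vanish, leaving exactly $U_{w_R s}$.

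The main obstacle is precisely this second identity, since it requires making the index-$2$ embedding $V_{w_R r_{\{s, t\}}} \hookrightarrow U_{w_R r_{\{s, t\}}}$ explicit in terms of the chosen gallery normal form; in the rank-$2$ case $w_R = 1_W$ this is transparent from the eight listed elements, and I expect the general case to follow by multiplying through by $U_{w_R}$, which commutes with the rank-$2$ combinatorics in a controlled way via~(CB$2$) and the locally Weyl-invariance of $\mathcal{M}$. Once all four identities are in hand, Proposition~\ref{treeofgroupsinjective} applies directly and yields the injectivity of the canonical homomorphism $V_R \to O_R$.
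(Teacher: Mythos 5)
Your plan coincides with the paper's: invoke Proposition~\ref{treeofgroupsinjective} for the two length-$2$ sequences, observe the vertex-group containments, identify the edge groups of $O_R$ as $U_{w_R st}$ and $U_{w_R ts}$ and those of $V_R$ as $U_{w_R s}$ and $U_{w_R t}$, and reduce the remaining conditions to exactly the four intersection identities $U_{w_R sr}\cap U_{w_R st}=U_{w_R s}=V_{w_R r_{\{s,t\}}}\cap U_{w_R st}$ (and the $s\leftrightarrow t$ versions), which the paper states without further proof. One small inaccuracy in your fill-in: the expression $x\cdot u_{w_R\alpha_s}^a\,u_{w_R s\alpha_r}^b\,u_{w_R s\alpha_t}^c$ with $x\in U_{w_R}$ describes only $2^{\ell(w_R)+3}$ elements, whereas $V_{w_R sr_{\{r,t\}}}$, being index~$2$ in $U_{w_R sr_{\{r,t\}}}$, has $2^{\ell(w_R)+4}$; the correct description, analogous to your $a_2=a_3$ observation for $V_{w_R r_{\{s,t\}}}$, is as the subgroup of $U_{w_R sr_{\{r,t\}}}$ cut out by one linear condition on the five coordinates over $w_R$, not a four-factor product. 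This does not affect the validity of the intersection identity $U_{w_R sr}\cap U_{w_R st}=U_{w_R s}$, but the verification should run through that index-$2$ description rather than the truncated normal form; apart from this the argument is the paper's.
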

\begin{proof}
	Let $R$ be of type $\{s, t\}$. We will apply Proposition \ref{treeofgroupsinjective}. Therefore we first see that each vertex group of $V_R$ is contained in the corresponding vertex group of $O_R$, e.g.\ $U_{w_R sr} \leq V_{w_R sr_{\{r, t\}}}$. Next we have to show that the preimages of the boundary monomorphisms are equal and coincide with the edge groups of $V_R$. For this we compute $H_{o(e)} \cap \alpha_e(G_e)$ and $H_{t(e)} \cap \omega_e(G_e)$, as $\alpha_e^{-1}(H_{o(e)}) = \alpha_e^{-1}( H_{o(e)} \cap \alpha_e(G_e) )$ and $\omega_e^{-1}( H_{t(e)} ) = \omega_e^{-1}( H_{t(e)} \cap \omega_e(G_e) )$. We compute the following:
	\allowdisplaybreaks
	\begin{align*}
		U_{w_R sr} \cap U_{w_R st} = U_{w_R s} = V_{w_R r_{\{s, t\}}} \cap U_{w_R st} \\
		V_{w_R r_{\{s, t\}}} \cap U_{w_R ts} = U_{w_R t} = U_{w_R tr} \cap U_{w_R ts}
	\end{align*}
	Now the claim follows from Proposition \ref{treeofgroupsinjective}.
\end{proof}

\subsection*{The groups $\mathbf{V_{R, s}}$ and $\mathbf{O_{R, s}}$}

Let $R \in \mathcal{T}_{i, 1}$ be a residue of type $\{s, t\}$ such that $\ell(w_R srs) = \ell(w_R) +3$. Then we define the group $V_{R, s}$ to be the tree product of the sequence of groups with vertex groups
\allowdisplaybreaks
\begin{align*}
	U_{w_R srs}, V_{w_R r_{\{s, t\}}}, U_{w_R tr}
\end{align*}
Moreover, we define the group $O_{R, s}$ to be the tree product of the sequence of groups with vertex groups
\allowdisplaybreaks
\begin{align*}
	U_{w_R srs}, V_{w_R sr_{\{r, t\}}}, U_{w_R r_{\{s, t\}}}, V_{w_R t r_{\{r, s\}}}
\end{align*}
It follows similarly as in Remark \ref{Remark: tree product generated by root group elements} that $V_{R, s}$ and $O_{R, s}$ are generated by suitable $u_{\alpha}$.

\begin{figure}[h]
	\begin{minipage}{0.4\linewidth}
		\includegraphics[scale=0.8]{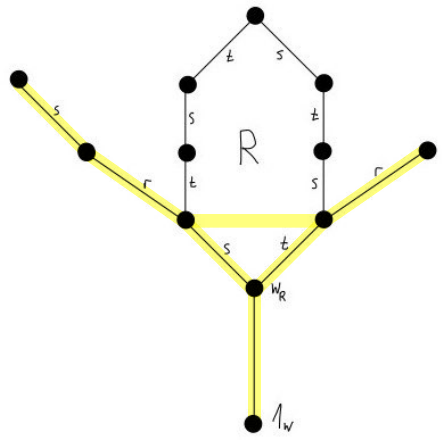}
		\caption{Illustration of the group $V_{R, s}$}
	\end{minipage}
	\begin{minipage}{0.4\linewidth}
		\includegraphics[scale=0.8]{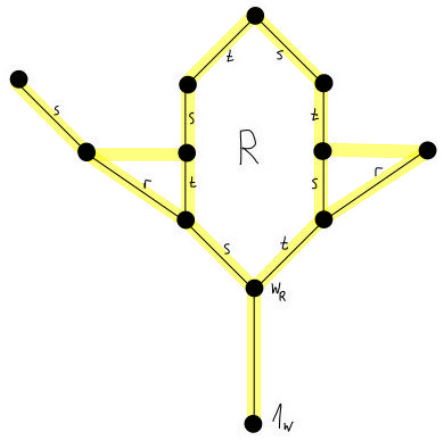}
		\caption{Illustration of the group $O_{R, s}$}
	\end{minipage}
\end{figure}

\begin{lemma}\label{Lemma: V_R,s to O_R,s injective}
	Let $R \in \mathcal{T}_{i, 1}$ be a residue of type $\{s, t\}$ such that $\ell(w_R srs) = \ell(w_R) +3$. Then the canonical homomorphisms $V_R \to V_{R, s}, O_R \to O_{R, s}$ and $V_{R, s} \to O_{R, s}$ are injective. Moreover, we have $V_{R, s} \star_{V_R} O_R \cong U_{w_R srs} \star_{U_{w_R sr}} O_R \cong O_{R, s}$.
\end{lemma}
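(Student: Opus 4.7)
The plan is to first establish the two structural isomorphisms
\[
V_{R,s} \cong U_{w_R srs} \star_{U_{w_R sr}} V_R \quad \text{and} \quad O_{R,s} \cong U_{w_R srs} \star_{U_{w_R sr}} O_R,
\]
from which everything else will follow. For the first, I apply Lemma \ref{folding} to the leftmost edge of $V_{R,s}$, which joins $U_{w_R srs}$ to $V_{w_R r_{\{s,t\}}}$ with edge group $U_{w_R s}$: since $U_{w_R s} \leq U_{w_R sr} \leq U_{w_R srs}$, the lemma inserts a new vertex with group $U_{w_R sr}$ between them, and Proposition \ref{treeproducts} then collapses the resulting three-vertex subtree $U_{w_R sr} \hat{\star} V_{w_R r_{\{s,t\}}} \hat{\star} U_{w_R tr}$, which is precisely $V_R$. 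For the second isomorphism I apply Proposition \ref{treeproducts} directly to $O_{R,s}$, contracting its three rightmost vertices to obtain $O_R$; the surviving edge group is the one between $U_{w_R srs}$ and $V_{w_R sr_{\{r,t\}}}$.

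The technical heart of the proof is verifying that this surviving edge group equals $U_{w_R sr}$. By the paper's convention it equals $\langle u_\alpha : \alpha \in \Phi_{U_{w_R srs}} \cap \Phi_{V_{w_R sr_{\{r,t\}}}} \rangle$, where $\Phi_{V_{w_R sr_{\{r,t\}}}} = \Phi_{U_{w_R sr}} \cup \Phi_{U_{w_R st}}$. The hypothesis $\ell(w_R srs) = \ell(w_R) + 3$ gives $\Phi_{U_{w_R sr}} \subseteq \Phi_{U_{w_R srs}}$, since the only positive root whose wall separates $w_R sr$ from $w_R srs$ is $w_R sr\alpha_s$, and this root contains $w_R sr$. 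Conversely, a positive root $\alpha$ with $w_R srs \notin \alpha$ and $w_R st \notin \alpha$ must satisfy $w_R sr \notin \alpha$: otherwise $\alpha$ would have to be $w_R sr\alpha_s$, but this root contains $w_R st$ because $(w_R sr)^{-1}(w_R st) = rt \in \alpha_s$, using that $\ell(srt) = 3$ in type $(4,4,4)$.

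Once both isomorphisms are established, $V_R \to V_{R,s}$ and $O_R \to O_{R,s}$ are vertex inclusions into amalgamated products and therefore injective by Corollary \ref{AcapBisC}. For $V_{R,s} \to O_{R,s}$ I apply Proposition \ref{treeofgroupsinjective} to the two amalgamated-product forms: the vertex inclusions are the identity on $U_{w_R srs}$ and the embedding $V_R \hookrightarrow O_R$ of Lemma \ref{VRtoORinjective}, and the edge condition reduces to $V_R \cap V_{w_R sr_{\{r,t\}}} = U_{w_R sr}$ inside $O_R$, which follows from the ``in particular'' clause of Proposition \ref{treeofgroupsinjective} applied to Lemma \ref{VRtoORinjective}. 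Finally, using $V_{R,s} \cong U_{w_R srs} \star_{U_{w_R sr}} V_R$ the amalgamated product $V_{R,s} \star_{V_R} O_R$ becomes a sequence of three vertex groups $U_{w_R srs}, V_R, O_R$ with edge groups $U_{w_R sr}, V_R$; Proposition \ref{treeproducts} collapses the trivial amalgamation $V_R \star_{V_R} O_R = O_R$, leaving $U_{w_R srs} \star_{U_{w_R sr}} O_R \cong O_{R,s}$, as required. The main obstacle I expect is the combinatorial edge-group verification in the second paragraph.
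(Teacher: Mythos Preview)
Your proof is correct and follows the same approach as the paper: establish the two amalgamated-product descriptions $V_{R,s}\cong U_{w_Rsrs}\star_{U_{w_Rsr}}V_R$ and $O_{R,s}\cong U_{w_Rsrs}\star_{U_{w_Rsr}}O_R$ via Lemma~\ref{folding} and Proposition~\ref{treeproducts}, then read off all three injectivities (using Lemma~\ref{VRtoORinjective} and Proposition~\ref{treeofgroupsinjective} for $V_{R,s}\to O_{R,s}$) and collapse $(U_{w_Rsrs}\star_{U_{w_Rsr}}V_R)\star_{V_R}O_R$ to obtain the final isomorphism. Your explicit edge-group verification is more than the paper spells out but is exactly the content hidden in its ``$\hat\star$'' convention; note however that the edge condition you need for Proposition~\ref{treeofgroupsinjective} is simply $U_{w_Rsr}\le V_R$ inside $O_R$, which is immediate since $U_{w_Rsr}$ is a vertex group of $V_R$, rather than the stronger $V_R\cap V_{w_Rsr_{\{r,t\}}}=U_{w_Rsr}$ you cite.
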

\begin{proof}
	Note that $V_{R, s} \cong U_{w_R srs} \star_{U_{w_R sr}} V_R$ and $O_{R, s} \cong U_{w_R srs} \star_{U_{w_R sr}} O_R$ by Proposition \ref{treeproducts} and Lemma \ref{folding}. In particular, Proposition \ref{treeproducts} implies that $V_R \to V_{R, s}$ and $O_R \to O_{R, s}$ are injective. Using Proposition \ref{treeofgroupsinjective} and Lemma \ref{VRtoORinjective}, the canonical homomorphism $V_{R, s} \to O_{R, s}$ is injective. We deduce from Proposition \ref{treeproducts}, Remark \ref{Remark: isomorphism preserves amalgamated product} and Lemma \ref{folding} the following:
	\[ O_{R, s}  \cong U_{w_R srs} \star_{U_{w_R sr}} O_R \cong (U_{w_R srs} \star_{U_{w_R sr}} V_R) \star_{V_R} O_R \cong V_{R, s} \star_{V_R} O_R \qedhere \]
\end{proof}

\subsection*{The groups $\mathbf{H_R}$ and $\mathbf{K_{R, s}}$}

Let $R \in \mathcal{T}_{i, 1}$ be of type $\{s, t\}$. We define the group $H_R$ to be the tree product of the sequence of groups with vertex groups
\allowdisplaybreaks
\begin{align*}
	U_{w_Rsr_{\{r, t\}}}, V_{w_R str_{\{r, s\}}}, U_{w_R r_{\{s, t\}}}, V_{w_R ts r_{\{r, t\}}}, U_{w_R t r_{\{r, s\}}}
\end{align*}
Moreover, we define the group $K_{R, s}$ to be the tree product of the sequence of groups with vertex groups
\allowdisplaybreaks
\begin{align*}
	U_{w_R sr_{\{r, t\}}}, V_{w_R str_{\{r, s\}}}, U_{w_R r_{\{s, t\}}}, V_{w_R tr_{\{r, s\}}}
\end{align*}
It follows similarly as in Remark \ref{Remark: tree product generated by root group elements} that $H_R$ and $K_{R, s}$ are generated by suitable $u_{\alpha}$.

\begin{figure}[h]\label{Figure: H_R}
	\centering
	\includegraphics[scale=0.7]{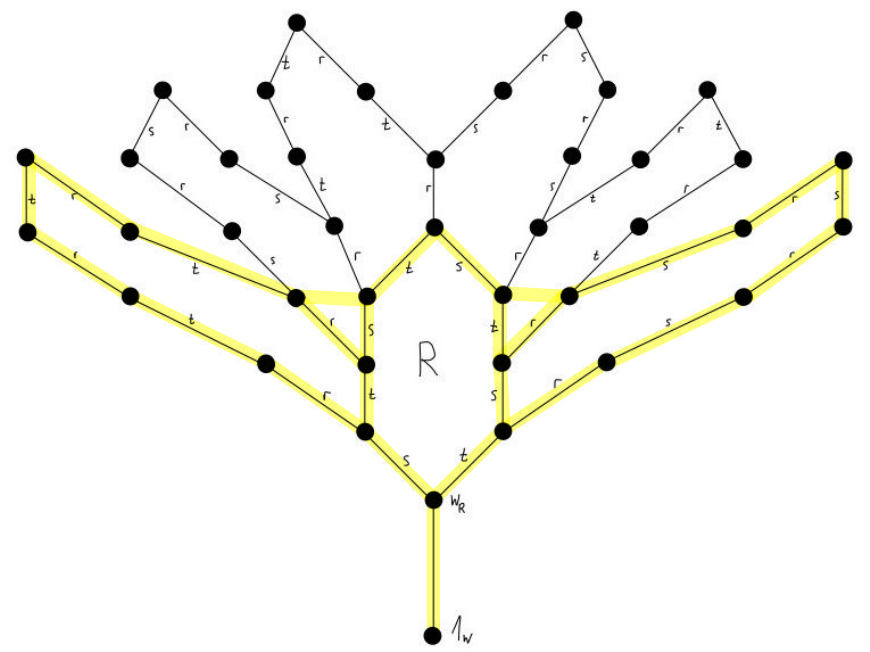}
	\caption{Illustration of the group $H_R$}
\end{figure}

\begin{figure}[h]
	\centering
	\includegraphics[scale=0.7]{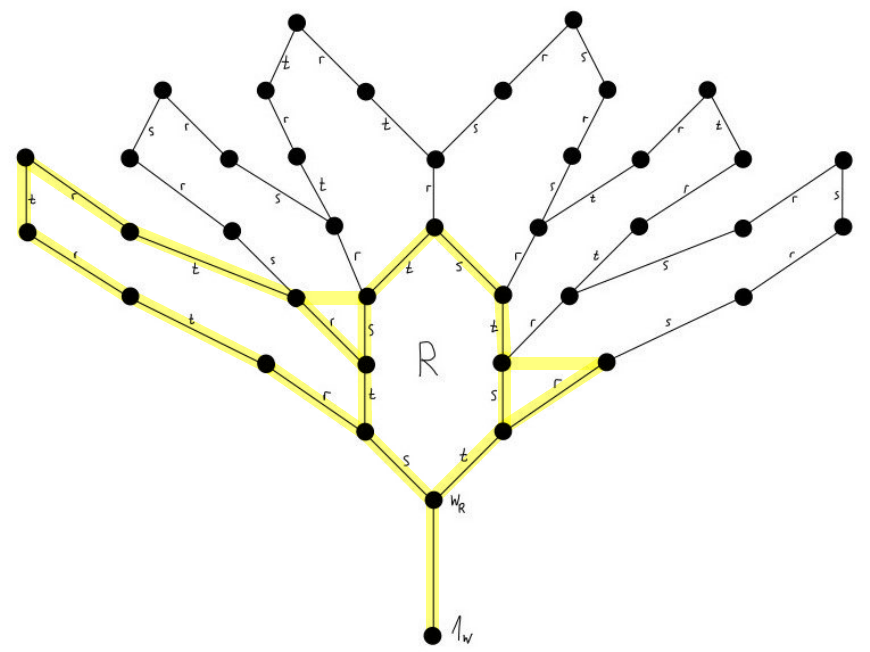}
	\caption{Illustration of the group $K_{R, s}$}
\end{figure}

\begin{lemma}\label{CCleftCright}
	Let $R \in \mathcal{T}_{i, 1}$ be of type $\{s, t\}$. Then the canonical homomorphisms $O_R \to K_{R, s}, K_{R, t}$ are injective and we have $H_R \cong K_{R, s} \star_{O_R} K_{R, t}$.
\end{lemma}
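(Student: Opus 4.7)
The plan is to follow the template of Lemma \ref{Lemma: V_R,s to O_R,s injective}, assembling Proposition \ref{treeproducts}, Lemma \ref{folding}, Proposition \ref{treeofgroupsinjective}, and Remark \ref{Remark: isomorphism preserves amalgamated product}. For the injectivity of $O_R \to K_{R,s}$ I would apply Proposition \ref{treeofgroupsinjective}: the underlying tree of $O_R$ is identified with the subtree $T'$ of the four-vertex tree of $K_{R,s}$ obtained by deleting the first vertex $U_{w_R sr_{\{r,t\}}}$; on $T'$ one builds a tree of groups whose vertex groups are those of $O_R$, with $V_{w_R sr_{\{r,t\}}}\leq V_{w_R str_{\{r,s\}}}$ replacing the first vertex group of the subtree of $K_{R,s}$ and the remaining two vertices unchanged. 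Injectivity of $O_R\to K_{R,t}$ then follows by the $s\leftrightarrow t$ symmetry that already governs the definitions of $K_{R,s}$ and $K_{R,t}$ and of $H_R$.

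The technical heart of this step is the verification of the hypotheses of Proposition \ref{treeofgroupsinjective}, that is, matching the preimages of the new vertex groups under the boundary monomorphisms of $K_{R,s}$ with the edge groups of $O_R$. Since edge groups are generated by root-group elements under the convention $C=\langle u_\alpha : \alpha\in\Phi_A\cap\Phi_B\rangle$, this reduces to purely combinatorial statements about which positive roots $\alpha$ separate $1_W$ from specific elements such as $w_R st$, $w_R sts$, $w_R r_{\{s,t\}}$; here the Coxeter-theoretic tools of Section \ref{Section: Preliminaries}, in particular Lemma \ref{wordsincoxetergroup}, Lemma \ref{Lemma: not both down}, Lemma \ref{mingallinrep} and Lemma \ref{Lemma: Subset contained in certain root}, are used to trace minimal galleries and to recognize coincidences of the form $w\alpha_u = w'\alpha_{u'}$, so that a generator apparently missing from a narrower vertex's root set is in fact already present under a different name (this is the point foreshadowed in Remark \ref{Remark: tree product generated by root group elements}). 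I expect this combinatorial bookkeeping to be the main obstacle, since it is exactly the point where local Weyl-invariance of $\mathcal{M}$ is substantively used.

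For the isomorphism $H_R\cong K_{R,s}\star_{O_R}K_{R,t}$, the previously established injectivities make the right-hand amalgamated product well-defined. Viewing $H_R$ as a five-vertex sequence tree product, I would partition its tree at the central vertex $U_{w_R r_{\{s,t\}}}$ into two subtrees of three vertices each; by Proposition \ref{treeproducts} this exhibits $H_R$ as an amalgamated product of the two subtree tree-products over $U_{w_R r_{\{s,t\}}}$. Then Lemma \ref{folding} inserts along the appropriate edges intermediate vertices with vertex groups $V_{w_R sr_{\{r,t\}}}$ and $V_{w_R tr_{\{r,s\}}}$ (the outer vertex groups of $O_R$), and a second application of Proposition \ref{treeproducts} together with Remark \ref{Remark: isomorphism preserves amalgamated product} reassembles the two halves as $K_{R,s}$ and $K_{R,t}$ amalgamated with $O_R$, exactly paralleling the final chain
\[ O_{R,s}\cong U_{w_R srs}\star_{U_{w_R sr}}O_R\cong(U_{w_R srs}\star_{U_{w_R sr}}V_R)\star_{V_R}O_R\cong V_{R,s}\star_{V_R}O_R \]
at the end of the proof of Lemma \ref{Lemma: V_R,s to O_R,s injective}. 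This yields the desired $H_R\cong K_{R,s}\star_{O_R}K_{R,t}$.
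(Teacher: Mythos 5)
Your injectivity argument for $O_R\to K_{R,s}$ has a genuine gap. You propose to identify the three-vertex tree of $O_R$ with the subtree of $K_{R,s}$ obtained by deleting the first vertex $U_{w_R sr_{\{r,t\}}}$, so that the first vertex group $V_{w_R sr_{\{r,t\}}}$ of $O_R$ is placed inside $V_{w_R str_{\{r,s\}}}$. But $V_{w_R sr_{\{r,t\}}}$ is \emph{not} a subgroup of $V_{w_R str_{\{r,s\}}}$: the root $w_R s\alpha_r$ satisfies $w_R sr\notin w_R s\alpha_r$, so the generator $u_{w_R s\alpha_r}\in U_{w_R sr}\leq V_{w_R sr_{\{r,t\}}}$, whereas $w_R s\alpha_r$ contains $w_R str_{\{r,s\}}$ (since $\ell(r\cdot trsrs)=\ell(trsrs)+1$), so $u_{w_R s\alpha_r}\notin U_{w_R str_{\{r,s\}}}\supseteq V_{w_R str_{\{r,s\}}}$. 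This is not a bookkeeping issue that local Weyl-invariance or root coincidences can repair; the containment simply fails. The paper instead first applies Proposition~\ref{treeproducts} to contract the two \emph{middle} vertices of $K_{R,s}$, obtaining the three-vertex tree with vertex groups $U_{w_R sr_{\{r,t\}}}$, $V_{w_R str_{\{r,s\}}}\hat{\star}U_{w_R r_{\{s,t\}}}$, $V_{w_R tr_{\{r,s\}}}$; all three vertex groups of $O_R$ then embed into the corresponding contracted vertex groups (index $2$, factor of an amalgam, equality), after which the edge-group compatibility needed for Proposition~\ref{treeofgroupsinjective} reduces to the single identity $V_{w_R sr_{\{r,t\}}}\cap U_{w_R str}=U_{w_R st}=U_{w_R r_{\{s,t\}}}\cap U_{w_R str}$.

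Your sketch of the isomorphism $H_R\cong K_{R,s}\star_{O_R}K_{R,t}$ is also problematic as stated, though the starting observation is fine. Writing $H_R$ as the amalgam of the first three and last three vertex groups over the central vertex group $U_{w_R r_{\{s,t\}}}$ is legitimate (via Proposition~\ref{treeproducts} followed by a folding, since the $5$-vertex tree is not literally partitioned into two $3$-vertex subtrees). However, the subsequent step where ``Lemma~\ref{folding} inserts \ldots\ intermediate vertices with vertex groups $V_{w_R sr_{\{r,t\}}}$ and $V_{w_R tr_{\{r,s\}}}$'' does not go through: Lemma~\ref{folding} requires an interpolating subgroup sandwiched between an edge group and a vertex group, and for the amalgamation over $U_{w_R r_{\{s,t\}}}$ one would need, e.g., $U_{w_R r_{\{s,t\}}}\leq V_{w_R tr_{\{r,s\}}}$, which is false (already $u_{w_R t\alpha_r}\notin U_{w_R r_{\{s,t\}}}$ shows they are incomparable). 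The paper avoids this by amalgamating at the \emph{edge} group $U_{w_R tst}$ rather than at a vertex group, and by introducing the auxiliary group $C_0:=V_{w_R sr_{\{r,t\}}}\hat{\star}U_{w_R r_{\{s,t\}}}$, which \emph{does} sit between $U_{w_R tst}$ and each side; the resulting seven-step chain of foldings and regroupings through $C_0$ is not a cosmetic extension of the three-step chain in Lemma~\ref{Lemma: V_R,s to O_R,s injective} but a genuinely different reassembly.
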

\begin{proof}
	Using Proposition \ref{treeproducts} the group $K_{R, s}$ is isomorphic to the tree product of the sequence of groups with vertex groups
	\allowdisplaybreaks
	\begin{align*}
		U_{w_R sr_{\{r, t\}}}, V_{w_R str_{\{r, s\}}} \hat{\star} U_{w_R r_{\{s, t\}}}, V_{w_R tr_{\{r, s\}}}
	\end{align*}
	One easily sees that each vertex group of $O_R$ is contained in the corresponding vertex group of the previous tree product. Considering the preimage of the boundary monomorphisms the following holds (using Corollary \ref{AcapBisC}):
	\allowdisplaybreaks
	\begin{align*}
		V_{w_R sr_{\{r, t\}}} \cap U_{w_R str} = U_{w_R st} = U_{w_R r_{\{s, t\}}} \cap U_{w_R str}
	\end{align*}
	As before, Proposition \ref{treeofgroupsinjective} yields that the canonical homomorphism $O_R \to K_{R, s}$ is injective. Using similar arguments, we obtain that $O_R \to K_{R, t}$ is injective. We define $C_0 := V_{w_R sr_{\{r, t\}}} \hat{\star} U_{w_R r_{\{s, t\}}}$ and note that $U_{w_R tst} \to C_0$ and $C_0 \to O_R$ are injective. Moreover, the computations above imply that $C_0 \to U_{w_R s r_{\{r, t\}}} \hat{\star} V_{w_R str_{\{r, s\}}} \hat{\star} U_{w_R r_{\{s, t\}}}$ is injective. Now the following isomorphisms follow from Proposition \ref{treeproducts}, Remark \ref{Remark: isomorphism preserves amalgamated product} and Lemma \ref{folding}:
	\allowdisplaybreaks
	\begin{align*}
		H_R &\cong \left( U_{w_R s r_{\{r, t\}}} \hat{\star} V_{w_R st r_{\{r, s\}}} \hat{\star} U_{w_R r_{\{s, t\}}} \right) \star_{U_{w_R tst}} \left( V_{w_R ts r_{\{r, t\}}}\hat{\star} U_{w_R t r_{\{r, s\}}} \right) \\
		&\cong \left( U_{w_R s r_{\{r, t\}}} \hat{\star} V_{w_R st r_{\{r, s\}}} \hat{\star} U_{w_R r_{\{s, t\}}} \right) \star_{C_0} C_0 \star_{U_{w_R tst}} \left( V_{w_R ts r_{\{r, t\}}}\hat{\star} U_{w_R t r_{\{r, s\}}} \right) \\
		&\cong \left( U_{w_R s r_{\{r, t\}}} \hat{\star} V_{w_R st r_{\{r, s\}}} \hat{\star} U_{w_R r_{\{s, t\}}} \right) \star_{C_0} K_{R, t} \\
		&\cong \left( U_{w_R s r_{\{r, t\}}} \hat{\star} V_{w_R st r_{\{r, s\}}} \hat{\star} U_{w_R r_{\{s, t\}}} \right) \star_{C_0} O_R \star_{O_R} K_{R, t} \\
		&\cong \left( \left( U_{w_R s r_{\{r, t\}}} \hat{\star} V_{w_R st r_{\{r, s\}}} \hat{\star} U_{w_R r_{\{s, t\}}} \right) \star_{C_0} O_R \right) \star_{O_R} K_{R, t} \\
		&\cong \left( \left( U_{w_R s r_{\{r, t\}}} \hat{\star} V_{w_R st r_{\{r, s\}}} \hat{\star} U_{w_R r_{\{s, t\}}} \right) \star_{C_0} C_0 \star_{U_{w_R ts}} V_{w_R tr_{\{r, s\}}} \right) \star_{O_R} K_{R, t} \\
		&\cong \left( \left( U_{w_R s r_{\{r, t\}}} \hat{\star} V_{w_R st r_{\{r, s\}}} \hat{\star} U_{w_R r_{\{s, t\}}} \right) \star_{U_{w_R ts}} V_{w_R tr_{\{r, s\}}} \right) \star_{O_R} K_{R, t} \\
		&\cong K_{R, s} \star_{O_R} K_{R, t} \qedhere
	\end{align*}
\end{proof}

\begin{lemma}\label{Lemma: J_R,t cong H_R star V_T O_T}
	Let $R \in \mathcal{T}_{i, 1}$ be a residue of type $\{s, t\}$ and let $T = R_{\{r, t\}}(w_R ts)$. Then $T \in \mathcal{T}_{i+2, 1}$ and the canonical homomorphism $V_T \to H_R$ is injective.
\end{lemma}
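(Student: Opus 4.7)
The plan is to embed $V_T$ into $H_R$ by applying Proposition~\ref{treeofgroupsinjective} with subtree $T'$ of the underlying path of $H_R$ consisting of the three rightmost vertices (those supporting the vertex groups $U_{w_R r_{\{s,t\}}}$, $V_{w_R tsr_{\{r,t\}}}$, and $U_{w_R tr_{\{r,s\}}}$). Under this choice the two non-trivial trees of groups in the Proposition will be the restriction of $H_R$'s tree structure to $T'$, and a subtree-of-groups supported on the same subtree $T'$ whose tree product is (isomorphic to) $V_T$.

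First I would verify $T \in \mathcal{T}_{i+2, 1}$. From $R \in \mathcal{T}_{i, 1}$ we have $\ell(w_R s) = \ell(w_R) + 1 = \ell(w_R t)$, so repeated applications of Lemma~\ref{wordsincoxetergroup} with $w' \in \{ts, tst, tsts\}$ yield $\ell(w_R tsr) = \ell(w_R tst) = i+3$ and $\ell(w_R tsrs) = \ell(w_R tsts) = i+4$. Thus $w_R ts$ is the unique chamber of $T = w_R ts \langle r, t \rangle$ of minimum length, so $w_T = w_R ts$ and the defining conditions $\ell(w_T rs) = i+4 = \ell(w_T ts)$ of $\mathcal{T}_{i+2, 1}$ hold.

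Next I would match the three vertex groups of $V_T$ with subgroups of the three rightmost vertex groups of $H_R$. Since $tsts = r_{\{s,t\}}$ in $W$, the group $U_{w_T ts} = U_{w_R tsts}$ coincides with $U_{w_R r_{\{s,t\}}}$; and $V_{w_T r_{\{r,t\}}} = V_{w_R tsr_{\{r,t\}}}$ is literally the fourth vertex group of $H_R$. For the remaining vertex group I would use $srsr = rsrs$ in $W$ to deduce $tsrsr = trsrs$, hence $w_R tsrs$ and $w_R trsrs = w_R tr_{\{r,s\}}$ are $r$-adjacent; since they differ in length by $1$, the chamber $w_R tsrs$ lies on a minimal gallery from $1_W$ to $w_R trsrs$, giving $\Phi(w_R tsrs) \subseteq \Phi(w_R trsrs)$ and hence $U_{w_T rs} = U_{w_R tsrs} \leq U_{w_R tr_{\{r,s\}}}$.

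The main obstacle, and the step I would treat most carefully, is the edge group condition of Proposition~\ref{treeofgroupsinjective}: both edges of $T'$ must carry the same edge group in $H_R$ as in $V_T$, and this common group must also equal $\alpha_e^{-1}(H_{o(e)}) = \omega_e^{-1}(H_{t(e)})$. Concretely, I would show that the $H_R$-edge and $V_T$-edge between $U_{w_R r_{\{s,t\}}}$ and $V_{w_R tsr_{\{r,t\}}}$ both carry $U_{w_R tst}$, and that those between $V_{w_R tsr_{\{r,t\}}}$ and $U_{w_R tr_{\{r,s\}}}$ both carry $U_{w_R tsr}$. This reduces to a short list of root-set containments: $\Phi(w_R tsr) \subseteq \Phi(w_R trsrs)$ (from $(w_R tsr)^{-1}(w_R trsrs) = sr$, length $2$), $\Phi(w_R tst) \subseteq \Phi(w_R tsts)$ (trivial), and the exclusions $w_R ts\alpha_r \notin \Phi(w_R tsts)$ and $w_R ts\alpha_t \notin \Phi(w_R trsrs)$ (both verified by direct length computations via Lemma~\ref{wordsincoxetergroup}). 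Once these identifications are in place, Proposition~\ref{treeofgroupsinjective} yields injectivity of the canonical homomorphism $V_T \to H_R$.
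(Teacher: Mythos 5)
Your proof is correct and follows essentially the same route as the paper's. The paper's version is a two-liner: it invokes Proposition~\ref{treeproducts} to embed the three-vertex subtree product $U_{w_R r_{\{s,t\}}} \hat{\star} V_{w_R tsr_{\{r,t\}}} \hat{\star} U_{w_R tr_{\{r,s\}}}$ into $H_R$, then Proposition~\ref{treeofgroupsinjective} with $T'=T$ to embed $V_T$ into that subtree product; you instead apply Proposition~\ref{treeofgroupsinjective} once with $T'\subsetneq T$, which is equivalent and equally valid. Your explicit verification of $T\in\mathcal{T}_{i+2,1}$, the vertex-group containments, and the edge-group compatibility (via the root-set containments $\Phi(w_R tsr)\subseteq\Phi(w_R trsrs)$, $\Phi(w_R tst)\subseteq\Phi(w_R tsts)$ and the two exclusions) is accurate and supplies the details the paper leaves implicit.
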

\begin{proof}
	Note that $T \in \mathcal{T}_{i+2, 1}$. By Proposition \ref{treeproducts}, $U_{w_R r_{\{s, t\}}} \hat{\star} V_{w_R ts r_{\{r, t\}}} \hat{\star} U_{w_R tr_{\{r, s\}}} \to H_R$ is injective. Using Proposition \ref{treeofgroupsinjective}, we deduce that
	\allowdisplaybreaks
	\begin{align*}
		V_T = U_{w_R tsts} \hat{\star} V_{w_R tsr_{\{r, t\}}} \hat{\star} U_{w_R tsrs} \to U_{w_R r_{\{s, t\}}} \hat{\star} V_{w_R ts r_{\{r, t\}}} \hat{\star} U_{w_R tr_{\{r, s\}}}
	\end{align*}
	is injective and hence also the concatenation $V_T \to H_R$.
\end{proof}

\begin{remark}
	Let $R \in \mathcal{T}_{i, 1}$ be of type $\{s, t\}$ such that $\ell(w_R srs) = \ell(w_R) +3$ and let $T = R_{\{r, t\}}(w_R s)$. In the next lemma we consider $O_{R, s} \star_{V_T} O_T$. Similar as in Remark \ref{Remark: tree product generated by root group elements} we will show that if $x_{\alpha}$ is a generator of $O_{R, s}$ and $y_{\alpha}$ is a generator of $O_T$, then $x_{\alpha} = y_{\alpha}$ holds in $O_{R, s} \star_{V_T} O_T$. It suffices to consider $w_R \alpha_t$ and $w_R t\alpha_r$. As $-w_R \alpha_s \subseteq w_R t\alpha_r$ and $-w_Rs \alpha_r, -w_Rst \alpha_r \subseteq w_R \alpha_t$ by Lemma \ref{mingallinrep}, we deduce that $x_{\alpha}$ is not a generator of $O_T$ for $\alpha \in \{ w_R \alpha_t, w_Rt \alpha_r \}$.
\end{remark}

\begin{lemma}\label{CleftCrightisos}
	Let $R \in \mathcal{T}_{i, 1}$ be of type $\{s, t\}$ such that $\ell(w_R srs) = \ell(w_R) +3$ and let $T = R_{\{r, t\}}(w_R s)$. Then the canonical homomorphisms $V_T \to O_{R, s}$ and $K_{R, s} \to O_{R, s} \star_{V_T} O_T$ are injective and we have $K_{R, s} \cap O_{R, s} = O_R$ in $O_{R, s} \star_{V_T} O_T$.
\end{lemma}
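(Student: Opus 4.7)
For the injectivity of $V_T \to O_{R, s}$, my plan is to apply Proposition~\ref{treeofgroupsinjective} with ambient tree of groups $O_{R, s}$ (a path of four vertex groups) and subtree $T'$ consisting of the first three vertices. At position three I replace $U_{w_R r_{\{s,t\}}}$ by its subgroup $U_{w_R sts}$; this inclusion holds because, as $w_R = \proj_R 1_W$, the hypothesis $\ell(w_R srs) = \ell(w_R)+3$ forces $\ell(w_R stst) = \ell(w_R sts)+1$ and hence $U_{w_R sts} \leq U_{w_R stst} = U_{w_R r_{\{s,t\}}}$. The first two vertex-group conditions are equalities. The edge-preimage conditions of Proposition~\ref{treeofgroupsinjective} reduce to standard intersections of the form $U_{wa} \cap U_{wb} = U_w$ (as used in the proof of Lemma~\ref{VRtoORinjective}) and yield the groups $U_{w_R sr}$ and $U_{w_R st}$, which are exactly the edge groups of $V_T$.

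For $K_{R, s} \to O_{R, s} \star_{V_T} O_T$ injective, I would first note that $V_T \to O_T$ is injective: indeed, the residue $T = R_{\{r,t\}}(w_R s)$ satisfies $w_T = w_R s$, and one checks $T \in \mathcal{T}_{i+1, 1}$ from $\ell(w_R srs) = \ell(w_R)+3$ and the projection identity $\ell(w_R sts) = \ell(w_R)+3$, so Lemma~\ref{VRtoORinjective} applies. The amalgam is therefore well defined. Next I would re-express $O_{R, s} \star_{V_T} O_T$ as a single tree product of a larger tree by refining both sides via Lemma~\ref{folding}: in $O_{R, s}$, subdivide the edge between $V_{w_R sr_{\{r,t\}}}$ and $U_{w_R r_{\{s,t\}}}$ by inserting a new vertex with group $U_{w_R sts}$ (the requirement $U_{w_R st} \leq U_{w_R sts} \leq U_{w_R r_{\{s,t\}}}$ of Lemma~\ref{folding} is satisfied), and refine $O_T$ symmetrically so that the image of $V_T$ appears as a contiguous sub-path. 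Proposition~\ref{treeproducts} together with Remark~\ref{Remark: isomorphism preserves amalgamated product} then identifies $O_{R, s} \star_{V_T} O_T$ with the tree product of the tree obtained by gluing the two refinements along their common $V_T$-subpath. In this enlarged tree the four vertex groups of $K_{R, s}$ appear as subgroups of four adjacent vertex groups (two coming from the refined $O_T$ and two from the refined $O_{R, s}$), and a further application of Proposition~\ref{treeofgroupsinjective} delivers the injectivity.

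For the intersection $K_{R, s} \cap O_{R, s} = O_R$ in $O_{R, s} \star_{V_T} O_T$, I would apply Corollary~\ref{intersectionwithasubtree} to the enlarged tree of groups constructed in step two, letting $T'$ be the sub-path realising $O_{R, s}$ and $H_v$ the intersection of $K_{R, s}$ with each vertex group $G_v$. Vertex by vertex, these intersections are trivial at $U_{w_R srs}$, equal to $V_{w_R sr_{\{r,t\}}}$ at the second vertex (via $V_{w_R sr_{\{r,t\}}} \leq U_{w_R sr_{\{r,t\}}} \in K_{R, s}$), equal to $U_{w_R sts}$ at the inserted vertex, and the full vertex groups at $U_{w_R r_{\{s,t\}}}$ and $V_{w_R tr_{\{r,s\}}}$; reassembled, these are precisely the vertex groups of the tree of groups defining $O_R$. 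The main obstacle I expect is the bookkeeping of step two: after refining with Lemma~\ref{folding}, one must carefully verify that each implicit edge group in the enlarged tree, defined as an intersection of neighbouring root-group generator sets, equals exactly what Proposition~\ref{treeofgroupsinjective} and Corollary~\ref{intersectionwithasubtree} require. This ultimately reduces to identities of the form $U_{wa} \cap U_{wb} = U_w$, and is made possible by the standing hypothesis $\ell(w_R srs) = \ell(w_R)+3$ together with Lemma~\ref{Lemma: not both down}.
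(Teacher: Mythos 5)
Your step for $V_T \to O_{R,s}$ works (applying Proposition~\ref{treeofgroupsinjective} directly to the first three vertices of $O_{R,s}$ with $U_{w_R sts}$ in place of $U_{w_R r_{\{s,t\}}}$), and is a legitimate variant of the paper's route via Lemma~\ref{folding} plus Proposition~\ref{treeproducts}.

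The second step, however, has a real gap. You propose to refine $O_T$ ``symmetrically so that the image of $V_T$ appears as a contiguous sub-path,'' and then glue the refined $O_{R,s}$ and refined $O_T$ along that sub-path. This is not achievable: the middle vertex group of $V_T$ is $V_{w_R s r_{\{r,t\}}}$, a proper subgroup of the middle vertex group $U_{w_R s r_{\{r,t\}}}$ of $O_T$. Lemma~\ref{folding} only subdivides edges (inserting a vertex whose group sits between an edge group and an adjacent vertex group), so any refinement of $O_T$ still contains $U_{w_R s r_{\{r,t\}}}$ as a vertex group, and any contracted sub-path passing through the middle of $O_T$ contains it too. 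Thus the contracted sub-path of any refinement of $O_T$ that one might try to identify with $V_T$ is genuinely larger than $V_T$, and a gluing along it would produce a different amalgamated product than $O_{R,s} \star_{V_T} O_T$. The paper avoids this by a different mechanism: $V_T$ is realised as a contracted subtree only on the $O_{R,s}$ side (after inserting $U_{w_R sts}$), which exhibits $O_{R,s} \star_{V_T} O_T$ as the sequence $V_{w_R t r_{\{r,s\}}}$, $U_{w_R r_{\{s,t\}}}$, $V_T$, $O_T$, with edge groups $U_{w_R t}$, $U_{w_R sts}$, $V_T$; one then \emph{absorbs} $V_T$ into $O_T$ via the contraction isomorphism $V_T \star_{V_T} O_T \cong O_T$ (Lemma~\ref{folding} applied in the collapsing direction), and only afterwards expands $O_T$ into its three vertex groups. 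The result is the five-vertex sequence $V_{w_R t r_{\{r,s\}}}$, $U_{w_R r_{\{s,t\}}}$, $V_{w_R s t r_{\{r,s\}}}$, $U_{w_R s r_{\{r,t\}}}$, $V_{w_R sr r_{\{s,t\}}}$, whose first four vertices form $K_{R,s}$, so Proposition~\ref{treeproducts} (not~\ref{treeofgroupsinjective}) gives injectivity. The crucial move is the one-vertex collapse $V_T \star_{V_T} O_T \cong O_T$, which your ``glue two refinements'' picture has no room for.

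The third step inherits the gap. Moreover, even in the correct five-vertex tree, $O_{R,s}$ is not a contracted subtree (e.g.\ $U_{w_R srs}$ is a proper subgroup of $V_{w_R sr r_{\{s,t\}}}$), so Corollary~\ref{intersectionwithasubtree} with $T'$ being ``the sub-path realising $O_{R,s}$'' does not apply as stated. The paper instead rewrites $O_{R,s} \cong O_R \star_{U_{w_R sr}} U_{w_R srs}$ and $O_{R,s} \star_{V_T} O_T \cong K_{R,s} \star_{U_{w_R srt}} V_{w_R sr r_{\{s,t\}}}$ as two-vertex trees, verifies $O_R \cap U_{w_R srt} = U_{w_R sr}$ inside $K_{R,s}$ (using $O_R \cap U_{w_R s r_{\{r,t\}}} = V_{w_R s r_{\{r,t\}}}$), and then applies Proposition~\ref{treeofgroupsinjective} to conclude $O_{R,s} \cap K_{R,s} = O_R$. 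You would need to substitute an argument of this form for your intended appeal to Corollary~\ref{intersectionwithasubtree}.
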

\begin{proof}
	We have $O_{R, s} \cong V_T \star_{U_{w_R sts}} U_{w_R r_{\{s, t\}}} \hat{\star} V_{w_R t r_{\{r, s\}}}$ by Lemma \ref{folding} and Proposition \ref{treeproducts}. Now Proposition \ref{treeproducts} yields that the mapping $V_T \to O_{R, s}$ is injective. This, together with Proposition \ref{treeproducts}, Remark \ref{Remark: isomorphism preserves amalgamated product}, Lemma \ref{folding} and Lemma \ref{VRtoORinjective} yields the following isomorphisms:
	\allowdisplaybreaks
	\begin{align*}
		O_{R, s} \star_{V_T} O_T &\cong \left( V_T \star_{U_{w_R sts}} U_{w_R r_{\{s, t\}}} \hat{\star} V_{w_R t r_{\{r, s\}}} \right) \star_{V_T} O_T \\
		&\cong V_{w_R t r_{\{r, s\}}} \hat{\star} U_{w_R r_{\{s, t\}}} \star_{U_{w_R sts}} V_T \star_{V_T} O_T \\
		&\cong V_{w_R t r_{\{r, s\}}} \hat{\star} U_{w_R r_{\{s, t\}}} \star_{U_{w_R sts}} \left( V_{w_R str_{\{r, s\}}} \hat{\star} U_{w_R sr_{\{r, t\}}} \hat{\star} V_{w_R srr_{\{s, t\}}} \right) \\
		&\cong K_{R, s} \star_{U_{w_R srt}} V_{w_R srr_{\{s, t\}}}
	\end{align*}
	For the second claim we first want to show that the canonical map $O_R \star_{U_{w_R sr}} U_{w_R srs} \to K_{R, s} \star_{U_{w_R srt}} V_{w_R sr r_{\{s, t\}}}$	is injective. By Lemma \ref{CCleftCright} we know that $O_R \to K_{R, s} $ is injective. Moreover, $U_{w_R srs} \leq V_{w_R srr_{\{s, t\}}}$.
	
	Note that Proposition \ref{treeofgroupsinjective} implies $O_R \cap U_{w_R sr_{\{r, t\}}} = V_{w_R sr_{\{r, t\}}}$ in $K_{R, s}$ and hence 
	\[ O_R \cap U_{w_R srt} = O_R \cap U_{w_R srt} \cap V_{w_R sr_{\{r, t\}}} = O_R \cap U_{w_R sr} = U_{w_R sr}. \]
	Considering the preimage of the boundary monomorphisms the following hold:
	\allowdisplaybreaks
	\begin{align*}
		O_R \cap U_{w_R srt} = U_{w_R sr} = U_{w_R srs} \cap U_{w_R srt}
	\end{align*}
	As before, Proposition \ref{treeofgroupsinjective} implies that $O_R \star_{U_{w_R sr}} U_{w_R srs} \to K_{R, s} \star_{U_{w_R srt}} V_{w_R srr_{\{s, t\}}}$ is injective. Note that $O_{R, s} \cong O_R \star_{U_{w_R sr}} U_{w_R srs}$ by Lemma \ref{Lemma: V_R,s to O_R,s injective}.  Now Proposition \ref{treeofgroupsinjective} yields $O_{R, s} \cap K_{R, s} = O_R$. This finishes the claim.
\end{proof}

\subsection*{Tree products as subgroups of certain groups}

For $w_1, w_2 \in W$ we define $w_1 \prec w_2$ if $\ell(w_1) + \ell(w_1^{-1} w_2) = \ell(w_2)$. For any $w\in W$ we put $C(w) := \{ w' \in W \mid w' \prec w \}$.

\begin{definition}\label{Definition: Gi}
	We let
	\allowdisplaybreaks
	\begin{align*}
		&C_0 := \bigcup\nolimits_{S = \{r, s, t\}} \left( C(r_{\{s, t\}}) \cup C(rr_{\{s, t\}}) \right), \\
		&D_0 := \{ w_R r_{\{s, t\}} \mid R \text{ is of type } \{s, t\}, w_Rs, w_Rt \in C_0 \}
	\end{align*}
	and define $G_0$ to be the direct limit of the inductive system formed by the groups $U_w$ and $V_{w'}$ for $w \in C_0, w' \in D_0$, together with the natural inclusions $U_w \to U_{ws}$ if $\ell(ws) = \ell(w) +1$ and $U_{w_Rs} \to V_{w_Rr_{\{s, t\}}}$.
\end{definition}

\begin{remark}\label{Remark: G_i is generated by x_alpha}
	Note that $G_0$ is generated by elements $x_{\alpha, w}$ and $y_{\alpha, w'}$ for $w\in C_0$ and $w' \in D_0$, where $x_{\alpha, w}$ is a generator of $U_w$ and $y_{\alpha, w'}$ is a generator of $V_{w'}$. We first note that for every $w' = w_R r_{\{s, t\}}$ and every $\alpha \in \Phi_+$ with $w_R s \notin \alpha$, we have $x_{\alpha, w_R s} = y_{\alpha, w'}$ in $G_0$. Thus $G_0 = \langle x_{\alpha, w} \mid \alpha \in \Phi_+, w\in C_0, w \notin \alpha \rangle$. Moreover, we note that $G_0 = \langle x_{\alpha, r_{\{s, t\}}}, x_{\beta, rr_{\{s, t\}}} \mid \alpha, \beta \in \Phi_+, r_{\{s, t\}} \notin \alpha, rr_{\{s, t\}} \notin \beta \rangle$.
	
	There are twelve generators of the form $x_{\alpha, r_{\{s, t\}}}$ and $15$ generators of the form $x_{\beta, rr_{\{s, t\}}}$. Note, however, that each generator $x_{\alpha, r_{\{s, t\}}}$ coincides with some generator of the form $x_{\beta, rr_{\{s, t\}}}$. Thus $G_0 = \langle x_{\beta, rr_{\{s, t\}}} \mid \beta \in \Phi_+, rr_{\{s, t\}} \notin \beta \rangle$. Note that $\{ \alpha \in \Phi_+ \mid C_0 \not\subseteq \alpha \}$ contains also $15$ elements. This implies $G _0 = \langle x_{\alpha} \mid  \alpha \in \Phi_+, C_0 \not\subseteq \alpha \rangle$.
\end{remark}

\begin{lemma}\label{Lemma: G1 to mathcalG homomorphism}
	For all $w\in C_0$ and $w' \in D_0$ the canonical homomorphisms $U_w, V_{w'} \to G_0$ are injective. Moreover, for all $s\neq t \in S$ the canonical homomorphism $V_{R_{\{s, t\}}(1_W), s} \to G_0$ is injective.
\end{lemma}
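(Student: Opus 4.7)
The plan is to realize $G_0$ as the tree product of a suitable finite tree of groups $\mathbb{G}$, after which all three injectivity claims follow from Proposition \ref{treeproducts} and Proposition \ref{treeofgroupsinjective}. The key combinatorial fact to be exploited is that the inductive system defining $G_0$ is close to a tree: its underlying graph on vertex groups $U_w$ ($w \in C_0$) and $V_{w'}$ ($w' \in D_0$) has some cycles, but these only involve the small subgroups $U_w$ for short $w$, and these cycles can be absorbed into the vertex groups of $\mathbb{G}$.

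First, I would identify the tree of groups $\mathbb{G}$ whose maximal vertex groups include the six groups $V_{R_{\{s, t\}}(1_W), s}$, one for each ordered pair $s \neq t$ in $S$. These are joined along common subgroups dictated by the inductive system: for example, $V_{R, s}$ and $V_{R, t}$ with $R = R_{\{s, t\}}(1_W)$ share the subgroup $V_{r_{\{s, t\}}}$, while vertex groups whose indexing residues share a simple reflection are joined along an appropriate $U_w$. On each edge of $\mathbb{G}$, the hypothesis $\alpha_e^{-1}(H_{o(e)}) = \omega_e^{-1}(H_{t(e)})$ of Proposition \ref{treeofgroupsinjective} is to be verified by explicit intersection computations analogous to those in Lemma \ref{VRtoORinjective} and Lemma \ref{CleftCrightisos}.

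Next, I would show that the tree product $G_\mathbb{G}$ is canonically isomorphic to $G_0$. One direction is automatic: the relations defining $G_\mathbb{G}$ all hold in $G_0$ because each vertex group already sits inside $G_0$ as the image of its generators, so the universal property of $G_\mathbb{G}$ supplies a homomorphism $G_\mathbb{G} \to G_0$. For the converse, one checks that every inclusion in the inductive system defining $G_0$—namely $U_w \to U_{ws}$ when $\ell(ws) = \ell(w) + 1$ and $U_{w_R s} \to V_{w_R r_{\{s, t\}}}$—is already a consequence of the tree product relations in $G_\mathbb{G}$. This reduces, case by case, to the statement that each $U_w$ ($w \in C_0$) and each $V_{w'}$ ($w' \in D_0$) lies inside some vertex group of $\mathbb{G}$.

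Once $G_0 \cong G_\mathbb{G}$ is established, Proposition \ref{treeproducts} gives the injectivity of every vertex group, in particular of each $V_{R_{\{s, t\}}(1_W), s}$, which is the final assertion of the lemma. For arbitrary $U_w$ with $w \in C_0$ and $V_{w'}$ with $w' \in D_0$, I would then invoke Proposition \ref{treeofgroupsinjective} or Corollary \ref{intersectionwithasubtree} applied to the subtree consisting of a single vertex group containing the subgroup in question. The main obstacle is the combinatorial bookkeeping in the first step: one must choose $\mathbb{G}$ so that it is genuinely a tree (no unexpected cycles remain) and so that every edge group coincides with the intersection of its two endpoint vertex groups. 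This rests on the root-system combinatorics of type $(4, 4, 4)$ recorded in Lemma \ref{wordsincoxetergroup} through Lemma \ref{Lemma: Subset contained in certain root}, together with the enumeration of the roots $\alpha$ with $C_0 \not\subseteq \alpha$ from Remark \ref{Remark: G_i is generated by x_alpha}.
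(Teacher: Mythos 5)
Your approach is genuinely different from the paper's, and it has a serious gap. The paper's proof does not try to understand the internal structure of $G_0$ at all. Instead it constructs a homomorphism $G_0 \to \mathcal{G}$, where $\mathcal{G}$ is the split Kac-Moody group of type $(4,4,4)$ over $\FF_2$ from Example~\ref{exampleKM444}. The point is that, because every element of $C_0$ is short and $\mathcal{M}$ is locally Weyl-invariant, the commutator relations defining each $U_w$ and $V_{w'}$ are the same as the (explicitly computed) commutator relations in $\mathcal{G}$, so the universal property of the direct limit gives $G_0 \to \mathcal{G}$ making the diagram $V_{R,s} \to G_0 \to \mathcal{G}$ commute. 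Injectivity of $V_{R,s} \to \mathcal{G}$ is exactly Theorem~\ref{theoremsubgroupKM} --- the main theorem of the paper, proved geometrically via the action of $U_+$ on the twin building --- and injectivity of $V_{R,s} \to G_0$ follows. The concrete target $\mathcal{G}$ is the external anchor that breaks what would otherwise be a circular argument.

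Your plan has no such anchor, and this is where it fails. You propose to realize $G_0$ directly as a tree product with the six groups $V_{R_{\{s,t\}}(1_W),s}$ among the vertex groups and then invoke Proposition~\ref{treeproducts}. But Proposition~\ref{treeproducts} yields injectivity of vertex groups only after you know that $G_0$ is presented by that tree of groups, and establishing this presentation is not a matter of bookkeeping: it requires knowing how the various $U_w$ and $V_{w'}$ sit inside $G_0$ and intersect each other, which is precisely the content of the lemma (and of Theorem~\ref{theoremsubgroupKM} itself). The paper's own later structure results for $G_0$ --- Lemma~\ref{OtoG-1} and Lemma~\ref{OtoG0}, giving $G_{-1} \cong G_{\{s,t\}} \star_{V_{R,s}} O_{R,s}$ and $G_0 \cong \star_{G_{-1}} D_T$ --- are only derived \emph{after} Lemma~\ref{Lemma: G1 to mathcalG homomorphism} is in hand, and the resulting tree of groups has vertex groups $G_{-1}$, $D_T$, etc., not the $V_{R,s}$'s you propose. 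The claim that the cycles in the inductive system ``can be absorbed into the vertex groups'' is exactly the hard part, and nothing in your outline (or in Propositions~\ref{treeproducts}, \ref{treeofgroupsinjective}, or the listed combinatorial lemmas) supplies it. In short: you have swapped the one genuinely hard ingredient --- the geometric injectivity result $V_{R,s} \hookrightarrow \mathcal{G}$ --- for an equally hard and unproved structural claim about $G_0$, and the argument cannot get started.
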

\begin{proof}
	We abbreviate $R := R_{\{s, t\}}(1_W)$. Before we prove the claim we show that we have a canonical homomorphism $V_{R, s} \to G_0$. By Remark \ref{Remark: G_i is generated by x_alpha} it suffices to show that $srs, tr \in C_0$. But this holds by definition.
	
	Now we prove the claim. Let $\mathcal{D} = (\mathcal{G}, (U_{\alpha})_{\alpha \in \Phi})$ be the RGD-system associated with the split Kac-Moody group of type $(4, 4, 4)$ over $\FF_2$ as in Example \ref{exampleKM444}. We first show that we have canonical homomorphisms $U_w \to \mathcal{G}$ for each $w\in C_0$. Suppose $\alpha \in \Phi_+$ with $w\notin \alpha$. We show that the canonical mappings $x_{\alpha} \mapsto x_{\alpha} \in U_{\alpha} \leq \mathcal{G}$ extend to homomorphisms $U_w \to \mathcal{G}$. Let $\{\alpha, \beta\}$ be a pair of prenilpotent positive roots, let $w\in C_0$ and let $G \in \mathrm{Min}(w)$ be such that $\alpha \leq_G \beta \in \Phi(G)$. Suppose $o(r_{\alpha} r_{\beta}) < \infty$. As $\mathcal{M}$ is locally Weyl-invariant, we have 
	\[ M_{\alpha, \beta}^G = \begin{cases}
		(\alpha, \beta) & \text{ if }\vert (\alpha, \beta) \vert = 2, \\
		\emptyset & \text{ else}.
	\end{cases} \]
	We have seen in Example \ref{exampleKM444} that $[x_{\alpha}, x_{\beta}] = \prod_{\gamma \in M_{\alpha, \beta}^G} u_{\gamma}$ is also a relation in $\mathcal{G}$. Suppose now $o(r_{\alpha} r_{\beta}) = \infty$ and hence $\alpha \subsetneq \beta$. As $w\in C_0$, we deduce $(\alpha, \beta) = \emptyset$ and hence $[x_{\alpha}, x_{\beta}] = \prod_{\gamma \in M_{\alpha, \beta}^G} u_{\gamma} = 1$ does also hold in $\mathcal{G}$ by Example \ref{exampleKM444}. This implies that the mappings $x_{\alpha} \mapsto x_{\alpha}$ extend to a homomorphism $U_w \to \mathcal{G}$. To show that the mappings $x_{\alpha} \mapsto x_{\alpha}$ do also extend to a homomorphism $V_{w_R r_{\{u, v\}}} \to \mathcal{G}$, we have to show that the subgroup in $\mathcal{G}$ generated by $x_{w_R \alpha_u}, x_{w_R \alpha_v}$ has at most $8$ elements. As this is true, $x_{\alpha} \mapsto x_{\alpha}$ extend to a homomorphism $V_{w_R r_{\{u, v\}}} \to \mathcal{G}$. By definition the following diagrams commute:
	\begin{center}
		\begin{tikzcd}
			U_w \arrow[r] \arrow[rd] & U_{wu} \arrow[d] & \\
			&\mathcal{G}
		\end{tikzcd}
		\begin{tikzcd}
			U_{w_R u} \arrow[r] \arrow[rd] & V_{w_R r_{\{u, v\}}} \arrow[d] \\
			&\mathcal{G}
		\end{tikzcd}
	\end{center}
	The universal property of direct limits yields a unique homomorphism $G_0 \to \mathcal{G}$ extending $U_w, V_{w'} \to \mathcal{G}$. Note that $V_{R, s} \to \mathcal{G}$ is an injective homomorphism by Theorem \ref{theoremsubgroupKM}. Note that the following diagram commutes:
	\begin{center}
		\begin{tikzcd}
			V_{R, s} \arrow[r] \arrow[rd] & G_0 \arrow[d] \\
			& \mathcal{G}
		\end{tikzcd}
	\end{center}
	As $V_{R, s} \to \mathcal{G}$ is injective, $V_{R, s} \to G_0$ is injective as well and we are done.
\end{proof}

\begin{definition}
	\begin{enumerate}[label=(\alph*)]
		\item We let
		\allowdisplaybreaks
		\begin{align*}
			C_{-1} = \bigcup\nolimits_{s\neq t\in S} C(r_{\{s, t\}}) &&\text{and} && D_{-1} := \{ w_R r_{\{s, t\}} \mid R \text{ is of type } \{s, t\}, w_R s, w_R t \in C_{-1} \}
		\end{align*}
		and define $G_{-1}$ to be the direct limit of the groups $U_w, V_{w'}$ with $w\in C_{-1}, w' \in D_{-1}$ as in Definition \ref{Definition: Gi}.
		
		\item For $S = \{r, s, t\}$ we let
		\allowdisplaybreaks
		\begin{align*}
			C_r := C(r_{\{r, s\}}) \cup C(r_{\{r, t\}}) &&\text{and} && D_r := \{ w_R r_{\{u, v\}} \mid R \text{ is of type } \{u, v\}, w_R u, w_R v \in C_r \}
		\end{align*}
		and define $G_{\{s, t\}}$ to be the direct limit of the groups $U_w, V_{w'}$ with $w\in C_r, w' \in D_r$ as in Definition \ref{Definition: Gi}.
	\end{enumerate}
\end{definition}

\begin{remark}\label{Remark: generating set of G_0}
	\begin{enumerate}[label=(\alph*)]
		\item We note that there are nine roots $\alpha \in \Phi_+$ with the property $C_{-1} \not\subseteq \alpha$. Moreover, $G_{-1}$ is generated by $x_{\alpha, \{s,t\}}$ where $\alpha \in \Phi_+$ and $r_{\{s, t\}} \notin \alpha$. Thus $G_{-1}$ is generated by twelve elements. As $x_{\alpha_s, \{r, s\}} = x_{\alpha_s, \{s, t\}}$ in $G_{-1}$ for $S = \{r, s, t\}$, we deduce that $G_{-1}$ is generated by nine elements. In particular, the generator $x_{\alpha, w}$ does not depend on $w$. A similar result holds for $G_{\{s, t\}}$, which is generated by seven elements.
		
		\item For later references it will be useful to state the precise subsets $D_r, D_{-1}$ and $D_0$ here:
		\allowdisplaybreaks
		\begin{align*}
			&D_r = \{ r_{\{r, s\}}, r_{\{s, t\}}, r_{\{r, t\}}, rr_{\{s, t\}} \}, \\
			&D_{-1} = \{ r_{\{r, s\}}, r_{\{s, t\}}, r_{\{r, t\}}, rr_{\{s, t\}}, sr_{\{r, t\}}, tr_{\{r, s\}} \}, \\
			&D_0 = D_{-1} \cup \{ rs r_{\{r, t\}}, rt r_{\{r, s\}}, sr r_{\{s, t\}}, st r_{\{r, s\}}, tr r_{\{s, t\}}, ts r_{\{r, t\}} \}.
		\end{align*}
	\end{enumerate}	
\end{remark}

\begin{lemma}\label{OtoG-1}
	Let $s \neq t \in S$ and let $R := R_{\{s, t\}}(1_W)$. Then $V_{R, s} \to G_{\{s, t\}}$ is injective and $G_{-1} \cong G_{\{s, t\}} \star_{V_{R, s}} O_{R, s}$.
\end{lemma}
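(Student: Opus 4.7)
The plan is to split the lemma into its two parts and treat each via the universal properties of direct limits, tree products, and amalgamated products.

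For the injectivity of $V_{R,s} \to G_{\{s,t\}}$, I will use that $C_r \subseteq C_0$ and $D_r \subseteq D_0$, and that every natural inclusion in the inductive system defining $G_{\{s,t\}}$ also appears in the system for $G_0$. The universal property of the direct limit then supplies a canonical homomorphism $G_{\{s,t\}} \to G_0$, and the composition $V_{R,s} \to G_{\{s,t\}} \to G_0$ coincides with the canonical map $V_{R,s} \to G_0$, which is injective by Lemma~\ref{Lemma: G1 to mathcalG homomorphism}. Hence $V_{R,s} \to G_{\{s,t\}}$ is injective, and the amalgamated product $P := G_{\{s,t\}} \star_{V_{R,s}} O_{R,s}$ is well-defined.

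For $G_{-1} \cong P$ I plan to construct mutually inverse homomorphisms. From the inclusions $C_r \subseteq C_{-1}$ and $D_r \subseteq D_{-1}$ I obtain a canonical map $G_{\{s,t\}} \to G_{-1}$; since the vertex groups $U_{srs}$, $V_{sr_{\{r,t\}}}$, $U_{r_{\{s,t\}}}$, $V_{tr_{\{r,s\}}}$ of $O_{R,s}$ correspond to elements of $C_{-1}$ or $D_{-1}$, the universal property of the tree product yields $O_{R,s} \to G_{-1}$; these maps agree on $V_{R,s}$ by construction, producing $\phi \colon P \to G_{-1}$ from the universal property of the amalgamated product. For the inverse $\psi \colon G_{-1} \to P$ I will invoke the universal property of the direct limit and define, for each $w \in C_{-1}$ and each $w' \in D_{-1}$, maps $\psi_w\colon U_w \to P$ and $\psi_{w'}\colon V_{w'} \to P$ as follows: route through $G_{\{s,t\}}$ whenever $w \in C_r$ (respectively $w' \in D_r$); otherwise $w \in C_{-1} \setminus C_r \subseteq \{st, ts, sts, tst, stst\}$ and I route through $U_w \hookrightarrow U_{r_{\{s,t\}}} \hookrightarrow O_{R,s}$, respectively $w' \in \{sr_{\{r,t\}}, tr_{\{r,s\}}\}$ and I use the direct inclusion of $V_{w'}$ as a vertex group of $O_{R,s}$.

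The main obstacle will be verifying that $\psi$ is well-defined, that is, that these choices respect every inclusion in the inductive system defining $G_{-1}$. Inclusions that remain inside one of the two regions are immediate from the definitions. Inspection based on the explicit lists in Remark~\ref{Remark: generating set of G_0} reveals that the only genuine crossings are $U_s \hookrightarrow U_{st}$, $U_t \hookrightarrow U_{ts}$, $U_{sr} \hookrightarrow V_{sr_{\{r,t\}}}$ and $U_{tr} \hookrightarrow V_{tr_{\{r,s\}}}$. In each of these cases the smaller group is contained in a vertex group of $V_{R,s}$ (namely $V_{r_{\{s,t\}}}$ for the first two and $U_{srs}$, $U_{tr}$ for the last two), so the amalgamation over $V_{R,s}$ together with the internal edge-group identifications inside $O_{R,s}$ force the two candidate images in $P$ to coincide. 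Once $\psi$ is in place, $\phi$ and $\psi$ invert each other on the defining vertex groups of $G_{-1}$ and on the factors $G_{\{s,t\}}$, $O_{R,s}$ of $P$, which completes the proof.
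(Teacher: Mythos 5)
Your proof is correct and follows essentially the same route as the paper: establish injectivity of $V_{R,s}\to G_{\{s,t\}}$ by factoring through $G_0$ and invoking Lemma~\ref{Lemma: G1 to mathcalG homomorphism}, then build mutually inverse maps $P\to G_{-1}$ and $G_{-1}\to P$ from universal properties, routing $U_w, V_{w'}$ through $G_{\{s,t\}}$ when $w\in C_r$ (resp.\ $w'\in D_r$) and through $O_{R,s}$ otherwise. The one place where your write-up differs in emphasis is the well-definedness of $\psi$: you handle it by explicitly listing the four inductive-system morphisms that cross between the two regions ($U_s\hookrightarrow U_{st}$, $U_t\hookrightarrow U_{ts}$, $U_{sr}\hookrightarrow V_{sr_{\{r,t\}}}$, $U_{tr}\hookrightarrow V_{tr_{\{r,s\}}}$) and checking each against the amalgam $V_{R,s}$ and the edge-group identifications inside $O_{R,s}$, whereas the paper handles the same point more compactly by counting generators ($7+7-5=9$) and matching them bijectively with the set $\{\alpha\in\Phi_+\mid C_{-1}\not\subseteq\alpha\}$, after which all the maps $x_{\alpha,w}\mapsto x_\alpha$ commute automatically. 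Both arguments verify the same compatibility; yours is a bit more hands-on, the paper's a bit slicker. No gap.
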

\begin{proof}
	As before, the assignments $x_{\alpha} \mapsto x_{\alpha}$ extend to homomorphisms $\pi: G_{\{s, t\}} \to G_0$ and $G_{\{s, t\}} \to G_{-1}$. Note that $srs, tr \in C_r \subseteq C_0$ and hence we have canonical homomorphisms $\phi: V_{R, s} \to G_{\{s, t\}}$ and $\psi: V_{R, s} \to G_0$. As $\psi = \pi \circ \phi$, Lemma \ref{Lemma: G1 to mathcalG homomorphism} implies that $\phi$ is injective. We abbreviate $H := G_{\{s, t\}} \star_{V_{R, s}} O_{R, s}$ (cf.\ Lemma \ref{Lemma: V_R,s to O_R,s injective}). Note that for each $w\in C(srs) \cup C(tr)$ the following diagram commuts:
	\begin{center}
		\begin{tikzcd}
			U_w \arrow[r] \arrow[d] & O_{R, s} \arrow[d] \\
			G_{\{s, t\}} \arrow[r] & G_{-1}
		\end{tikzcd}
	\end{center}
	The universal property of direct limits implies that there exists a unique homomorphism $H \to G_{-1}$. Now we want to construct a homomorphism $G_{-1} \to H$. Suppose that $S = \{r, s, t\}$. At first we recall that $G_{\{s, t\}}$ is generated by the seven elements $\{ x_{\alpha, G} \mid \alpha \in \Phi_+, C_r \not\subseteq \alpha \}$ and $O_{R, s}$ is generated by the seven elements $\{ x_{\alpha, O} \mid \alpha \in \Phi_+, \{srs, r_{\{s, t\}}, tr \} \not\subseteq \alpha \}$. In $H$ we have $x_{\alpha, G} = x_{\alpha, O}$ for $\alpha \in \{ \alpha_s, \alpha_t, s\alpha_r, sr\alpha_s, t\alpha_r \}$. Thus $H$ is generated by nine elements and we have a bijection between the set of generators of $H$ and the set of roots contained in $\{ \alpha \in \Phi_+ \mid C_{-1} \not\subseteq \alpha \}$. For $w\in C_r, w' \in D_r$ we have canonical homomorphisms $U_w, V_{w'} \to G_{\{s, t\}} \to H$. For $w\in C_{-1} \backslash C_r, w' \in D_{-1} \backslash D_r$ we have canonical homomorphisms $U_w, V_{w'} \to O_{R, s} \to H$. The universal property of direct limits yields a unique homomorphism $G_{-1} \to H$ extending the identities $U_w \to U_w \leq H$ and $V_{w'} \to V_{w'} \leq H$.
	
	Note that the concatenations of $H \to G_{-1}$ and $G_{-1} \to H$ fix all generators and hence they must be the identities. In particular, $H \to G_{-1}$ is an isomorphism.
\end{proof}

\begin{lemma}\label{OtoG0}
	For $R := R_{\{s, t\}}(r)$ the canonical homomorphisms $V_R, V_{R, s} \to G_{-1}$ are injective. For $D_R := G_{-1} \star_{V_R} O_R$ we obtain $D_R \cong G_{-1} \star_{V_{R, s}} O_{R, s}$. Moreover, we have $G_0 \cong \star_{G_{-1}} D_T$, where $T$ runs over $\mathcal{R}_1$.
\end{lemma}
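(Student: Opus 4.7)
My proposed proof has three parts, paralleling the structure of Lemma~\ref{OtoG-1}.

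First, I would establish the injectivity of $V_R, V_{R,s} \to G_{-1}$. By Proposition~\ref{treeproducts} we have $V_R \hookrightarrow V_{R,s}$, so it suffices to prove that $V_{R,s} \to G_{-1}$ is injective. Following the template of Lemma~\ref{Lemma: G1 to mathcalG homomorphism}, one first constructs the canonical homomorphism $G_{-1} \to \mathcal{G}$ into the split Kac--Moody group by verifying in $\mathcal{G}$ the defining relations of each $U_w$ (for $w \in C_{-1}$) and each $V_{w'}$ (for $w' \in D_{-1}$); this is just a restriction of the verification done there. Injectivity of the composite $V_{R,s} \to G_{-1} \to \mathcal{G}$ would then force injectivity of $V_{R,s} \to G_{-1}$. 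To obtain the former, I would re-execute the normal form argument of Lemma~\ref{Lemma: Normal form} in the twin building $\Delta_-$, now starting from the residue $R = R_{\{s,t\}}(r)$ rather than $R_{\{s,t\}}(1_W)$; the geometric input supplied by Lemma~\ref{Lemma: Uplus acts on Deltaminus} depends only on the local chamber structure of a rank-$2$ residue and its neighbouring panels, and so adapts verbatim after translating the base chamber.

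Next, the identity $D_R \cong G_{-1} \star_{V_{R,s}} O_{R,s}$ is purely formal. Combining Lemma~\ref{Lemma: V_R,s to O_R,s injective} (which gives $O_{R,s} \cong V_{R,s} \star_{V_R} O_R$) with Proposition~\ref{treeproducts} and Remark~\ref{Remark: isomorphism preserves amalgamated product} yields
\[
    G_{-1} \star_{V_{R,s}} O_{R,s}
    \;\cong\; G_{-1} \star_{V_{R,s}} (V_{R,s} \star_{V_R} O_R)
    \;\cong\; G_{-1} \star_{V_R} O_R
    \;=\; D_R.
\]

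Finally, for the decomposition $G_0 \cong \star_{G_{-1}} D_T$, I would construct mutual inverse homomorphisms via universal properties. To produce $\star_{G_{-1}} D_T \to G_0$: for each $T \in \mathcal{R}_1$, the three vertex groups of $O_T$, namely $V_{w_T s r_{\{r,t\}}}, U_{w_T r_{\{s,t\}}}, V_{w_T t r_{\{r,s\}}}$, all appear in the inductive system defining $G_0$; indeed the six parameters $w_T s r_{\{r,t\}}$ and $w_T t r_{\{r,s\}}$, as $T$ ranges over $\mathcal{R}_1$, exhaust $D_0 \setminus D_{-1}$ (cf.\ Remark~\ref{Remark: generating set of G_0}). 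Hence each $O_T \to G_0$ is canonical, agrees with $G_{-1} \to G_0$ on $V_T$, and yields $D_T \to G_0$; as the three such maps coincide on $G_{-1}$, the universal property of the tree product delivers $\star_{G_{-1}} D_T \to G_0$. Conversely, the universal property of the direct limit defining $G_0$ requires compatible maps $U_w, V_{w'} \to \star_{G_{-1}} D_T$ for $w \in C_0$ and $w' \in D_0$: route through $G_{-1}$ whenever $w \in C_{-1}$ or $w' \in D_{-1}$, and through the appropriate $O_T \leq D_T$ otherwise. The two compositions are then checked to be the identity on the generating set described in Remark~\ref{Remark: G_i is generated by x_alpha}.

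The main obstacle is the injectivity step: Theorem~\ref{theoremsubgroupKM} establishes injectivity of $V_{R,s} \to G$ only for $R = R_{\{s,t\}}(1_W)$, so we cannot quote it directly for $R = R_{\{s,t\}}(r)$ and must re-execute the normal form analysis of Section~\ref{Section: Action on building} for the shifted residue. Once this is in hand, the remaining statements reduce to formal manipulations in the calculus of trees of groups.
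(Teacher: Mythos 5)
Your second and third steps match the paper's proof essentially exactly: the chain of isomorphisms $G_{-1} \star_{V_R} O_R \cong G_{-1} \star_{V_{R,s}} O_{R,s}$ is the same formal manipulation, and the mutual-inverse universal-property argument for $G_0 \cong \star_{G_{-1}} D_T$ is the same strategy (though the paper also needs Lemma \ref{mingallinrep} to verify that the generators $x_{\beta, T}$ for distinct $T \in \mathcal{R}_1$ correspond to distinct roots, a detail you elide).

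The injectivity step is where you diverge, and this is where your proposal has a real gap. You propose to re-execute the twin-building analysis of Section \ref{Section: Action on building} for the translated residue $R = R_{\{s,t\}}(r)$, asserting that Lemmas \ref{Lemma: Uplus acts on Deltaminus} and \ref{Lemma: Normal form} ``adapt verbatim after translating the base chamber.'' This is not substantiated and is doubtful as stated: the group appearing in the translated problem is $V_{r r_{\{s,t\}}} = \langle U_{r\alpha_s}, U_{r\alpha_t} \rangle$ rather than $\langle u_s, u_t \rangle$, the relevant rank-$2$ residue of $\Delta_-$ no longer contains the base chamber $c = c_-$ (which is opposite $c_+$), and the length/projection estimates in Lemma \ref{Lemma: Uplus acts on Deltaminus} are computed explicitly in coordinates tied to $c$, so the ``translation'' is a genuinely new computation, not a verbatim copy. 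The paper avoids all of this with a short algebraic argument: for $T := R_{\{r,s\}}(1_W)$ one has
\[
O_{T,r} = V_{s r_{\{r,t\}}} \hat{\star}\, U_{r_{\{r,s\}}} \hat{\star}\, V_{r r_{\{s,t\}}} \hat{\star}\, U_{rtr}, \qquad V_{R,s} = U_{r_{\{r,s\}}} \hat{\star}\, V_{r r_{\{s,t\}}} \hat{\star}\, U_{rtr},
\]
so $V_{R,s}$ is literally a subtree of $O_{T,r}$ and Proposition \ref{treeproducts} gives $V_{R,s} \hookrightarrow O_{T,r}$; then Lemma \ref{OtoG-1} gives $O_{T,r} \hookrightarrow G_{-1}$, and the whole chain $V_R \to V_{R,s} \to O_{T,r} \to G_{-1}$ is injective without touching the building. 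You should replace your geometric sketch with this observation (or, at minimum, actually carry out the translated geometric estimates rather than assert they are automatic).
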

\begin{proof}
	To show that we have canonical homomorphisms $V_R, V_{R, s} \to G_{-1}$ it suffices to check that $rsrs, rtr \in C_{-1}$. But this holds by definition.
	
	Let $T := R_{\{r, s\}}(1_W)$. By definition we have $O_{T, r} = V_{sr_{\{r, t\}}} \hat{\star} U_{r_{\{r, s\}}} \hat{\star} V_{rr_{\{s, t\}}} \hat{\star} U_{rtr}$ and $V_{R, s} = U_{r_{\{r, s\}}} \hat{\star} V_{rr_{\{s, t\}}} \hat{\star} U_{rtr}$. Using Proposition \ref{treeproducts} we obtain that $V_{R, s} \to O_{T, r}$ is injective. Using Lemma \ref{Lemma: V_R,s to O_R,s injective} and Lemma \ref{OtoG-1}, we obtain that each of the canonical homomorphisms $V_R \to V_{R, s} \to O_{T, r} \to G_{-1}$ is injective. Using Proposition \ref{treeproducts}, Remark \ref{Remark: isomorphism preserves amalgamated product}, Lemma \ref{folding}, Lemma \ref{VRtoORinjective} and Lemma \ref{Lemma: V_R,s to O_R,s injective} we obtain the following isomorphisms:
	\allowdisplaybreaks
	\begin{align*}
		G_{-1} \star_{V_R} O_R &\cong G_{-1} \star_{V_{R, s}} V_{R, s} \star_{V_R} O_R \cong G_{-1} \star_{V_{R, s}} \left( V_{R, s} \star_{V_R} O_R \right) \cong G_{-1} \star_{V_{R, s}} O_{R, s}
	\end{align*}
	
	It remains to show that $G_0 \cong \star_{G_{-1}} D_T$ holds, where $T$ runs over $\mathcal{R}_1$. Let $R \in \mathcal{R}_1$ be of type $\{s, t\}$. To see that we have a canonical homomorphism $O_R \to G_0$, it suffices to show that $rsr, rr_{\{s, t\}}, rtr \in C_0$. But this holds by definition. Using Remark \ref{Remark: G_i is generated by x_alpha} and Remark \ref{Remark: generating set of G_0}, we obtain a canonical homomorphism $\star_{G_{-1}} D_T \to G_0$, where $T$ runs over $\mathcal{R}_1$. Note that $\star_{G_{-1}} D_T$ is generated by the elements $x_{\alpha}, x_{\beta, T}$, where $C_{-1} \not\subseteq \alpha \in \Phi_+$ and $T \in \mathcal{R}_1$ is such that $T = R_{\{s, t\}}(r)$ and $\{ rsr, rr_{\{s, t\}}, rtr \} \not\subseteq \beta \in \Phi_+$. Note that if $\{ rsr, rtr \} \not\subseteq \beta \in \Phi_+$, then $x_{\beta} = x_{\beta, T}$ holds in $\star_{G_{-1}} D_T$. Thus $\star_{G_{-1}} D_T$ is generated by the elements $x_{\alpha}, x_{\beta, T}$, where $C_{-1} \not\subseteq \alpha \in \Phi_+$ and $C_{-1} \subseteq \beta$. Using Lemma \ref{mingallinrep} one can show that if $x_{\beta, T}$ and $x_{\beta', T'}$ are two such generators, then $T\neq T'$ implies $\beta \neq \beta'$. Thus the group $\star_{G_{-1}} D_T$, where $T$ runs over $\mathcal{R}_1$, is generated by $15$ elements and there is a bijection between the generators of $\star_{G_{-1}} D_T$ and the set $\{ \alpha \in \Phi_+ \mid C_0 \not\subseteq \alpha \}$. Hence the mappings $x_{\alpha} \mapsto x_{\alpha}$ extend to homomorphisms $U_w, V_{w'} \to \star_{G_{-1}} D_T$ for $w\in C_0$ and $w' \in D_0$. Note that the following diagrams commute:
	\begin{center}
		\begin{tikzcd}
			U_w \arrow[r] \arrow[rd] & U_{ws} \arrow[d] & \\
			&\star_{G_{-1}} D_T
		\end{tikzcd}
		\begin{tikzcd}
			U_{w_R s} \arrow[r] \arrow[rd] & V_{w_R r_{\{s, t\}}} \arrow[d] \\
			& \star_{G_{-1}} D_T
		\end{tikzcd}
	\end{center}
	The universal property of direct limits yields a unique homomorphism $G_0 \to \star_{G_{-1}} D_T$ extending $U_w, V_{w'} \to \star_{G_{-1}} D_T$. As the concatenations of $\star_{G_{-1}} D_T \to G_0$ and $G_0 \to \star_{G_{-1}} D_T$ fix $x_{\alpha}$, both concatenations are the identities and hence both homomorphisms are isomorphisms inverse to each other.
\end{proof}

\begin{lemma}\label{Lemma: K_Rs cap G_-1}
	Let $R \in \mathcal{T}_{0, 1}$ be a residue of type $J$. For $s\in J$ the canonical homomorphism $K_{R, s} \to D_{R_{\{r, t\}}(s)}$ is injective and we have $K_{R, s} \cap G_{-1} = O_R$ in $D_{R_{\{r, t\}}(s)}$.
\end{lemma}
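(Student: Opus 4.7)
Write $J = \{s, t\}$ and set $T := R_{\{r, t\}}(s)$; since $w_R = 1_W$ we have $w_T = s$, so $T \in \mathcal{T}_{1, 1}$ (one checks $\ell(srs) = 3 = \ell(sts)$), whence $D_T = G_{-1} \star_{V_T} O_T$ is defined as in Lemma \ref{OtoG0}. The hypothesis $\ell(w_R s r s) = \ell(w_R) + 3 = 3$ of Lemma \ref{CleftCrightisos} is satisfied, and that lemma provides the key input: an injection $K_{R, s} \hookrightarrow O_{R, s} \star_{V_T} O_T$ together with the identification $K_{R, s} \cap O_{R, s} = O_R$ inside that amalgamated product.

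The crux of the argument is to factor the canonical map $K_{R, s} \to D_T$ through $O_{R, s} \star_{V_T} O_T$ and to show the resulting comparison
\[
O_{R, s} \star_{V_T} O_T \longrightarrow G_{-1} \star_{V_T} O_T = D_T
\]
is injective. I would obtain this by applying Proposition \ref{treeofgroupsinjective} to the pair of two-vertex trees of groups $(G_{-1}, O_T; V_T)$ and $(O_{R, s}, O_T; V_T)$. The vertex containment $O_{R, s} \hookrightarrow G_{-1}$ is furnished by Lemma \ref{OtoG-1} (via the identification $G_{-1} \cong G_{\{s, t\}} \star_{V_{R, s}} O_{R, s}$), while the other vertex and the edge group are unchanged. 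The preimage hypothesis then reduces to the containment $V_T \leq O_{R, s}$ inside $G_{-1}$, which follows from the injectivity of $V_T \to O_{R, s}$ established in Lemma \ref{CleftCrightisos}, once one confirms that every relevant inclusion sends a generator $u_{\alpha}$ to the corresponding generator.

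Granted this, composing the embedding of Lemma \ref{CleftCrightisos} with the embedding just produced yields injectivity of $K_{R, s} \to D_T$. For the intersection, the ``in particular'' clause of Proposition \ref{treeofgroupsinjective} applied to the same two trees of groups gives $(O_{R, s} \star_{V_T} O_T) \cap G_{-1} = O_{R, s}$ inside $D_T$; since $K_{R, s}$ lies inside $O_{R, s} \star_{V_T} O_T$, this yields
\[
K_{R, s} \cap G_{-1} \;=\; K_{R, s} \cap (O_{R, s} \star_{V_T} O_T) \cap G_{-1} \;=\; K_{R, s} \cap O_{R, s} \;=\; O_R,
\]
the last equality being Lemma \ref{CleftCrightisos}.

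The only real technical point is not the Bass--Serre bookkeeping but the compatibility check underlying it: that the two paths $V_T \hookrightarrow O_{R, s} \hookrightarrow G_{-1}$ and $V_T \hookrightarrow G_{-1}$ (from Lemma \ref{OtoG0}) coincide, so that the induced map of amalgamations is well-defined and agrees with the canonical one. This reduces to verifying that every embedding in sight sends each generator $u_\alpha$ to the corresponding generator, which is routine from the explicit descriptions of the vertex groups of $O_R$, $O_{R, s}$, $V_T$ and of $G_{-1}$ as the direct limit in Definition \ref{Definition: Gi}.
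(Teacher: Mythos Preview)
Your argument is correct, but it is organized differently from the paper's proof. The paper does not pass through the intermediate group $O_{R,s}\star_{V_T}O_T$; instead it exhibits $K_{R,s}$ directly as a two-vertex tree product $X\star_Y O_R$, where $X=U_{sr_{\{r,t\}}}\hat\star V_{str_{\{r,s\}}}$ and $Y=V_{sr_{\{r,t\}}}\hat\star U_{sts}$, checks via Corollary~\ref{intersectionwithasubtree} that $X\cap V_T=Y=O_R\cap V_T$ (computed in $O_T$ and in $O_{R,s}$ respectively), and then applies Proposition~\ref{treeofgroupsinjective} once to the inclusion $(X,O_R;Y)\hookrightarrow(O_T,G_{-1};V_T)$ to obtain both injectivity of $K_{R,s}\to D_T$ and $K_{R,s}\cap G_{-1}=O_R$ in one stroke. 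Your route instead reuses the already-packaged conclusion of Lemma~\ref{CleftCrightisos} (the embedding $K_{R,s}\hookrightarrow O_{R,s}\star_{V_T}O_T$ and the intersection $K_{R,s}\cap O_{R,s}=O_R$) and then applies Proposition~\ref{treeofgroupsinjective} to $(O_{R,s},O_T;V_T)\hookrightarrow(G_{-1},O_T;V_T)$; this is precisely the strategy the paper employs in the proof of Corollary~\ref{Corollary: Main application} for the analogous $\mathcal{R}_1$ situation, so your approach has the virtue of treating both cases uniformly and avoids the auxiliary groups $X,Y$. The paper's approach, by contrast, bypasses the second half of Lemma~\ref{CleftCrightisos} entirely at the cost of two explicit subtree-intersection computations.
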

\begin{proof}
	We suppose $J = \{s, t\}$. Note that $R = R_J(1_W)$. Since $O_{R, s} \cong U_{srs} \star_{U_{sr}} O_R$ by Lemma \ref{Lemma: V_R,s to O_R,s injective}, we obtain that both homomorphisms $O_R \to O_{R, s} \to G_{-1}$ are injective by Lemma \ref{OtoG-1} and Proposition \ref{treeproducts}. For $T := R_{\{r, t\}}(s)$ we have that $X := U_{sr_{\{r, t\}}} \hat{\star} V_{str_{\{r, s\}}} \to O_T$ is injective by Proposition \ref{treeproducts}. Using Corollary \ref{intersectionwithasubtree} we obtain $X \cap V_T = V_{sr_{\{r, t\}}} \hat{\star} U_{sts}$ in $O_T$. Note that $V_T \to O_{R, s}$ is injective by Lemma \ref{CleftCrightisos}. Recall that
	\allowdisplaybreaks
	\begin{align*}
		&O_{R, s} = U_{srs} \hat{\star} V_{sr_{\{r, t\}}} \hat{\star} U_{r_{\{s, t\}}} \hat{\star} V_{tr_{\{r, s\}}} &&\text{and} &V_T = U_{srs} \hat{\star} V_{sr_{\{r, t\}}} \hat{\star} U_{sts}.
	\end{align*}
	As $O_R$ corresponds to the last three vertex groups and $V_T$ is a subgroup of the first three vertex groups of $O_{R, s}$, Corollary \ref{intersectionwithasubtree} implies that $O_R \cap V_T = V_{sr_{\{r, t\}}} \hat{\star} U_{sts}$ in $O_{R, s}$. We define $Y := V_{sr_{\{r, t\}}} \hat{\star} U_{sts}$. Applying Proposition \ref{treeofgroupsinjective}, the canonical homomorphism $X \star_Y O_R \to O_T \star_{V_T} G_{-1} = D_T$ is injective. In particular, Proposition \ref{treeproducts}, Remark \ref{Remark: isomorphism preserves amalgamated product} and Lemma \ref{folding} yield
	\allowdisplaybreaks
	\begin{align*}
		X \star_Y O_R \cong X \star_Y \left( Y \star_{U_{sts}} U_{r_{\{s, t\}}} \hat{\star} V_{tr_{\{r, s\}}} \right) \cong U_{sr_{\{r, t\}}} \hat{\star} V_{str_{\{r, s\}}} \hat{\star} U_{r_{\{s, t\}}} \hat{\star} V_{tr_{\{r, s\}}} = K_{R, s}.
	\end{align*}
	This implies that $K_{R, s} \cong X \star_Y O_R \to O_T \star_{V_T} G_{-1} = D_T$ is injective. Applying Proposition \ref{treeofgroupsinjective}, we obtain $K_{R, s} \cap G_{-1} = O_R$ in $D_T$. This finishes the claim.
\end{proof}

\begin{theorem}\label{Theorem: Main application}
	For every $R \in \mathcal{T}_{0, 1}$ we have a canonical homomorphism $H_R \to G_0$. Moreover, this homomorphism is injective.
\end{theorem}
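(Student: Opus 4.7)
The plan is to recognize both $H_R$ and $G_0$ as tree products that fit into the framework of Proposition~\ref{treeofgroupsinjective}. For existence of the canonical homomorphism, I would first note that since $R \in \mathcal{T}_{0,1}$ forces $w_R = 1_W$, each of the five vertex groups of $H_R$ appears among the building blocks of $G_0$: the subscripts $sr_{\{r,t\}}, r_{\{s,t\}}, tr_{\{r,s\}}$ lie in $C_0$, and $str_{\{r,s\}}, tsr_{\{r,t\}}$ lie in $D_0$ (cf.\ Remark~\ref{Remark: generating set of G_0}). The edge groups match compatibly, so the universal property defining $H_R$ yields the canonical homomorphism $H_R \to G_0$.

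For injectivity, say $R$ is of type $\{s, t\}$ with $S = \{r, s, t\}$, and set $T_s := R_{\{r,t\}}(s)$ and $T_t := R_{\{r,s\}}(t)$. By Lemma~\ref{CCleftCright} we have $H_R \cong K_{R, s} \star_{O_R} K_{R, t}$. By Lemma~\ref{OtoG0}, $G_0$ is the tree product with central vertex group $G_{-1}$ and three leaves $D_T$ for $T \in \mathcal{R}_1$, each edge carrying edge group $G_{-1}$. I would then consider the subtree of this star spanned by $D_{T_s}, G_{-1}, D_{T_t}$, and the corresponding tree of subgroups given by $K_{R, s} \leq D_{T_s}$, $O_R \leq G_{-1}$, $K_{R, t} \leq D_{T_t}$, with edge groups both equal to $O_R$.

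The crux of the argument is then the verification of condition~(ii) of Proposition~\ref{treeofgroevenroupsinjective} on each of the two edges. For the edge between $G_{-1}$ and $D_{T_s}$ this amounts to the identity $O_R = G_{-1} \cap K_{R, s}$ inside $D_{T_s}$, which is precisely the conclusion of Lemma~\ref{Lemma: K_Rs cap G_-1}; the other edge is symmetric. Proposition~\ref{treeofgroupsinjective} then yields injectivity of the canonical map $K_{R, s} \star_{O_R} O_R \star_{O_R} K_{R, t} \to G_0$, and Proposition~\ref{treeproducts} (contracting the middle segment) collapses the domain to $K_{R, s} \star_{O_R} K_{R, t} \cong H_R$.

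The main technical obstacle has already been overcome in Lemma~\ref{Lemma: K_Rs cap G_-1}; given that lemma together with the decompositions provided by Lemma~\ref{CCleftCright} and Lemma~\ref{OtoG0}, the present theorem reduces to an essentially formal application of Proposition~\ref{treeofgroupsinjective} to a three-vertex subtree of the star defining $G_0$, so I do not anticipate further difficulties beyond invoking the cited lemmas in the correct order.
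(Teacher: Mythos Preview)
Your proposal is correct and follows essentially the same approach as the paper: decompose $H_R$ as $K_{R,s}\star_{O_R}K_{R,t}$ via Lemma~\ref{CCleftCright}, decompose $G_0$ via Lemma~\ref{OtoG0}, and then invoke Lemma~\ref{Lemma: K_Rs cap G_-1} together with Proposition~\ref{treeofgroupsinjective}. The only cosmetic difference is that the paper works with the two-vertex segment $D_{T_s}\star_{G_{-1}}D_{T_t}$ (first embedding it into $G_0$ via Proposition~\ref{treeproducts}) rather than your three-vertex path with $G_{-1}$ as the middle vertex, but after contracting the middle edge these are the same argument.
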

\begin{proof}
	Let $R \in \mathcal{T}_{0, 1}$ be of type $\{s, t\}$, i.e.\ $R = R_{\{s, t\}}(1_W)$. We first show that we have a homomorphism as claimed. By Remark \ref{Remark: G_i is generated by x_alpha} it suffices to show that $C_0$ contains the elements $sr_{\{r, t\}}, r_{\{s, t\}}, tr_{\{r, s\}}$. But this holds by definition.
	
	Next we show that this homomorphism is injective. Lemma \ref{CCleftCright} implies that $H_R \cong K_{R, s} \star_{O_R} K_{R, t}$. Thus it suffices to show that $K_{R, s} \star_{O_R} K_{R, t} \to G_0$ is injective. By Proposition \ref{treeproducts} and Lemma \ref{OtoG0} it follows that $D_{R_{\{r, t\}}(s)} \star_{G_{-1}} D_{R_{\{r, s\}}(t)} \to G_0$ is injective. Using Lemma \ref{Lemma: K_Rs cap G_-1} we obtain that $K_{R, s} \to D_{R_{\{r, t\}}(s)}$ and $K_{R, t} \to D_{R_{\{r, s\}}(t)}$ are injective and that $K_{R, s} \cap G_{-1} = O_R$ (resp.\ $K_{R, t} \cap G_{-1} = O_R$) in $D_{R_{\{r, t\}}(s)}$ (resp.\ $D_{R_{\{r, s\}}(t)}$). Now Proposition \ref{treeofgroupsinjective} implies that the following homomorphism is injective:
	\[ K_{R, s} \star_{O_R} K_{R, t} \to D_{R_{\{r, t\}}(s)} \star_{G_{-1}} D_{R_{\{r, s\}}(t)} \to G_0. \qedhere \]
\end{proof}

\begin{remark}
	Define $T = R_{\{r, t\}}(rs)$ and $T' = R_{\{r, s\}}(rt)$. In the following corollary we consider the group $\left( G_0 \star_{V_T} O_T \right) \star_{G_0} \left( G_0 \star_{V_{T'}} O_{T'} \right)$. Then $G_0 \star_{V_T} O_T$ is generated by $\{ G_0, x_{rsr\alpha_t, T}, x_{rst\alpha_r, T} \}$ and $G_0 \star_{V_{T'}} O_{T'}$ is generated by $\{ G_0, x_{rtr\alpha_s, T'}, x_{rts\alpha_r, T'} \}$. We first note that $C_0 \subseteq \beta$ for all $\beta \in \{ x_{rsr\alpha_t, T}, x_{rst\alpha_r, T}, x_{rtr\alpha_s, T'}, x_{rts\alpha_r, T'} \}$. As in the case of $H_{R_{\{s, t\}}(r)}$ one cas show that $\vert \{ rsr\alpha_t, rst \alpha_r, rtr\alpha_s, rts\alpha_r \} \vert = 4$. Thus the group $\left( G_0 \star_{V_T} O_T \right) \star_{G_0} \left( G_0 \star_{V_{T'}} O_{T'} \right)$ is generated by suitable $u_{\alpha}$.
\end{remark}

\begin{corollary}\label{Corollary: Main application}
	Define $R = R_{\{s, t\}}(r)$, $T = R_{\{r, t\}}(rs)$ and $T' = R_{\{r, s\}}(rt)$. Then $V_T, V_{T'} \to G_0$ are injective. Moreover, the canonical mapping $H_R \to \left( G_0 \star_{V_T} O_T \right) \star_{G_0} \left( G_0 \star_{V_{T'}} O_{T'} \right)$ is injective.
\end{corollary}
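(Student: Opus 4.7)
The plan is to first establish that $V_T, V_{T'} \to G_0$ are injective, and then to deduce the main statement by lifting the strategy of Lemma \ref{Lemma: K_Rs cap G_-1} one level up, using $G_0 \star_{V_T} O_T$ in place of $D_T$.

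For the first step, I observe that, after an appropriate relabeling of the three generators in the statement of Lemma \ref{Lemma: J_R,t cong H_R star V_T O_T}, the residue $T = R_{\{r, t\}}(rs)$ is precisely the residue produced from $R^* = R_{\{r, s\}}(1_W) \in \mathcal{T}_{0, 1}$, and one obtains a canonical injection $V_T \hookrightarrow H_{R^*}$. Composing with the injection $H_{R^*} \hookrightarrow G_0$ supplied by Theorem \ref{Theorem: Main application} then yields $V_T \hookrightarrow G_0$. The same argument applied to $R^{**} = R_{\{r, t\}}(1_W) \in \mathcal{T}_{0, 1}$ gives $V_{T'} \hookrightarrow G_0$.

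For the main statement, note that $R = R_{\{s, t\}}(r) \in \mathcal{T}_{1, 1}$ has $w_R = r$ and satisfies $\ell(rsrs) = 4 = \ell(w_R) + 3$, so Lemma \ref{CleftCrightisos} supplies canonical injections $V_T \hookrightarrow O_{R, s}$ and $K_{R, s} \hookrightarrow O_{R, s} \star_{V_T} O_T$ together with the equality $K_{R, s} \cap O_{R, s} = O_R$. The isomorphisms $D_R \cong G_{-1} \star_{V_{R, s}} O_{R, s}$ and $G_0 \cong \star_{G_{-1}} D_U$ from Lemma \ref{OtoG0} yield a canonical injection $O_{R, s} \hookrightarrow G_0$. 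Proposition \ref{treeofgroupsinjective}, applied to the segment with vertex groups $O_{R, s} \leq G_0$ and $O_T = O_T$ joined by the common edge group $V_T$, then shows $O_{R, s} \star_{V_T} O_T \hookrightarrow G_0 \star_{V_T} O_T$ with image meeting $G_0$ in $O_{R, s}$. Pulling back through $K_{R, s} \hookrightarrow O_{R, s} \star_{V_T} O_T$ gives $K_{R, s} \hookrightarrow G_0 \star_{V_T} O_T$ with $K_{R, s} \cap G_0 = K_{R, s} \cap O_{R, s} = O_R$. The symmetric argument, with the roles of $s$ and $t$ exchanged, yields $K_{R, t} \hookrightarrow G_0 \star_{V_{T'}} O_{T'}$ with $K_{R, t} \cap G_0 = O_R$.

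To conclude, I view the target $(G_0 \star_{V_T} O_T) \star_{G_0} (G_0 \star_{V_{T'}} O_{T'})$ as the tree product of the path whose vertex groups are $G_0 \star_{V_T} O_T$, $G_0$, $G_0 \star_{V_{T'}} O_{T'}$ and whose edge groups are both $G_0$, and compare it with the subtree whose vertex groups are $K_{R, s}, O_R, K_{R, t}$ and whose edge groups are both $O_R$. The computations above verify exactly the hypotheses of Proposition \ref{treeofgroupsinjective} (on each edge the relevant preimage in $G_0$ collapses to $O_R$), so the induced homomorphism $K_{R, s} \star_{O_R} K_{R, t} \to (G_0 \star_{V_T} O_T) \star_{G_0} (G_0 \star_{V_{T'}} O_{T'})$ is injective, and Lemma \ref{CCleftCright}'s isomorphism $H_R \cong K_{R, s} \star_{O_R} K_{R, t}$ then finishes the proof. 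The main technical point to watch is the bookkeeping for the permutations of $\{r, s, t\}$ when invoking Lemmas \ref{Lemma: J_R,t cong H_R star V_T O_T} and \ref{CleftCrightisos}, since the letter conventions of those lemmas do not directly match the fixed choices of $R$, $T$, and $T'$ here.
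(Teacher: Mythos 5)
Your proposal is correct and follows the paper's own argument essentially line for line: both use Theorem \ref{Theorem: Main application} together with Lemma \ref{Lemma: J_R,t cong H_R star V_T O_T} (applied to $R_{\{r,s\}}(1_W)$, resp.\ $R_{\{r,t\}}(1_W)$) to get $V_T, V_{T'} \hookrightarrow G_0$, then combine Lemma \ref{CleftCrightisos}, Lemma \ref{OtoG0}, and Proposition \ref{treeofgroupsinjective} to establish $K_{R,s} \hookrightarrow G_0 \star_{V_T} O_T$ with $K_{R,s} \cap G_0 = O_R$ (and symmetrically for $K_{R,t}$), and finish via Lemma \ref{CCleftCright} and Proposition \ref{treeofgroupsinjective}. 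The only cosmetic difference is that you phrase the final step as a three-vertex path rather than a segment, but by Lemma \ref{folding} these give the same tree product.
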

\begin{proof}
	Lemma \ref{CCleftCright} implies that $H_R \cong K_{R, s} \star_{O_R} K_{R, t}$. Thus it suffices to show that $K_{R, s} \star_{O_R} K_{R, t} \to \left( G_0 \star_{V_T} O_T \right) \star_{G_0} \left( G_0 \star_{V_{T'}} O_{T'} \right)$ is injective. We abbreviate $Z = R_{\{r, s\}}(1_W)$. By Theorem \ref{Theorem: Main application} the mapping $H_Z \to G_0$ is injective and, hence $V_T \to G_0$ is injective by Lemma \ref{Lemma: J_R,t cong H_R star V_T O_T}. By Lemma \ref{CleftCrightisos} the mappings $V_T \to O_{R, s}$ and, in particular, $K_{R, s} \to O_{R, s} \star_{V_T} O_T$ are injective. Lemma \ref{OtoG0} implies that the canonical homomorphisms $V_T \to O_{R, s} \to G_0$ are injective. Using Proposition \ref{treeofgroupsinjective} the homomorphism $O_{R, s} \star_{V_T} O_T \to G_0 \star_{V_T} O_T$ is injective. Using Proposition \ref{treeofgroupsinjective} again, we deduce $\left( O_{R, s} \star_{V_T} O_T \right) \cap G_0 = O_{R, s}$ in $G_0 \star_{V_T} O_T$ and hence $K_{R, s} \cap G_0 \leq O_{R, s}$ in $G_0 \star_{V_T} O_T$. By Lemma \ref{CleftCrightisos} we have $K_{R, s} \cap O_{R, s} = O_R$ in $O_{R, s} \star_{V_T} O_T$ and, hence, in $G_0 \star_{V_T} O_T$. Thus we obtain the following in $G_0 \star_{V_T} O_T$:
	\allowdisplaybreaks
	\begin{align*}
		K_{R, s} \cap G_0 = K_{R, s} \cap G_0 \cap O_{R, s} = K_{R, s} \cap O_{R, s} = O_R
	\end{align*}
	Replacing $s$ and $t$, we deduce that the homomorphisms $K_{R, t} \to O_{R, t} \star_{V_{T'}} O_{T'} \to G_0 \star_{V_{T'}} O_{T'}$ are injective and $K_{R, t} \cap G_0 = K_{R, t} \cap O_{R, t} = O_R$. Now Proposition \ref{treeofgroupsinjective} yields that $K_{R, s} \star_{O_R} K_{R, t} \to \left( G_0 \star_{V_T} O_T \right) \star_{G_0} \left( G_0 \star_{V_{T'}} O_{T'} \right)$ is injective. 
\end{proof}

\bibliographystyle{amsalpha}
\bibliography{../../../../References/references}

\end{document}